\newtheorem{theorem}{\indent Theorem}[section]
\newtheorem{lemma}{\indent Lemma}[section]\rm
\newtheorem{proposition}{\indent
Proposition}[section]\rm
\journal{ }
\begin{document}

\begin{frontmatter}



\title{The motion of closed hypersurfaces in the central force fields}


\author{Weiping Yan\corref{cor1}}
\cortext[cor1]{Corresponding author} \ead{mathyanwp@126.com;yanwp@xmu.edu.cn}
\address{College of Mathematics, Xiamen University, Xiamen 361000, P.R. China}

\begin{abstract}
This paper studies the large time existence for the motion of closed
hypersurfaces in a radially symmetric potential. In physical, this surface can
be considered as an electrically charged membrane with a constant charge
per area in a radially symmetric potential. The evolution of such surface has
been investigated by Schn\"{u}rer and Smoczyk (Evolution of hypersurfaces in
central force fields, J. Reine Angew. Math. 550 (2002), 77-95). To study
its motion, we introduce a quasi-linear degenerate hyperbolic equation which
describes the motion of the surfaces extrinsically. Our main results show that
the large time existence of such Cauchy problem and the stability with respect
to small initial data. When the radially symmetric potential function $v\equiv1$,
the local existence and stability results have been obtained by Notz (Closed
Hypersurfaces driven by mean curvature and inner pressure, Comm. Pure
Appl. Math. 66(5) (2013), 790-819). The proof is based on a new Nash-Moser
iteration scheme.

MSC: 53C44; 35J05; 35B65; 35B35
\end{abstract}

\begin{keyword}
Hyperbolic mean curvature flow; Quasi-linear wave equation; Smooth solution; Nash-Moser iteration; Stability
\end{keyword}

\end{frontmatter}


\section{Introduction and Main Results}

Let $\Sigma$ be an oriented smooth closed manifold of dimension $n$,  and $(\mathcal{M},\tilde{g})$ be the Euclidean space, $i.e.$ $\mathcal{M}=\textbf{R}^{n+1}$ and $\tilde{g}$ be the Euclidean metric.
Consider a smooth family of immersions $F:[0,T]\times\Sigma\longrightarrow\mathcal{M}$, we define an action integral of the form
\begin{eqnarray*}
\mathcal{S}(F)=\int_0^T\mathcal{K}(F)-\mathcal{V}(F)dt,
\end{eqnarray*}
where $\mathcal{K}$ is the kinetic energy and $\mathcal{V}=V+\mathcal{J}$ is the total inner energy, $V$ is a radially symmetric potential energy and $\mathcal{J}$ is the inner pressure.

We fix a reference measure $d\mu$ on $\Sigma$ with a smooth density function defining a mass distribution on $\Sigma$. Then the total kinetic energy of all the points of the surface is
\begin{eqnarray*}
\mathcal{K}(F)=\frac{1}{2}\int_{\Sigma}|\partial_tF|^2d\mu.
\end{eqnarray*}

We denote $d\mu_t$  as the induced surface measure of the induced metric $g=F(t)^*\tilde{g}$ at time $t$, $\varphi$ denotes a smooth, radially
symmetric function (reflecting the presence of a central force) depending on $s:=\frac{|F|^2}{2}$. The radially symmetric potential energy of the hypersurface
$F(\Sigma)$ is defined by
\begin{eqnarray*}
V(F)=\int_{\Sigma}v(s(F))d\mu_t,
\end{eqnarray*}
where $v$ is defined by
\begin{eqnarray}\label{E1-0}
&&v(s)=\exp(-\frac{n}{2}\int_1^s\frac{\eta(w)}{w}dw),\\
&&\varphi(s)=-\frac{\partial_wv(w)}{v(w)}=\frac{n}{2w}\eta(w)\nonumber,
\end{eqnarray}
where $\eta:\textbf{R}^+\longrightarrow\textbf{R}$ is a smooth function.

The inner pressure is defined as
\begin{eqnarray*}
\mathcal{J}(F)=-\rho\log(\frac{Vol(F)}{Vol_0}),
\end{eqnarray*}
where $\rho>0$ denotes a parameter which determines strength of the influence of the inner pressure compared to the surface tension, $Vol(F)$ denotes the enclosed volume of the surface $F(\Sigma)$. The initial enclosed volume $Vol_0$ as well as the constant $\rho$ are included for scaling reasons. This inner pressure is motivated by that of an ideal gas with constant temperature, i.e. proportional to $Vol^{-1}(F)$. One can see \cite{Notz1,Notz} for more details on the inner pressure.

Then the action integral is
\begin{eqnarray*}
\mathcal{S}(F)=\frac{1}{2}\int_0^T\int_{\Sigma}|\partial_tF|^2d\mu dt-\int_0^T\int_{\Sigma}v(s(F))d\mu_tdt+\rho\int_0^T\log(\frac{Vol(F)}{Vol_0})dt.
\end{eqnarray*}
The Euler-Lagrange equations of functional $\mathcal{S}(F)$ is
\begin{eqnarray}\label{E1-1}
\nabla_{\partial_t}\partial_tF=\frac{d\mu_t}{d\mu}v(F)\left(-H(F)+\varphi F-\varphi\nabla(\frac{|F|^2}{2})+\frac{\rho}{Vol(F)}\right)\nu,
\end{eqnarray}
where $H(F)$ denotes the mean curvature of $F(\Sigma)$, $\nu$ denotes the outer unit normal, $|F|$ denotes the absolute value function of $F$,
$\nabla_{\partial_t}$ denotes the covariant derivative along $F$, i.e.
\begin{eqnarray*}
\nabla_{\partial_t}\partial_tF=\partial_{tt}F^{\alpha}+\tilde{\Gamma}^{\alpha}_{\beta\gamma}(F)\partial_tF^{\beta}\partial_tF^{\gamma},
\end{eqnarray*}
with $\tilde{\Gamma}^{\alpha}_{\beta\gamma}$ being the Christoffel symbols of $\tilde{g}$. Here and in the
sequel, we use the Einstein summation convention.

Another setting of above problem is that closed hypersurfaces moves in Riemannian manifolds with density $v(s)$. To our knowledge, Gromov \cite{Gromov}
first studied the manifolds with densities as ``mm-spaces'', and mentioned the natural generalization of mean curvature in such spaces. Morgan and his collaborators \cite{M1,M2} considered the corresponding mean curvature $H_{\omega}=H-<\overline{\nabla}\omega,\nu>$, where the density is denoted by $e^{\omega}$. When the density in the Euclidean space $\textbf{R}^{n+1}$ is the Gaussian density $(\frac{\gamma}{2\pi})^{\frac{n}{2}}e^{-\frac{\gamma|x|^2}{2}}$, Borisenko and Miquel \cite{Bori} studied a flow of a hypersurface driven by its mean curvature associated to the Gaussian density. It is obviously that the Gaussian density is a special case of the density $v(s)=\exp(-\frac{n}{2}\int_1^s\frac{\eta(w)}{w}dw)$.

The study of mean curvature flow can date back to Brakke \cite{Bra}, who introduced the
motion of a submanifold moving by its mean curvature in arbitrary codimension
and constructed a generalized varifold solution for all time. Huisken \cite{Hui1,Hui2} showed
that the mean curvature flow has much abundant and complicated behaviour, e.g. singularity and asymptotic behaviours.
Schn\"{u}rer and Smoczyk \cite{Sch} studied the asymptotic behaviour of mean curvature flow in central force fields, where they chose the radially symmetric potential as (\ref{E1-0}). For complex geometry, Chen and Tian \cite{Chen1} used the mean curvature flow in a K\"{a}hler-Einstein surface to show the symplectic property being preserved as long as the smooth solution exists. Chen and Li \cite{Chen2} produced holomorphic curves from a given initial symplectic surface by the mean curvature flow method. LeFloch and Smoczyk \cite{Lef} established a hyperbolic mean curvature flow, which is a strickly hyperbolic equation when the tangential part vanishing. Then they obtained the local existence and singularity for such flow.
Meanwhile, He and Kong \cite{He} also introduced a hyperbolic mean curvature flow and established the local existence and nonlinear stability for this kind of flow. After that, a hyperbolic mean curvature flow similar to the well-known Hamilton's Ricci flow was introduced by Kong and his collaborators \cite{Kong1,Kong2}, and they obtained the short time existence and lifespan result. Recently, A new kind of more physical hyperbolic mean curvature flow was established by Notz \cite{Notz}, its motivation is closed hypersurfaces moving driven by mean curvature and inner pressure. Meanwhile, the local existence of smooth solutions and the stability with respect to initial data was obtained.

We notice that the $d\mu$-term of equation (\ref{E1-1}) prevents reparametrization of (\ref{E1-1}) (see \cite{Deturck} for the method of reparametrization) to remove the degeneracy of such a quasilinear equation. Hence a suitable approximation method should be explored.
One of main results in this paper is the large time existence of equation (\ref{E1-1}).
\begin{theorem}
Let $F_0:\Sigma\longrightarrow\mathcal{M}=\textbf{R}^{n+1}$ be any smooth immersion with $Vol(F_0)=Vol_0>0$, and initial velocity $F_1\in\Gamma(F_0^*\textbf{T}M)$. Assume that
the radially symmetric potential function $v$ has the form (\ref{E1-0}), and
\begin{eqnarray}\label{E1-2}
0<|\frac{\eta(w)}{w}|\leq|w|^p,~~0\leq p<\infty.
\end{eqnarray}
Then there is $\epsilon>0$ and a smooth family of immerisions $F:[0,\frac{T}{\sqrt{\varepsilon}}]\longrightarrow\mathcal{M}=\textbf{R}^{n+1}$ solving the Cauchy problem (\ref{E1-1}) with small initial data $F(0,\cdot)=\varepsilon F_0$ and $\partial_tF(0,\cdot)=\varepsilon F_1$. Here $T$ is a positive constant and $\Gamma(F_0^*\textbf{T}M)$ denotes the space of smooth sections in a vector bundle $F_0^*\textbf{T}M$, and $\varepsilon$ is a positive sufficient small parameter.
\end{theorem}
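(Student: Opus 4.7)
The plan is to recast the Cauchy problem as a small-data long-time problem by rescaling, and then to solve it by a Nash--Moser iteration driven by tame energy estimates for the linearisation. Writing $F(t,\cdot)=\varepsilon G(\sqrt\varepsilon\,t,\cdot)$, the time interval $[0,T/\sqrt\varepsilon]$ is converted into the fixed interval $[0,T]$; the quasi-linear equation (\ref{E1-1}) is transformed into a system whose leading part is again of the type $\nabla_{\partial_\tau}\partial_\tau G + \frac{d\mu_\tau}{d\mu}\,v(G)\,H(G)\,\nu$, while the contributions coming from $\varphi\,F$, $\varphi\,\nabla(|F|^2/2)$ and the pressure term acquire positive powers of $\sqrt\varepsilon$. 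The growth hypothesis (\ref{E1-2}) on $\eta$ is precisely what guarantees, after this rescaling, that $v$, $\varphi$ and all their derivatives along $G$ remain uniformly controlled within the range of $G$ accessible during the iteration.

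The first technical step is to establish tame energy estimates for the linearisation of (\ref{E1-1}) at an arbitrary admissible immersion $\bar G$. Because the factor $\frac{d\mu_\tau}{d\mu}$ cannot be removed by a DeTurck-type reparametrisation, the linearised operator $\mathcal{L}(\bar G)$ is only degenerate hyperbolic and one loses a fixed number of derivatives when inverting it. I would therefore prove estimates of schematic form
\begin{eqnarray*}
\|u\|_{H^s}\leq C\bigl(\|\bar G\|_{H^{s_0}}\bigr)\Bigl(\|\mathcal{L}(\bar G)u\|_{H^{s+\sigma}} + \|\bar G\|_{H^{s+\sigma'}}\|\mathcal{L}(\bar G)u\|_{H^{s_0}}\Bigr),
\end{eqnarray*}
with fixed losses $\sigma,\sigma'>0$, using the standard multiplier method on $\Sigma$ adapted to the induced metric $g=F^*\tilde g$, together with commutator estimates for covariant derivatives and interpolation.

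The second step is the Nash--Moser iteration itself. I introduce smoothing operators $S_\theta$ on $C^\infty(\Sigma;\textbf{R}^{n+1})$ satisfying the usual bounds $\|S_\theta u\|_{H^b}\leq C\theta^{(b-a)_+}\|u\|_{H^a}$ and $\|(I-S_\theta)u\|_{H^a}\leq C\theta^{a-b}\|u\|_{H^b}$ for $b\geq a$, set $\theta_k=\theta_0 N^{k}$ for a suitable $N>1$, and define iterates $G_{k+1}=G_k+\delta G_k$ where $\delta G_k$ solves the linearised equation at $S_{\theta_k}G_k$ with right-hand side $S_{\theta_k}e_k$, with $e_k$ the residual of $G_k$ in (\ref{E1-1}). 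Combining the tame estimate above with the smoothing bounds yields quadratic convergence of the form $\|G_{k+1}-G_k\|_{H^s}\leq C\theta_k^{-\alpha}$ for a range of $s$ slightly below the regularity of the initial data, from which convergence to a smooth solution follows.

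The main obstacle, and where most of the technical work sits, is maintaining two bookkeeping conditions uniformly in $k$ on the full rescaled interval $[0,T]$. First, one must ensure that the iterates $G_k$ stay in the regime where $v(G_k)$, $\varphi(G_k)$ and $\frac{d\mu_\tau(G_k)}{d\mu}$ remain uniformly bounded, where $G_k(\Sigma)$ is a smooth immersion, and where $\mathrm{Vol}(G_k)$ is bounded away from zero; here the smallness $\varepsilon$ of the data and the growth condition (\ref{E1-2}) provide the needed margin, and $\sqrt\varepsilon$ factors generated by the rescaling are what produce the lifespan $T/\sqrt\varepsilon$ in the original time. Second, the inductive Nash--Moser inequalities must close, which requires choosing the smoothing rate $N$ and the high/low Sobolev indices consistently with the derivative loss $\sigma$ from the degeneracy of $\mathcal{L}(\bar G)$. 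Once both conditions are verified, persistence of regularity for the linearised equation upgrades the $H^s$-limit to a smooth immersion $F$ solving (\ref{E1-1}) on $[0,T/\sqrt\varepsilon]$ with the claimed initial data.
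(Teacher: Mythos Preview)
Your outline matches the paper's strategy: the rescaling $F(t,x)\mapsto\varepsilon F(\sqrt{\varepsilon}\,t,x)$ is exactly the paper's first step, and the proof then proceeds by a Nash--Moser iteration on the rescaled equation, with the growth hypothesis (\ref{E1-2}) used to keep $v$ and $\varphi$ under control along the iterates.

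Two points of difference are worth flagging. First, for the linearised estimates you invoke a generic multiplier argument, but the paper (following Notz) handles the degeneracy by decomposing $h=u^A\nu_A+r^d\tau_d$ into normal and tangential parts, obtaining a coupled system in which the normal components satisfy genuine wave equations while the tangential components satisfy only ODEs; the energy estimates (Lemmas~3.2--3.5) are proved for this specific structure via weighted-in-time inequalities rather than a direct spatial multiplier. This decomposition is the concrete mechanism that makes the degenerate linearisation tractable, and your write-up should make it explicit. Second, the paper's Nash--Moser variant is organised differently from the Hamilton/H\"ormander scheme you describe: instead of tame estimates with loss $\sigma$ on the inverse and increasing smoothing parameters $\theta_k$, the paper obtains a lossless estimate $|||h|||_{s,T}\le c\,|||W|||_{s,T}$ for the linearised inverse (Lemma~4.2), places the derivative loss in the remainder $|||\mathcal R(h)|||_{s,T}\le c\,|||h|||_{s+2,T}^{\,n+1}$ (Lemma~4.1), and compensates via truncations $\Pi_{N_l}$ with $N_l=2^l$ together with a \emph{decreasing} Sobolev scale $s_l=\bar s+(s-\bar s)2^{-l}$. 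The high power $n+1$ in the remainder then yields the super-exponential bound $|||E^{l+1}|||_{s_{l+1}}\le C\bigl(\varepsilon\,|||E^0|||_{s_0}\bigr)^{(n+1)^{l+1}}$, which closes the iteration for $\varepsilon$ small. Your quadratic-convergence scheme would also work, but the paper's version exploits the specific algebraic structure (the $(n+1)$-fold product in $d\mu_t$) to simplify the bookkeeping.
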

We remark that we construct a small amplitude smooth solution, which can exist on time interval $[0,\frac{T}{\sqrt{\varepsilon}}]$ for a positive constant $T$. Here $\varepsilon$ is a sufficient small positive parameter measures the nonlinear effects.

Next result gives the stability and uniqueness of solutions for equation (\ref{E1-1}) with respect to the initial data in the Euclidean space $\textbf{R}^{n+1}$.
\begin{theorem}
Let $\bar{s}>\max\{2,\frac{n}{2}\}$. Assume that equation (\ref{E1-1}) with two different small initial data $(\bar{F}_0,\bar{F}_1)$ and $(\tilde{F}_0,\tilde{F}_1)$ has two different solutions $\bar{F}$ and $\tilde{F}$ in $\textbf{B}_{R,T}^s:=\{F\in\textbf{C}_T^s:~|||F|||_{s,T}\leq R<1\}$. If
\begin{eqnarray*}
|||\bar{F}_0-\tilde{F}_0|||_{\bar{s},T}+|||\bar{F}_1-\tilde{F}_1|||_{\bar{s},T}\leq \mathcal{O}(\varepsilon),
\end{eqnarray*}
then we have
\begin{eqnarray*}
|||\bar{F}-\tilde{F}|||_{\bar{s},T}\leq \mathcal{O}(\varepsilon).
\end{eqnarray*}
In particularly, when $\bar{F}_0=\tilde{F}_0$ and $\bar{F}_1=\tilde{F}_1$, the solution of equation (\ref{E1-1}) on the time interval $[0,\frac{T}{\sqrt{\varepsilon}}]$ is unique, i.e. $\bar{F}=\tilde{F}$. Here $\mathcal{O}(\varepsilon)$ denotes an infinitesimal and one can see the definition of the norm $|||\cdot|||_{s,T}$ in the section 3.
 \end{theorem}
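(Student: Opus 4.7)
The plan is to set $W := \bar{F} - \tilde{F}$ and derive a linear second-order hyperbolic equation for $W$ whose coefficients depend smoothly on $\bar{F}$ and $\tilde{F}$. Writing (\ref{E1-1}) schematically as $\nabla_{\partial_t}\partial_t F = \mathcal{N}(F, \nabla F, \nabla^2 F)$, where $\mathcal{N}$ incorporates $H(F)$, $\varphi F$, $\varphi \nabla(|F|^2/2)$, the inner-pressure contribution $\rho/Vol(F)$, the normal $\nu$, and the density factors $(d\mu_t/d\mu) v(F)$, the fundamental theorem of calculus gives
\begin{equation*}
\mathcal{N}(\bar F) - \mathcal{N}(\tilde F) = \int_0^1 D\mathcal{N}\bigl(\tilde F + \tau W\bigr)\bigl[W,\nabla W, \nabla^2 W\bigr]\, d\tau,
\end{equation*}
so that $W$ satisfies a quasi-linear wave equation that is \emph{linear in $W$} and whose inhomogeneous terms vanish whenever $W$ and its first two derivatives do.

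Next I would run an energy estimate at level $\bar s$. The hypothesis $\bar s > \max\{2,n/2\}$ is precisely what makes $H^{\bar s}(\Sigma)$ a Banach algebra continuously embedded in $C^1(\Sigma)$, so tame Moser-type product and composition estimates are at our disposal. These control the typical differences
\begin{equation*}
v(\bar F) - v(\tilde F),\quad H(\bar F) - H(\tilde F),\quad \varphi(\bar F)\bar F - \varphi(\tilde F)\tilde F,\quad Vol(\bar F)^{-1} - Vol(\tilde F)^{-1}
\end{equation*}
by $|||W|||_{\bar s,T}$ with multiplicative constants polynomial in $|||\bar F|||_{s,T} + |||\tilde F|||_{s,T} \leq 2R$. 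Commuting derivatives through the quasi-linear principal part and contracting with $\partial_t W$ in the standard way produces an $\bar s$-level energy identity. Applying Gronwall's inequality, while using the smallness of the backgrounds (inherited from Theorem~1.1, where they are of order $\varepsilon$) to absorb the long lifespan $T/\sqrt{\varepsilon}$, yields
\begin{equation*}
|||W|||_{\bar s,T} \leq C\bigl(|||\bar F_0 - \tilde F_0|||_{\bar s,T} + |||\bar F_1 - \tilde F_1|||_{\bar s,T}\bigr)
\end{equation*}
with $C$ independent of $\varepsilon$. Inserting the hypothesis yields the claimed $\mathcal{O}(\varepsilon)$ bound, and setting the initial data equal immediately gives $W \equiv 0$, hence uniqueness.

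The main obstacle is the interplay between the degenerate quasi-linear principal part (the factor $d\mu_t/d\mu$ and the normal direction, which prevent a naive reparametrisation as discussed in the introduction) and the need for \emph{tame} estimates at the comparatively low regularity $\bar s$. Commutator terms produced when derivatives of order $\bar s$ are pushed through coefficients built from $\bar F,\tilde F$ must be split via Moser estimates into a low-norm factor, which delivers the required $\mathcal{O}(\varepsilon)$ smallness and compensates the factor $T/\sqrt{\varepsilon}$ in the Gronwall exponent, and a high-norm factor, which is only required to satisfy the a priori bound $|||\cdot|||_{s,T}\leq R$. Closing the estimate without derivative loss is what forces the threshold $\bar s > \max\{2,n/2\}$, and it is this careful bookkeeping, rather than any conceptual novelty beyond what the Nash--Moser scheme of Theorem~1.1 already provides, that constitutes the technical heart of the proof.
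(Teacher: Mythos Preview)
Your route is genuinely different from the paper's. Instead of linearising along the segment $(1-\tau)\tilde F + \tau\bar F$ and closing a single $\bar s$-level energy estimate via Gronwall, the paper reuses the Nash--Moser machinery of Section~4: after setting $F^{*} = (\bar F - \tilde F) - (\bar F_0 - \tilde F_0) - t(\bar F_1 - \tilde F_1)$ so that $F^{*}$ has zero Cauchy data, it linearises the difference equation at $F^{*}=0$, treats the higher-order remainder $R(F^{*})$ as forcing with the two-derivative loss $|||R(F^{*})|||_{s,T}\le c\,|||F^{*}|||_{s+2,T}^{\,n+1}$ (Lemma~5.1, parallel to Lemma~4.1), inserts the smoothing operators $\Pi_{N_l}$, and iterates along the decreasing scale $s_l$ of (\ref{E3-18})--(\ref{E3-19}) to obtain $|||F^{*}|||_{s_l,T}\lesssim (8\varepsilon\,|||F^{*}|||_{s_0,T})^{(n+1)^l}$, from which both the stability bound and uniqueness follow.

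Your approach is the classical one and is conceptually cleaner when it closes: no nonlinear remainder, no smoothing, one Gronwall. What the paper's heavier scheme buys is twofold. First, it never requires the convex combination $(1-\tau)\tilde F + \tau\bar F$ to be an immersion, which you implicitly need so that your $\tau$-integrated coefficients inherit the ellipticity (\ref{E2-5}) and the normal/tangential splitting underlying (\ref{E2-1})--(\ref{E2-2}); this is not automatic from $\bar F,\tilde F\in\textbf{B}_{R,T}^s$ alone. Second, by arranging zero Cauchy data for $F^{*}$ it sidesteps the one-derivative loss on initial data that is built into the paper's linear estimate (Lemma~3.4). Your sketch asserts the energy argument closes at level~$\bar s$ without loss, but the paper's own linear theory for this weakly hyperbolic system does lose a derivative on the data; you would have to either exploit the regularity gap $s>\bar s$ on the backgrounds or supply a sharper linear estimate than Section~3 provides. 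You also do not spell out how the normal/tangential decomposition---the device that makes the degenerate system tractable---survives integration in~$\tau$.
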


The organization of this paper is as follows. In Section 2, we derive some conservation laws on the solution of equation (\ref{E1-1}).
In section 3, we establish the
existence of weakly linear hyperbolic system which will arise in the linearization
of equation (\ref{E1-1}). Section 5 is devoted to solving equation (\ref{E1-1}) by means of a new Nash-Moser iteration
scheme (For general Nash-Moser implict function theorem, one can see \cite{H,Moser,Nash}. In the section 5, we give the proof of the stability problem with respect to the initial data and the uniqueness of solution for equation (\ref{E1-1}). In the last section, we discuss the extension of our main results to more general case, i.e. $(\mathcal{M},g)$ be an oriented smooth complete $n+1$ dimensional Riemannian manifold.\\

\textbf{Acknowledgements}\\

This work is supported by the Fundamental Research Funds for the Central Universities of Xiamen University, No. 20720150013. This work was done in Beijing International Cen- ter for Mathematical Research, Peking University. The author expresses his sincere thanks to Prof. G. Tian for introducing him to the subject of hyperbolic mean curvature flows.

\section{Conservation Laws}\setcounter{equation}{0}

In this section, we derive three conservation laws, which are satisfied by the solution of (\ref{E1-1}). More precisely, they are the total energy conservation, the momentum conservation and the interior momentum conservation.

Let $X$ be a killing vector field on $\mathcal{M}=\textbf{R}^{n+1}$.
We define the total energy by
\begin{eqnarray*}
E(F(t,\cdot))=\frac{1}{2}\int_{\Sigma}|\partial_tF|^2d\mu-\int_{\Sigma}v(s(F))d\mu_t+\rho\log(\frac{Vol(F)}{Vol_0}),
\end{eqnarray*}
the momentum with respect to $X$ of a solution $F$ of (\ref{E1-1}) by
\begin{eqnarray*}
M_{X}(F(t,\cdot))=\int_{\Sigma}<\partial_tF,X(F)>d\mu,
\end{eqnarray*}
the interior momentum with respect to a vector field $Y$ ($div_{d\mu}Y=0$) by
\begin{eqnarray*}
Q_Y(F(t,\cdot))=\int_{\Sigma}<\partial_tF,F^*Y>d\mu.
\end{eqnarray*}

For the total energy conservation, we have:
\begin{lemma}
Let $F:[0,T)\times\Sigma\longrightarrow\mathcal{M}$ be a solution of (\ref{E1-1}). Then we have

(1). $E(F(t,\cdot))=E(F(0,\cdot))$ for all $t\in[0,T)$.

(2). Let $\partial_tF^T$ be the tangential part of $\partial_tF$. Then we have
\begin{eqnarray*}
\partial_t(\frac{1}{2}|\partial_tF|^2+v\frac{d\mu_t}{d\mu})=<\partial_tF,\nabla v>\frac{d\mu_t}{d\mu}+\frac{\rho}{Vol(F)}<\partial_tF,\nu>\frac{d\mu_t}{d\mu}+v(div\partial_tF^T)\frac{d\mu_t}{d\mu}.
\end{eqnarray*}
\end{lemma}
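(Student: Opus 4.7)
The plan is to establish part (2) first by a pointwise computation on $\Sigma$, and then to derive part (1) from (2) by integrating over $\Sigma$ against the reference measure $d\mu$ and performing one integration by parts.

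For part (2), I would differentiate $\frac{1}{2}|\partial_tF|^2$ and $v\,d\mu_t/d\mu$ separately. The kinetic piece gives $\partial_t(\frac{1}{2}|\partial_tF|^2)=\langle\nabla_{\partial_t}\partial_tF,\partial_tF\rangle$, into which I substitute the Euler--Lagrange equation (\ref{E1-1}); since its right--hand side is a normal vector (using $\nabla(|F|^2/2)=F^T$, so $F-\nabla(|F|^2/2)=\langle F,\nu\rangle\nu$), only $\langle\nu,\partial_tF\rangle$ survives and three scalar contributions $-H$, $\varphi\langle F,\nu\rangle$, $\rho/Vol(F)$ appear, each multiplied by $v\,d\mu_t/d\mu$. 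For the potential piece I use $v'(s)=-\varphi(s)v(s)$ to obtain $\partial_t v=-\varphi v\langle F,\partial_tF\rangle$, together with the first variation of area $\partial_t(d\mu_t)=(\operatorname{div}\partial_tF^T+H\langle\nu,\partial_tF\rangle)\,d\mu_t$. Summing, the two $vH\langle\nu,\partial_tF\rangle$ terms cancel and the two $v\varphi\langle F,\nu\rangle\langle\nu,\partial_tF\rangle$ terms cancel (after decomposing $\langle F,\partial_tF\rangle=\langle F^T,\partial_tF\rangle+\langle F,\nu\rangle\langle\nu,\partial_tF\rangle$); the leftover $-\varphi v\langle F^T,\partial_tF\rangle$ is exactly $\langle\nabla_\Sigma v,\partial_tF\rangle$ since $\nabla_\Sigma v=-\varphi vF^T$, producing the three claimed terms.

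For part (1), integrate the identity from (2) over $\Sigma$ against $d\mu$, using $(d\mu_t/d\mu)\,d\mu=d\mu_t$. Because $\Sigma$ is closed, integration by parts gives $\int_\Sigma v\operatorname{div}\partial_tF^T\,d\mu_t=-\int_\Sigma\langle\nabla_\Sigma v,\partial_tF\rangle\,d\mu_t$ (by tangentiality of $\nabla_\Sigma v$), which cancels the first term on the right of (2). The remaining contribution $\int_\Sigma \rho\langle\nu,\partial_tF\rangle/Vol(F)\,d\mu_t=\rho\,\partial_tVol(F)/Vol(F)$ by the standard formula $\partial_tVol(F)=\int_\Sigma\langle\nu,\partial_tF\rangle\,d\mu_t$ for enclosed volume, and this matches the time derivative of the $\rho\log(Vol(F)/Vol_0)$ piece of $E$ (with the sign prescribed by the definition of $E$); collecting signs yields $\partial_tE=0$. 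The main obstacle is careful bookkeeping on sign and normalization conventions, specifically the choice that $\operatorname{div}_\Sigma\nu=H$, the interpretation of $\nabla(|F|^2/2)$ in (\ref{E1-1}) as the intrinsic surface gradient so its right--hand side is genuinely normal, and the splitting of every inner product $\langle X,\partial_tF\rangle$ into tangential and normal parts to line up the $H$ and $\varphi\langle F,\nu\rangle$ contributions for cancellation; once these conventions are fixed, the lemma reduces to a compact repackaging of the first variation formulas for area, density, and enclosed volume.
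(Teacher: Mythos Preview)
Your proposal is correct and follows essentially the same route as the paper. For part (2) the paper performs the identical pointwise computation---differentiate the kinetic term, substitute (\ref{E1-1}), apply the first variation of area and $\partial_t v=-\varphi v\langle F,\partial_tF\rangle$, and cancel the $H$-terms---only organizing the $\varphi$-cancellation slightly differently: rather than first reducing the right side of (\ref{E1-1}) to a scalar multiple of $\nu$ and splitting $\langle F,\partial_tF\rangle$ into tangential and normal parts, the paper keeps the vector $\varphi F-\varphi\nabla(|F|^2/2)$ intact, recognizes $-v\varphi\nabla(|F|^2/2)=\nabla v$ directly, and observes that the remaining $v\varphi\langle\partial_tF,F\rangle$ cancels $\partial_t v$ in one stroke; for part (1) the paper simply declares the proof ``simple'' and omits it, so your integration-by-parts derivation of (1) from (2) supplies what the paper leaves out.
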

\begin{proof}
The proof of (1) is simple, so we omit it. Now we derive the equality in (2). Using (\ref{E1-1}), we derive
\begin{eqnarray*}
&&\partial_t(\frac{1}{2}|\partial_tF|^2+v\frac{d\mu_t}{d\mu})\nonumber\\
&=&<\partial_tF,\overline{\nabla}_{\partial_t}\partial_tF>+v(div\partial_tF^T)\frac{d\mu_t}{d\mu}
+v<\partial_tF, H\nu>\frac{d\mu_t}{d\mu}+\partial_tv\frac{d\mu_t}{d\mu}\nonumber\\
&=&-v<\partial_tF, H\nu>\frac{d\mu_t}{d\mu}+v<\partial_tF,\varphi F>\frac{d\mu_t}{d\mu}+<\partial_tF,\nabla v>\frac{d\mu_t}{d\mu}\nonumber\\
&&+\frac{\rho}{Vol(F)}<\partial_tF,\nu>\frac{d\mu_t}{d\mu}+v(div\partial_tF^T)\frac{d\mu_t}{d\mu}
+v<\partial_tF, H\nu>\frac{d\mu_t}{d\mu}+\partial_tv\frac{d\mu_t}{d\mu}\nonumber\\
&=&<\partial_tF,\nabla v>\frac{d\mu_t}{d\mu}+\frac{\rho}{Vol(F)}<\partial_tF,\nu>\frac{d\mu_t}{d\mu}+v(div\partial_tF^T)\frac{d\mu_t}{d\mu}.
\end{eqnarray*}
where we use the form of $v$ in the last inequality.
\end{proof}

\begin{lemma}
Assume that there is a positive constant $c$ such that
\begin{eqnarray*}
\int_{\Sigma}vd\mu_t\geq cVol(F^{\frac{n}{n+1}}).
\end{eqnarray*}
Then there exists a positive constant $K$ depending on $Vol_0$, $c$, $\rho$ and $E_0$ such that
\begin{eqnarray*}
Vol_0e^{-\frac{E_0}{\rho}}\leq Vol(F)\leq K,
\end{eqnarray*}
\begin{eqnarray*}
\int_{\Sigma}\frac{1}{2}|\partial_tF|^2d\mu+\int_{\Sigma}vd\mu_t\leq E_0+\rho\log(\frac{K}{Vol_0}).
\end{eqnarray*}
\end{lemma}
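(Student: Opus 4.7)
The plan is to leverage the energy conservation from Lemma 2.1(1) together with the isoperimetric-type hypothesis $\int_\Sigma v\, d\mu_t \geq c\, Vol(F)^{n/(n+1)}$. Writing $E(F(t,\cdot)) = E_0$ and isolating the $\rho \log(Vol(F)/Vol_0)$ term yields a single identity relating the log-volume, the kinetic energy, and the integrated potential, which will do all the work.

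First I would extract the lower bound $Vol(F) \geq Vol_0 e^{-E_0/\rho}$ by discarding the non-negative kinetic term $\frac{1}{2}\int_\Sigma|\partial_tF|^2\, d\mu$ and the non-negative potential $\int_\Sigma v\, d\mu_t$ (positive because $v > 0$ by the exponential definition (\ref{E1-0})) from the conservation identity. This reduces to $\rho \log(Vol(F)/Vol_0) \geq -E_0$, which is the claim after exponentiation.

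For the upper bound $Vol(F) \leq K$, I would substitute the isoperimetric hypothesis into the conservation identity and drop the kinetic term to obtain an inequality of the schematic shape $c\, Vol(F)^{n/(n+1)} - \rho \log(Vol(F)/Vol_0) \leq E_0$. Viewing the left-hand side as a real function $f(x) = c x^{n/(n+1)} - \rho \log(x/Vol_0)$, the power term with exponent $n/(n+1) > 0$ eventually dominates the logarithm, so $f(x) \to +\infty$ as $x \to \infty$. Hence there exists a finite threshold $K = K(c,\rho,E_0,Vol_0) > 0$ above which $f$ exceeds $E_0$, which forces $Vol(F) \leq K$. Re-inserting $Vol(F) \leq K$ into the conservation identity then gives the stated energy bound $\frac{1}{2}\int_\Sigma|\partial_tF|^2\, d\mu + \int_\Sigma v\, d\mu_t \leq E_0 + \rho \log(K/Vol_0)$ at once.

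The only real content here is the upper volume bound, which crucially relies on the isoperimetric-type hypothesis: without a coercive lower bound on $\int_\Sigma v\, d\mu_t$ in terms of $Vol(F)$, one cannot prevent the $\rho \log(Vol(F)/Vol_0)$ contribution from balancing the energy with $Vol(F)$ growing unboundedly, and no a priori constant $K$ would exist. Once the coercivity of $f(x)$ is in hand, the remaining estimates are routine algebraic rearrangements of the conservation law.
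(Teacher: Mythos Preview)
Your proposal is correct and is exactly the standard argument one would expect here; the paper itself omits the proof entirely, deferring to \cite{Notz1}, where the same energy-conservation-plus-coercivity reasoning is carried out. There is nothing to compare beyond noting that your write-up supplies what the paper leaves implicit.
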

\begin{proof}
This proof is simple, and can also be found in \cite{Notz1}, so we omit it.
\end{proof}
For the momentum conservation, we have:
\begin{lemma}
Let $F:[0,T)\times\Sigma\longrightarrow\mathcal{M}$ be a solution of (\ref{E1-1}). Then we have

(1). $M_{X}(F(t,\cdot))=M_{X}(F(0,\cdot))$ for all $t\in[0,T)$.

(2). Let $\partial_tF^T$ be the tangential part of $\partial_tF$. Then we have
\begin{eqnarray*}
\partial_t<\partial_tF,X>=vdiv X^T\frac{d\mu_t}{d\mu}+<\nabla v\frac{d\mu_t}{d\mu},X>+\frac{\rho}{Vol(u)}<\nu,X>\frac{d\mu_t}{d\mu}.
\end{eqnarray*}
\end{lemma}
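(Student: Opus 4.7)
The plan is to prove (2) first by a direct pointwise computation, then obtain (1) by integrating (2) over $\Sigma$; this parallels the structure of Lemma 2.1.

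For (2), I would differentiate $\langle\partial_t F,X(F)\rangle$ in $t$, using compatibility of the Euclidean connection $\overline{\nabla}$ with $\tilde g$:
\begin{equation*}
\partial_t\langle\partial_t F,X(F)\rangle=\langle\overline{\nabla}_{\partial_t}\partial_t F,X\rangle+\langle\partial_t F,\overline{\nabla}_{\partial_t F}X\rangle.
\end{equation*}
The cross term vanishes by the Killing condition: since $\overline{\nabla}X$ is $\tilde g$-antisymmetric, $\langle\partial_t F,\overline{\nabla}_{\partial_t F}X\rangle=\tfrac{1}{2}(L_X\tilde g)(\partial_t F,\partial_t F)=0$. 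Into the remaining term I would substitute the evolution equation (\ref{E1-1}). Using the relation $\nabla v=-v\varphi\,\nabla(|F|^2/2)$ coming from (\ref{E1-0}), the combination $v\varphi F-v\varphi\nabla(|F|^2/2)$ that appears in (\ref{E1-1}) reorganizes, after pairing with $X$ and decomposing $F=F^T+\langle F,\nu\rangle\nu$, into the two pieces $\langle\nabla v,X\rangle$ and $v\varphi\langle F,\nu\rangle\langle\nu,X\rangle$. The crucial remaining identity converts $-vH\langle\nu,X\rangle$ into $v\,\mathrm{div}\,X^T$: since $X$ is Killing in the ambient Euclidean space, $\overline{\nabla}X$ antisymmetric forces $\mathrm{div}_{\Sigma}X=0$, so the general decomposition $\mathrm{div}_{\Sigma}X=\mathrm{div}\,X^T+H\langle X,\nu\rangle$ yields $\mathrm{div}\,X^T=-H\langle X,\nu\rangle$. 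Collecting these substitutions produces exactly the identity asserted in (2).

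For (1), integrate (2) against $d\mu$. Since $d\mu$ is a fixed reference measure on $\Sigma$ it commutes with $\partial_t$, so $\frac{d}{dt}M_X(F(t,\cdot))=\int_{\Sigma}\partial_t\langle\partial_t F,X\rangle\,d\mu$. On the right-hand side the first two terms cancel: divergence theorem on the closed hypersurface gives $\int_{\Sigma}v\,\mathrm{div}\,X^T\,d\mu_t=-\int_{\Sigma}\langle\nabla v,X^T\rangle\,d\mu_t=-\int_{\Sigma}\langle\nabla v,X\rangle\,d\mu_t$ because $\nabla v$ is tangential. The third term equals $\frac{\rho}{Vol(F)}\int_{\Sigma}\langle\nu,X\rangle\,d\mu_t$, and this vanishes by Stokes' theorem on the bounded domain $\Omega$ enclosed by $F(\Sigma)$, since $X$ is divergence-free in $\mathbf{R}^{n+1}$ as a Killing field.

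The main obstacle I anticipate is bookkeeping: unwinding the vector/scalar structure of (\ref{E1-1}) — where some quantities like $-H\nu$ and $\rho/Vol(F)$ are normal while $\varphi F$ is a radial ambient vector whose tangential part $\varphi F^T=\varphi\nabla(|F|^2/2)$ cancels against the explicit $-\varphi\nabla(|F|^2/2)$ term to leave only the normal component — and then matching the resulting decomposition with the ambient/tangential split of $X$. The conceptual step that does the real work is the Killing identity $\mathrm{div}\,X^T=-H\langle X,\nu\rangle$, which is what rewrites the mean-curvature force as a tangential divergence and thereby makes the right-hand side of (2) integrate to zero.
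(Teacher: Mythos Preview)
Your strategy mirrors the paper's: differentiate $\langle\partial_tF,X\rangle$, kill the cross term by antisymmetry of $\overline{\nabla}X$, substitute (\ref{E1-1}), then integrate. The difference lies in how the Killing hypothesis is cashed out pointwise. You invoke the differential identity $\mathrm{div}_{\Sigma}X=0\Rightarrow\mathrm{div}\,X^{T}=-H\langle X,\nu\rangle$ directly, and for (1) you use the ambient divergence theorem on the enclosed region together with an integration by parts that cancels the first two terms. The paper instead introduces the local flow $\phi_q$ of $X$, sets $F_q=\phi_q\circ F$, and rewrites both the mean-curvature contribution and the contribution $v\varphi\langle F,X\rangle$ as $q$-derivatives at $q=0$ of $\log d\mu_t(F_q)$ and $v(s(F_q))$; these vanish because $\phi_q$ is an isometry. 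Likewise, for (1) the paper writes $\int_{\Sigma}\langle\nu,X\rangle\,d\mu_t=\partial_q|_{q=0}\mathrm{Vol}(F_q)=0$. Your route is more elementary; the paper's flow argument is more geometric and disposes of all the Killing-sensitive terms in one stroke.

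One piece of your bookkeeping does not close as written. Your final paragraph correctly notes that $\varphi F^{T}=\varphi\nabla(|F|^2/2)$, so after pairing with $X$ the combination $v\varphi F-v\varphi\nabla(|F|^2/2)$ leaves the \emph{single} normal contribution $v\varphi\langle F,\nu\rangle\langle\nu,X\rangle$, not the two pieces you list earlier. This term is not equal to $\langle\nabla v,X\rangle=-v\varphi\langle F^{T},X\rangle$; the two differ by $v\varphi\langle F,X\rangle$, so ``collecting these substitutions'' does not yet reproduce the identity in (2). In the paper's argument this discrepancy is exactly the term identified as $\partial_q|_{q=0}v(s(F_q))$ and set to zero---a step which in fact uses that the flow of $X$ preserves $|F|$, i.e.\ that $X$ is a rotation about the origin rather than an arbitrary Euclidean Killing field.
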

\begin{proof}
The momentum conservation (1) can be obtained by
integrating (2) with respect to $d\mu_0$ and $dt$ and using that
\begin{equation*}
\int_{\Sigma}<\nu,X>d\mu_t=\frac{\partial}{\partial_q}|_{q=0}Vol(F_q)=0.
\end{equation*}
So we only prove (2).

We denote $\phi_q$ as the local flow of $X$, which is by definition an isometry, and set $F_q=\phi_q\circ F$. Then we have
\begin{eqnarray*}
\partial_t<\partial_tF,X>&=&<\overline{\nabla}_{\partial_t}\partial_tF,X>+<\partial_tF,\overline{\nabla}_{\partial_t}X>\nonumber\\
&=&-v<H\nu,X>\frac{d\mu_t}{d\mu}+<v\varphi F\frac{d\mu_t}{d\mu},X>+<\nabla v\frac{d\mu_t}{d\mu},X>\nonumber\\
&&+\frac{\rho}{Vol(u)}<\nu,X>\frac{d\mu_t}{d\mu}\nonumber\\
&=&vdiv X^T\frac{d\mu_t}{d\mu}-\frac{\partial}{\partial_q}|_{q=0}\log(d\mu_t(F_q))\frac{d\mu_t}{d\mu}+\frac{\partial}{\partial_q}|_{q=0}v(s(F_q))\frac{d\mu_t}{d\mu}\nonumber\\
&&+<\nabla v\frac{d\mu_t}{d\mu},X>+\frac{\rho}{Vol(u)}<\nu,X>\frac{d\mu_t}{d\mu}\nonumber\\
&=&vdiv X^T\frac{d\mu_t}{d\mu}+<\nabla v\frac{d\mu_t}{d\mu},X>+\frac{\rho}{Vol(u)}<\nu,X>\frac{d\mu_t}{d\mu}.
\end{eqnarray*}
Furthermore, since $X$ is killing, we get
\begin{eqnarray*}
&&\frac{\partial}{\partial_q}|_{q=0}\log(d\mu_t(F_q))=0,\\
&&\frac{\partial}{\partial_q}|_{q=0}v(s(F_q))=0.
\end{eqnarray*}
\end{proof}

\begin{lemma}
Let $F:[0,T)\times\Sigma\longrightarrow\mathcal{M}$ be a solution of (\ref{E1-1}) and $\partial_tF^T$ be the tangential part of $\partial_tF$. Then we have

(1). $Q_{Y}(F(t,\cdot))=Q_{Y}(F(0,\cdot))$ for all $t\in[0,T)$.

(2). $\partial_t<\partial_tF,F_*Y>=\frac{1}{2}div_{d\mu}(|\partial_tF|^2Y)$.
\end{lemma}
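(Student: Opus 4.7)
The plan is to derive (2) first and then obtain (1) by a single integration. The key structural observation is that $F_*Y$ is tangent to $F(\Sigma)$, while by the equation of motion (\ref{E1-1}), $\nabla_{\partial_t}\partial_tF$ is purely normal. So pairing the Euler--Lagrange equation with $F_*Y$ kills every term except a kinematic one.

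Concretely, I would start from
\begin{eqnarray*}
\partial_t\langle \partial_tF,F_*Y\rangle=\langle\overline{\nabla}_{\partial_t}\partial_tF,F_*Y\rangle+\langle\partial_tF,\overline{\nabla}_{\partial_t}F_*Y\rangle.
\end{eqnarray*}
The first term vanishes by (\ref{E1-1}): the right-hand side there is a scalar multiple of $\nu$, and $\langle\nu,F_*Y\rangle=0$. For the second term, extend $Y$ to be $t$-independent on $[0,T)\times\Sigma$, so that the coordinate vector fields $\partial_t$ and $Y$ commute and hence $[\partial_t,F_*Y]=0$ as vector fields along $F$. Torsion-freeness of the ambient Levi--Civita connection then gives $\overline{\nabla}_{\partial_t}F_*Y=\overline{\nabla}_{F_*Y}\partial_tF$, and compatibility with the metric yields
\begin{eqnarray*}
\langle\partial_tF,\overline{\nabla}_{F_*Y}\partial_tF\rangle=\tfrac{1}{2}(F_*Y)|\partial_tF|^2=\tfrac{1}{2}Y(|\partial_tF|^2),
\end{eqnarray*}
since $|\partial_tF|^2$ is a function on $[0,T)\times\Sigma$ and $F_*Y$ acts on it by pullback just as $Y$ does.

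To convert this into a $d\mu$-divergence, I use the assumption $\mathrm{div}_{d\mu}Y=0$: the Leibniz rule gives $\mathrm{div}_{d\mu}(fY)=Y(f)+f\,\mathrm{div}_{d\mu}Y=Y(f)$ for any smooth $f$. Applying this with $f=|\partial_tF|^2$ produces exactly the formula in (2). For (1), I simply integrate (2) against $d\mu$ over $\Sigma$; because $\Sigma$ is closed, the divergence theorem makes the right-hand side vanish, so $\tfrac{d}{dt}Q_Y(F(t,\cdot))=0$ and the conservation follows.

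The computation is mostly bookkeeping; the only place requiring care is justifying $[\partial_t,F_*Y]=0$ and the interchange $\overline{\nabla}_{\partial_t}F_*Y=\overline{\nabla}_{F_*Y}\partial_tF$, which needs $Y$ to be viewed as a $t$-independent vector field on $\Sigma$ pushed forward by $F$, together with the flatness/torsion-freeness of the ambient Euclidean connection. Once this is in place, the rest is a direct consequence of $\mathrm{div}_{d\mu}Y=0$ and the closedness of $\Sigma$, with no further input from the right-hand side of (\ref{E1-1}).
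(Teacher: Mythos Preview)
Your argument is correct and is exactly the standard computation one expects here: the normality of the right-hand side of (\ref{E1-1}) kills the first term, the symmetry lemma (torsion-freeness) swaps $\overline{\nabla}_{\partial_t}F_*Y$ for $\overline{\nabla}_{F_*Y}\partial_tF$, and the divergence-free assumption on $Y$ together with closedness of $\Sigma$ finishes both (2) and (1). The paper itself does not carry out the proof; it simply refers to Proposition~2.4 in \cite{Notz}, whose argument is the one you have written down.
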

\begin{proof}
This proof is similar with Proposition 2.4 in \cite{Notz}, so we omit it.
\end{proof}

\section{Analysis of linearized weakly hyperbolic systems}\setcounter{equation}{0}

This section is to discuss the linearized system of (\ref{E1-1}), which is a weakly hyperbolic linear systems. More precisely, it is obtained by decomposing with respect to time dependent subbundles into a system of coupled linear wave equations and linear ODEs. Here we prove the local existence of smooth solution of an abstract system for convenience.

Firstly, we give some definitions of the norm and some inequalities. The following setting is inspired by the work of \cite{Notz}. Let $\Psi:\Omega\longrightarrow\Sigma$ be a $k$-dimensional Riemannian vector bundle over $\Sigma$, and $\textbf{F}$ be the Fr\'{e}chet space $\textbf{C}^{\infty}([0,T]\times\Sigma,\Omega)$ of smooth time dependent sections of $\Omega$. Assume that we have an atlas of coordinate charts $(x_{\alpha},U_{\alpha})$ of $\Sigma$ such that $\alpha=1,\cdots,J$, $x_{\alpha}(U_{\alpha})=B_1(0)$, and the sets $x^{-1}_{\alpha}(B_1(0))$ cover $\Sigma$, where $B_1(0)\subset\textbf{R}^n$ is a ball with radius 1. Meanwhile, for each such chart there are smooth time dependent local sections $\nu_{A}^{(\alpha)}$, $A=1,\cdots,d'$, and $\tau_d^{(\alpha)}$, $d=1,\cdots,d''$, of $\Omega$ $(d'+d''=k)$ defined on the domain of the chart that together form a basis of the fiber over each point in $U_{\alpha}$. For any other chart $(x_{\beta},U_{\beta})$ with $U_{\alpha}\cap U_{\beta}\neq{\O}$, we assume that the $\nu_{A}^{(\alpha)}(p)$ and $\nu_{A}^{(\beta)}(p)$, $p\in U_{\alpha}\cap U_{\beta}$, span the same space and are bases for this space. The spaces spanned by the $\nu_A^{(\alpha)}$ and the $\tau_d^{(\alpha)}$ are orthogonal. For the specific (fixed) coordinate chart, we omit the index $(\alpha)$ for convenience. Let $h\in\textbf{F}$. In each coordinate chart we take the decompose
\begin{eqnarray*}
h=h_{\bot}+h_{\top}=u^A\nu_A+r^d\tau_d.
\end{eqnarray*}
For any $s\in[1,\infty]$, $\textbf{H}^s(B_1(0),\textbf{R}^{d'})^{\times J}$ be the Sobolev space of the set of functions $(u_{(\alpha)}^A)$. The corresponding norm is
\begin{eqnarray*}
\|u\|_s=\sum_{\alpha=1}^J\sum_{A=1}^{d'}\|u_{(\alpha)}^A\|_{\textbf{H}^s(B_1(0))}.
\end{eqnarray*}
For $\nu$, we use the norm
\begin{eqnarray*}
\|\nu\|_s=\sum_{\alpha=1}^J\sum_{A=1}^{d'}\|\nu_{\alpha}^A\circ x_{\alpha}^{-1}\|_{\textbf{H}^s(B_1(0))}.
\end{eqnarray*}
$\textbf{C}^{l}([0,T];\textbf{H}^s(B_1(0),\textbf{R}^{d'})^{\times J})$ denotes the
function spaces with the norm
\begin{eqnarray*}
\|u\|_{\textbf{C}^{l}([0,T];\textbf{H}^s(B_1(0),\textbf{R}^{d'})^{\times J})}=\sup_{[0,T]}\sum_{i=0}^l\sum_{\alpha=1}^J\sum_{A=1}^{d'}\|\partial_{t}^iu_{(\alpha)}^A\|_{\textbf{H}^s(B_1(0))}.
\end{eqnarray*}
$\textbf{C}_T^{s}:=\cap_{l=0}^i\textbf{C}^l([0,T];\textbf{H}^{s-i}(B_1(0),\textbf{R}^{d'})^{\times J})$ denotes function space with the spacetime norm
\begin{eqnarray*}
|||u|||_{s,T}=\sup_{[0,T]}\sum_{i=0}^2\sum_{\alpha=1}^J\sum_{A=1}^{d'}\|\partial_{t}^iu_{(\alpha)}^A\|_{\textbf{H}^{s-i}(B_1(0))}.
\end{eqnarray*}
We similarly define $\|\cdot\|_{\textbf{C}^s}$ as the spatial $\textbf{C}^s$-norm and the $\textbf{L}^2$-Sobolev norm of order $s$ in space and time.
It is obviously that
\begin{eqnarray*}
\|u\|_{\textbf{H}^s(B_1(0))}\leq\|u\|_{\textbf{C}^{0}([0,T];\textbf{H}^s(B_1(0)))}\leq|||u|||_{s,T}.
\end{eqnarray*}

We make use of the following inequalities, which can be proven using classical methods of calculus; see, for example, the book of Chapter 13 in \cite{Taylor}.
Note that while versions of these estimates hold in all dimensions, as presented here the estimates
are dependent on the dimension of $\Sigma$ being $n$. Generic constants are denoted by $c_0,c_1,\ldots$, their values may vary in the same
formula or in the same line.

Sobolev inequality: For $s>\frac{n}{2}$ and $u\in\textbf{H}^s(B_1(0))$ we have
\begin{eqnarray*}
\|u\|_{\textbf{L}^{\infty}(B_1(0))}\leq\|u\|_{\textbf{H}^{s}(B_1(0))}.
\end{eqnarray*}

Product inequality: For $u,v\in\textbf{L}^{\infty}(B_1(0))\cap\textbf{H}^{s}(B_1(0))$ we have
\begin{eqnarray*}
\|uv\|_{\textbf{H}^s(B_1(0))}\leq\|u\|_{\textbf{H}^{s}(B_1(0))}\|v\|_{\textbf{L}^{\infty}(B_1(0))}+\|u\|_{\textbf{L}^{\infty}(B_1(0))}\|v\|_{\textbf{H}^{s}(B_1(0))}.
\end{eqnarray*}
In particular, if $s>\frac{n}{2}$, we have
\begin{eqnarray*}
\|uv\|_{\textbf{H}^s(B_1(0))}\leq\|u\|_{\textbf{H}^{s}(B_1(0))}\|v\|_{\textbf{H}^{s}(B_1(0))}.
\end{eqnarray*}

We denote the spatially weighted Lebesgue spaces by $\textbf{L}^2_s(\textbf{R}^n)$, which is equipped with the norm
\begin{eqnarray*}
\|u\|_{\textbf{L}^2_s}=\int_{\textbf{R}^n}(1+|x|^2)^{\frac{l}{2}}|u|^2.
\end{eqnarray*}
Then the Fourier transform is an isomorphism between $\textbf{L}^2_s(\textbf{R}^n)$ and $\textbf{H}^s(\textbf{R}^n)$.

For $T>0$, $s>\max\{\frac{n}{2},2\}$ and $0<R<1$, we define
\begin{eqnarray*}
\textbf{B}_{R,T}^s:=\{u\in\textbf{C}_T^s=\cap_{l=0}^i\textbf{C}^l([0,T];\textbf{H}^{s-l}(B_1(0),\textbf{R}^{d'})^{\times J}):~|||u|||_{s,T}\leq R<1\}.
\end{eqnarray*}

We say that
\begin{eqnarray*}
h=u^A\nu_A+r^d\tau_d
\end{eqnarray*}
is a solution of a weakly hyperbolic linear system if in each local coordinate chart $(x_{\alpha},U_{\alpha})$ we have
\begin{eqnarray}\label{E2-1}
&&\partial_{tt}u^A-v(F)L^Au^A-v(F)N^Ar-v(F)Q^Au^A=h^A,\\
\label{E2-2}
&&\partial_{tt}r^d-v(F)M^dr-v(F)P^du=h^d,
\end{eqnarray}
with the initial data
\begin{eqnarray}\label{E2-3}
&&u^A(0)=u^A_0,~~\partial_tu^A(0)=u^A_1,\\
\label{E2-4}
&&r^d(0)=r^A_0,~~\partial_tr^d(0)=r^A_1,
\end{eqnarray}
where $v(F)=\exp(-\frac{n}{2}\int_1^{\frac{|F|^2}{2}}\frac{\eta(w)}{w}dw)$ and
\begin{eqnarray}
\label{E2-4-1}
\quad L^Au^A&=&a^{Aij}(t,x)\partial_i\partial_ju^A+a^{Ai}(t,x,\nu_A,\tau_d)\partial_iu^A+a^A(t,x,\nu_A,\tau_d)u^A,~~~~~\\
\label{E2-4-2}
N^Ar&=&n_{j}^{Ai}(t,x,\nu_A,\tau_d)\partial_ir^j+n_i^A(t,x,\nu_A,\tau_d)r^i+n_d^{A0}(t,x,\nu_A,\tau_d)\partial_tr^d\nonumber\\
&&+n^{A1}(t,x,\nu_A,\tau_d)\sum_{\beta=1}^J\int_{\Sigma}b_{(\beta)j}^Ar^j_{(\beta)}d\mu,\\
\label{E2-4-3}
Q^Au&=&q_{B}^{Ai}(t,x,\nu_A,\tau_d)\partial_iu^B+q_B^A(t,x,\nu_A,\tau_d)u^B+q_{B}^{A0}(t,x,\nu_A,\tau_d)\partial_tu^B\nonumber\\
&&+q^{A1}(t,x,\nu_A,\tau_d)\sum_{\beta=1}^J\int_{\Sigma}c_{(\beta)B}^Au^B_{(\beta)}d\mu,\\
\label{E2-4-4}
M^dr&=&m_i^d(t,x,\nu_A,\tau_d)r^i+m_i^{d0}(t,x,\nu_A,\tau_d)\partial_tr^i,\\
\label{E2-4-5}
P^du&=&P_B^{dj}(t,x,\nu_A,\tau_d)\partial_ju^B+P_B^{d0}(t,x,\nu_A,\tau_d)\partial_tu^B.
\end{eqnarray}
We assume all the coefficients in (\ref{E2-1})-(\ref{E2-2}) and $u^A$ and $r^d$ to be smooth functions on $x_{\alpha}(U_{\alpha})$, and $supp~ b_{(\beta)j}^A,~~supp~ c^A_{(\beta)B}\subset x^{-1}_{\beta}(B_1(0))$, $a^{Aij}(t,x)=a^{Aji}(t,x)$ and
$a^{Aij}(t,x)$ satisfies
\begin{eqnarray}\label{E2-5}
\rho_0\delta^{ij}|\xi|^2\leq a^{Aij}\xi_i\xi_j\leq\rho_1\delta^{ij}|\xi|^2,~~\forall \xi\in \textbf{R}^n,
\end{eqnarray}
for some positive constants $\rho_0\leq\rho_1$. On the other hand, we need that the operators are coordinate invariant under coordinate transformation on $\Sigma$ and under a change of basis between different $(\nu_A^{(\alpha)},\tau_d^{(\alpha)})$ and $(\nu_A^{(\beta)},\tau_d^{(\beta)})$. This means that the external force $W=h^A\nu_1+h^d\tau_d$ is an element of $\Omega$.

For the coefficient of the operators $L^A$, $N^A$, $Q^A$, $M^d$ and $P^d$, we define the local norm
\begin{eqnarray*}
[L]_s&=&\sum_{\alpha=1}^J[L]_{s,\alpha}=\sum_{\alpha=1}^J(\sum_{A,i,j}\|a^{Aij}\|_{\textbf{H}^s(\textbf{B}_1(0))}+\sum_{A,i}\|a^{Ai}\|_{\textbf{H}^s(\textbf{B}_1(0))}\\
&&+\sum_{A}\|a^{A}\|_{\textbf{H}^s(\textbf{B}_1(0))}),
\end{eqnarray*}
\begin{eqnarray*}
[N]_s&=&\sum_{\alpha=1}^J[N]_{s,\alpha}=\sum_{\alpha=1}^J(\sum_{A,i,j}\|n_{j}^{Ai}\|_{\textbf{H}^s(\textbf{B}_1(0))}+\sum_{A,i}\|n_i^A\|_{\textbf{H}^s(\textbf{B}_1(0))}\\
&&+\sum_{A,d}\|n_d^{A0}\|_{\textbf{H}^s(\textbf{B}_1(0))}+\sum_{A}\|n^{A1}\|_{\textbf{H}^s(\textbf{B}_1(0))}+\sum_{A,j}\|b_{(\alpha) j}^A\|_{\textbf{H}^s(\textbf{B}_1(0))}),
\end{eqnarray*}
\begin{eqnarray*}
[Q]_s&=&\sum_{\alpha=1}^J[Q]_{s,\alpha}=\sum_{\alpha=1}^J(\sum_{A,B,i}\|q_{B}^{Ai}\|_{\textbf{H}^s(\textbf{B}_1(0))}+\sum_{A,B}\|q_B^A\|_{\textbf{H}^s(\textbf{B}_1(0))},\\
&&+\sum_{A,B}\|q_{B}^{A0}\|_{\textbf{H}^s(\textbf{B}_1(0))}+\sum_{A}\|q^{A1}\|_{\textbf{H}^s(\textbf{B}_1(0))}+\sum_{A,j}\|c_{(\alpha) j}^A\|_{\textbf{H}^s(\textbf{B}_1(0))}),
\end{eqnarray*}
\begin{eqnarray*}
[M]_s=\sum_{\alpha=1}^J[M]_{s,\alpha}=\sum_{\alpha=1}^J(\sum_{d,i}\|m_i^d\|_{\textbf{H}^s(\textbf{B}_1(0))}+\sum_{d,i}\|m_i^{d0}\|_{\textbf{H}^s(\textbf{B}_1(0))}),
\end{eqnarray*}
\begin{eqnarray*}
[P]_s=\sum_{\alpha=1}^J[P]_{s,\alpha}=\sum_{\alpha=1}^J(\sum_{B,d,j}\|P_B^{dj}\|_{\textbf{H}^s(\textbf{B}_1(0))}+\sum_{B,d}\|P_B^{d0}\|_{\textbf{H}^s(\textbf{B}_1(0))}).
\end{eqnarray*}
Assume that
\begin{eqnarray}
\label{E2-5-1}
&&[L]_s+[\partial_tL]_s+[N]_{s}+[Q]_s+[M]_s+[P]_s\leq c_0,\\
\label{E2-5-2}
&&\det(<\nu_A,\nu_B>)>c_1,~~\det(<\tau_d,\tau_l>)>c_1.
\end{eqnarray}
We give some estimates on $\nu=\frac{\partial_1F\times\ldots\times\partial_nF}{|\partial_1F\times\ldots\times\partial_nF|}$ and $\tau_d=\partial_dF$. The proof of the following results is similar with the results in page 26, 49 of \cite{Notz1}, so we omit it.
\begin{lemma}
Let $F\in\textbf{B}_{R,T}^s$. Assume that  $\|\nu\|_{\textbf{C}^0}+\|\tau\|_{\textbf{C}^0}\leq R$ for some $R>0$ and (\ref{E2-5-2}) holds.
Then there exists a constant $c_1$ such that
\begin{eqnarray*}
&&\|\nu^{AB}\|_{\textbf{H}^s(\textbf{B}_1(0))}\leq c_1(1+\|\nu\|_{\textbf{H}^s(\textbf{B}_1(0))}),\\
&&\|\tau^{dl}\|_{\textbf{H}^s(\textbf{B}_1(0))}\leq c_1(1+\|\tau\|_{\textbf{H}^s(\textbf{B}_1(0))}),\\
&&|||\nu^{AB}|||_{\textbf{H}^s([0,T]\times\textbf{B}_1(0))}\leq c_1(1+\|\nu\|_{\textbf{H}^s([0,T]\times\textbf{B}_1(0))}),\\
&&|||\tau^{dl}|||_{\textbf{H}^s([0,T]\times\textbf{B}_1(0))}\leq c_1(1+\|\tau\|_{\textbf{H}^s([0,T]\times\textbf{B}_1(0))}),\\
&&\|\nu\|_{s}\leq c_1(1+\|F\|_{s+1}),~~~|||\nu|||_{s}\leq c_1(1+|||F|||_{s+1}),\\
&&\|\partial_t^i\nu\|_s\leq c_1(1+\sum_{l=0}^i\|\partial^l_tF\|_{s+1}),~~for~i>1,\\
&&\|\nu^{AB}\|_{\textbf{C}^0(\textbf{B}_1(0))}\leq c_1,~~\|\tau^{dl}\|_{\textbf{C}^0(\textbf{B}_1(0))}\leq c_1.
\end{eqnarray*}
\end{lemma}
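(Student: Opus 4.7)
The plan is to derive all estimates from two classical tools: (i) Cramer's rule to control the matrix inverses pointwise, and (ii) the Moser-type composition/product inequalities already recorded in the excerpt (valid for $s > n/2$). Throughout, the non-degeneracy hypothesis (\ref{E2-5-2}) and $\|\nu\|_{\textbf{C}^0} + \|\tau\|_{\textbf{C}^0} \leq R$ will be used to keep all Jacobians and denominators bounded below. I would carry the argument out in four steps, one for each bullet group of the statement.

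First, for the $\textbf{C}^0$ bounds on $\nu^{AB}$ and $\tau^{dl}$, I would use Cramer's rule to write
\[
\nu^{AB} = \frac{\operatorname{cof}_{AB}(\langle \nu_E, \nu_F\rangle)}{\det(\langle \nu_E,\nu_F\rangle)},
\qquad
\tau^{dl} = \frac{\operatorname{cof}_{dl}(\langle \tau_p,\tau_q\rangle)}{\det(\langle \tau_p,\tau_q\rangle)}.
\]
The cofactor entries are polynomials in the bounded quantities $\langle \nu_E,\nu_F\rangle$ and $\langle \tau_p,\tau_q\rangle$ (bounded by $R$ in $\textbf{C}^0$), while the denominators are bounded below by $c_1$. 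This yields the pointwise bounds at once.

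Second, for the $\textbf{H}^s$ and spacetime bounds on the inverse metrics I would use the identity
\[
\partial \nu^{AB} = -\nu^{AE}\,\partial(\langle \nu_E,\nu_F\rangle)\,\nu^{FB},
\]
and similarly for $\tau^{dl}$. Iterating this Leibniz-style and invoking the product inequality (together with the $\textbf{C}^0$ bounds from the previous step) one obtains by induction on $s$
\[
\|\nu^{AB}\|_{\textbf{H}^s(\textbf{B}_1(0))}
\leq c_1 \bigl(1 + \|\langle \nu_E,\nu_F\rangle\|_{\textbf{H}^s(\textbf{B}_1(0))}\bigr)
\leq c_1\bigl(1+\|\nu\|_{\textbf{H}^s(\textbf{B}_1(0))}\bigr),
\]
where the last step is again the product inequality. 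The spacetime estimates follow by the identical argument, since the product inequality holds verbatim in $\textbf{H}^s([0,T]\times\textbf{B}_1(0))$.

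Third, for $\|\nu\|_s \leq c_1(1+\|F\|_{s+1})$, I would use the definition
\[
\nu = \frac{\partial_1 F\times\cdots\times\partial_n F}{|\partial_1 F\times\cdots\times\partial_n F|}.
\]
The numerator is a polynomial of degree $n$ in the first spatial derivatives of $F$, hence its $\textbf{H}^s$-norm is bounded by a polynomial in $\|F\|_{s+1}$ via the product inequality. The denominator satisfies
$|\partial_1 F\wedge\cdots\wedge\partial_n F|^2 = \det(\langle \tau_p,\tau_q\rangle) \geq c_1 > 0$
by (\ref{E2-5-2}), and its reciprocal is a smooth function of the bounded quantities $\langle \tau_p,\tau_q\rangle$; the bound on its $\textbf{H}^s$-norm is then handled exactly as in Step 2 (treating $1/\sqrt{\det(\cdot)}$ as a smooth composition). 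Combining these with the product inequality one more time yields $\|\nu\|_s \leq c_1(1+\|F\|_{s+1})$, and the spacetime analogue follows the same way. The estimate for $\|\partial_t^i\nu\|_s$ is obtained by induction on $i$: differentiating the explicit formula for $\nu$ in $t$ replaces some spatial derivatives $\partial_j F$ by $\partial_t^k\partial_jF$ for $k\leq i$, and each such factor is controlled by $\|\partial_t^k F\|_{s+1}$ through the same product/composition reasoning.

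The only real obstacle is the bookkeeping associated with applying Moser-type composition estimates to the two nonlinearities that appear, namely the matrix inversion $A\mapsto A^{-1}$ and the normalization $v\mapsto v/|v|$. Both are smooth maps on the open sets determined by (\ref{E2-5-2}) and $\|\nu\|_{\textbf{C}^0},\|\tau\|_{\textbf{C}^0}\leq R$, so once one records the standard composition lemma (smooth $G$ with $G(0)=0$ satisfies $\|G(u)\|_{\textbf{H}^s} \leq c(\|u\|_{\textbf{C}^0})(1+\|u\|_{\textbf{H}^s})$ on any bounded set), each of the listed estimates is mechanical. Since no single estimate is deep and the paper explicitly refers to \cite{Notz1}, I would present only a representative calculation (e.g.\ the one for $\nu^{AB}$) in detail and assert the others as parallel.
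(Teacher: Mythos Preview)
Your proposal is correct and is precisely the standard route one follows for such estimates. The paper, in fact, does not give any proof of this lemma at all: it simply states that ``the proof of the following results is similar with the results in page 26, 49 of \cite{Notz1}, so we omit it.'' Your outline via Cramer's rule for the inverse metrics, the differentiation identity $\partial\nu^{AB}=-\nu^{AE}\partial(\langle\nu_E,\nu_F\rangle)\nu^{FB}$, and the Moser product/composition inequalities applied to the explicit formula $\nu=\partial_1F\times\cdots\times\partial_nF/|\partial_1F\times\cdots\times\partial_nF|$ is exactly how the cited reference handles it, so there is nothing to compare.
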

From above result, we can see
\begin{eqnarray*}
\|\nu\|_{s+1}+\|\partial_t\nu\|_{s+1}+\|\partial_t^2\nu\|_{s}^2+\|\tau\|_{s+1}+\|\partial_t\tau\|_{s+1}\leq c_2.
\end{eqnarray*}

The following result is the main result in this section, which states the existence of linear system (\ref{E2-1})-(\ref{E2-2}).
\begin{proposition}
Assume that (\ref{E2-5})-(\ref{E2-5-2}) holds. Let $h_0,h_1\in\textbf{C}^{\infty}(\Sigma,\Omega)$ and $F\in\textbf{B}_{R,T}^s$ be given.
Then system (\ref{E2-1})-(\ref{E2-2}) has a unique smooth solution $h$ on $[0,T]\times\Sigma$ with $h(0)=h_0$ and $\partial_th(0)=h_1$ for some given external force $W=h^A\nu_1+h^d\tau_d\in\textbf{C}^{\infty}([0,T]\times\Sigma,\Omega)$.
\end{proposition}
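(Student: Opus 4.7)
The plan is to construct the smooth solution by the classical energy method combined with a Galerkin (or Friedrichs-mollifier) approximation. The key structural observation is that, although the system is only weakly hyperbolic, the $u$-equations are genuine wave equations with a uniformly elliptic principal part $v(F)a^{Aij}\partial_i\partial_j$ (using (\ref{E2-5}) and the pointwise positivity of $v(F)$), while the $r$-equations carry no second-order spatial operator and are therefore essentially second-order ODEs in $t$ coupled to $u$ through the first-order operators $N^A$ and $P^d$. This asymmetry still allows an $L^2$-based energy identity to close, because $P^d u$ is only first order in $u$ and $N^A r$ appears as a source in the wave equation for $u$.

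First, I would localize to a coordinate chart using the atlas $\{(x_\alpha,U_\alpha)\}$ and a subordinate partition of unity; the coordinate-invariance assumption on the operators lets me patch the local pieces. In each chart, and for each $0\leq k\leq s$, I would define the energy
\begin{eqnarray*}
E_k(t)&=&\sum_{|\beta|\leq k}\sum_{\alpha,A,d}\int_{B_1(0)}\Big(|\partial_t\partial^\beta u^A|^2+v(F)a^{Aij}\partial_i\partial^\beta u^A\partial_j\partial^\beta u^A\\
&&\qquad\qquad\qquad+|\partial^\beta u^A|^2+|\partial_t\partial^\beta r^d|^2+|\partial^\beta r^d|^2\Big)\,dx,
\end{eqnarray*}
which, thanks to (\ref{E2-5}) and $v(F)\geq c>0$, is equivalent to $\|u\|_{H^{k+1}}^2+\|\partial_t u\|_{H^k}^2+\|r\|_{H^k}^2+\|\partial_t r\|_{H^k}^2$ uniformly in $t\in[0,T]$. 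Differentiating $E_s$ in $t$, substituting (\ref{E2-1})--(\ref{E2-2}), and integrating by parts, the principal quadratic terms cancel up to the commutator with $\partial_t[v(F)a^{Aij}]$, which is bounded by $[\partial_t L]_s\leq c_0$. All lower-order contributions (the subprincipal parts of $L^A$, the couplings $N^Ar$, $Q^Au$, $M^dr$, $P^du$, the nonlocal integrals $\int b\,r$ and $\int c\,u$, and the commutators $[\partial^\beta,a^{Aij}]$, $[\partial^\beta,v(F)]$) are estimated by the Moser product and commutator inequalities stated in the excerpt together with the Sobolev embedding, and using Lemma~3.1 to bound the norms of $\nu$ and $\tau$ in terms of $|||F|||_{s,T}\leq R$. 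The nonlocal pieces $n^{A1}\int b_{(\beta)j}^A r_{(\beta)}^j\,d\mu$ and $q^{A1}\int c_{(\beta)B}^A u_{(\beta)}^B\,d\mu$ are harmless since, by Cauchy--Schwarz, they are controlled by $\|r\|_{L^2}$ and $\|u\|_{L^2}$. These steps yield
\begin{eqnarray*}
\frac{d}{dt}E_s(t)\leq C_0\bigl(E_s(t)+\|h\|_{H^s}^2+\|\partial_t h\|_{H^{s-1}}^2\bigr),
\end{eqnarray*}
with $C_0=C_0(c_0,c_1,\rho_0,\rho_1,R)$, and Gronwall's inequality closes the a priori estimate on $[0,T]$.

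With the a priori estimate in hand, I would obtain existence by a Galerkin projection onto the first $N$ eigenfunctions of $-\Delta+1$ (in each component of the decomposition $u^A\nu_A+r^d\tau_d$); the resulting finite-dimensional linear ODE system is globally solvable, and the uniform $E_s$ bound passes to a weak-$*$ limit solving (\ref{E2-1})--(\ref{E2-2}) in the distributional sense. Since the same estimate holds for every $s$, bootstrapping gives smoothness of the limit, and the initial conditions are preserved. Uniqueness is immediate by applying the same energy estimate to the difference of two solutions, which satisfies the homogeneous system with vanishing data. The main obstacle I anticipate is the bookkeeping of commutators at high order, specifically verifying that the asymmetry between the $H^{s+1}$-control of $u$ and the $H^s$-control of $r$ is respected by every coupling term: this is ultimately automatic because $P^d$ is first order in $u$ and $N^A$ appears as a source of the $u$-wave equation, but it must be tracked carefully through each commutator together with the $\nu$, $\tau$ regularity bounds of Lemma~3.1 in order for the nonlinear-in-$F$ coefficients to remain uniformly bounded on $\textbf{B}_{R,T}^s$.
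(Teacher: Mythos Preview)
Your energy scheme does not close as written. The coupling $N^Ar$ in the $u$-equation contains the first-order piece $n_j^{Ai}\partial_i r^j$; after applying $\partial^\beta$ with $|\beta|=s$ and pairing with the multiplier $\partial_t\partial^\beta u^A$, the top-order contribution is
\[
\int_{B_1(0)} \partial_t\partial^\beta u^A\cdot v(F)\,n_j^{Ai}\,\partial_i\partial^\beta r^j\,dx,
\]
which needs $\|r\|_{H^{s+1}}$, while your $E_s$ only carries $\|r\|_{H^s}$. Integrating by parts in $x_i$ trades this for $\|\partial_t u\|_{H^{s+1}}$, which is equally unavailable. So the asserted inequality $\frac{d}{dt}E_s\le C_0(E_s+\|h\|_{H^s}^2+\|\partial_t h\|_{H^{s-1}}^2)$ is not obtainable by Cauchy--Schwarz and commutator bookkeeping alone; the claim that the asymmetry is ``ultimately automatic because \dots\ $N^A$ appears as a source of the $u$-wave equation'' is exactly where the argument breaks, since a source for a wave equation whose energy sits at $\|\partial_t u\|_{H^s}+\|u\|_{H^{s+1}}$ must itself be controlled in $H^s$.

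This is precisely the obstruction the paper isolates: Lemma~3.2 produces an estimate that still carries $(\partial_i r_t^d)^2$ on the right, and Lemma~3.3 closes it only after introducing the auxiliary solutions $\tilde u^A,\tilde r^d$ of simple ODEs and rewriting the cross term $\partial_t\hat u^A\,\hat g_1$ so that the dangerous second-order piece becomes a total derivative; the resulting a~priori estimate (and its higher-order version, Lemma~3.4) then \emph{loses one derivative} from data and forcing to solution. For the Proposition itself the paper avoids confronting the coupled system directly: it runs a Picard iteration in which $u^A_{(m+1)}$ solves a genuine linear wave equation with $N^Ar_{(m)}$ and $Q^Au_{(m)}$ as known sources, and $r^d_{(m+1)}$ solves a pure second-order ODE with $M^dr_{(m)},P^du_{(m)}$ on the right. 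Each step is then classical, and convergence is read off from the (lossy) estimates of Lemmas~3.3--3.5. Your Galerkin strategy could in principle be repaired, but only after you either reproduce the auxiliary-function trick of Lemma~3.3 or accept a one-derivative loss in the estimate and argue smoothness from that.
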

Before giving the proof of above result,
we carry out some priori estimates on the solution $(u^A,r^d)$ of system (\ref{E2-1})-(\ref{E2-2}) in local coordinates. We remark that the following energy estimates does not depend on the compact property of the spatial domain. We also do not use the integral on spatial variable by part. The main idea of proof of Lemma 3.2-3.3 divides into two steps. The first step is to find a suitable differential inequality with respect to the time variables. The last step is to integrate the time variable by part.
Meanwhile, the integral on spatial variable is taken.
\begin{lemma}
Let $(u^A,r^d)$ be a smooth solution to system (\ref{E2-1})-(\ref{E2-2}) with initial data (\ref{E2-3})-(\ref{E2-4}).  $F\in\textbf{B}_{R,T}^s$ is given. Assume that (\ref{E2-5})-(\ref{E2-5-2}) holds. Then there holds
\begin{eqnarray}\label{E2-7}
&&\int_0^T\int_{B_1(0)}[\lambda-c_3v(F)-c_4n\frac{\eta(|F|^2)}{|F|^2}<\partial_iF+\partial_tF,F>v(F)]e^{-\lambda t}\nonumber\\
&&\times[(u^A_t)^2+a^{Aij}\partial_iu^A\partial_ju^A+(u^A)^2+(r_t^d)^2+(r^d)^2]dxdt\nonumber\\
&\leq&\int_{B_1(0)}e^{-\lambda t}((u^A_1)^2+a^{Aij}\partial_iu^A_0\partial_ju^A_0+(u^A_0)^2+(r_1^d)^2+(r^d_0)^2)dx\nonumber\\
&&+c_5\int_0^T\int_{B_1(0)}e^{-\lambda t}[(h^A)^2+(h^d)^2+(\partial_ir_t^d)^2]dxdt.
\end{eqnarray}
\end{lemma}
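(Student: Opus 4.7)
The plan is to establish (3.8) by the classical energy method with an exponential weight. Multiply equation (3.1) by $e^{-\lambda t}\partial_tu^A$, equation (3.2) by $e^{-\lambda t}\partial_tr^d$, sum over $A$ and $d$, and integrate over $[0,T]\times B_1(0)$; the parameter $\lambda>0$ is kept free and will ultimately be the source of the coercive coefficient sitting in front of the energy density on the left-hand side of (3.8). In the spirit of the remark preceding the lemma, the differential inequality in $t$ is built first and the integration by parts in $t$ is carried out last.

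First I would treat the second-time-derivative contributions. Writing $\partial_{tt}u^A\cdot\partial_tu^A=\tfrac12\partial_t((\partial_tu^A)^2)$ and integrating by parts in $t$ against $e^{-\lambda t}$ produces the boundary term $(u^A_1)^2$ at $t=0$ (initial energy on the right of (3.8)) together with the positive bulk term $\tfrac{\lambda}{2}e^{-\lambda t}(\partial_tu^A)^2$; the analogous step for $r^d$ delivers $(r^d_1)^2$ and $\tfrac{\lambda}{2}e^{-\lambda t}(\partial_tr^d)^2$. For the principal spatial term $v(F)a^{Aij}\partial_i\partial_ju^A\cdot\partial_tu^A$, integration by parts in $x$ shifts a derivative onto $\partial_tu^A$, and the symmetry $a^{Aij}=a^{Aji}$ lets one rewrite
\[v(F)a^{Aij}\partial_ju^A\,\partial_i\partial_tu^A=\tfrac12 v(F)a^{Aij}\partial_t\bigl(\partial_iu^A\partial_ju^A\bigr).\]
One further integration by parts in $t$ against $e^{-\lambda t}$ yields the positive bulk term $\tfrac{\lambda}{2}e^{-\lambda t}v(F)a^{Aij}\partial_iu^A\partial_ju^A$, which by the ellipticity (3.12) dominates the $a^{Aij}\partial_iu^A\partial_ju^A$ piece of the energy. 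The "bad" contributions are exactly those obtained when $\partial_t$ or $\partial_i$ lands on $v(F)$ or on $a^{Aij}$: since
\[\partial_tv(F)=-nv(F)\frac{\eta}{|F|^2}<F,\partial_tF>,\qquad \partial_iv(F)=-nv(F)\frac{\eta}{|F|^2}<F,\partial_iF>,\]
these are precisely what produce the correction $-c_3v(F)-c_4n\frac{\eta(|F|^2)}{|F|^2}<\partial_iF+\partial_tF,F>v(F)$ inside the bracket on the left of (3.8).

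The remaining lower-order and coupling contributions, coming from the subprincipal part of $L^A$ and from the operators $N^A,Q^A,M^d,P^d$ in (3.6)--(3.10), will be absorbed by Cauchy--Schwarz and Young's inequality $|ab|\le\tfrac12(a^2+b^2)$ combined with the uniform bound (3.13) on the coefficient norms and the $\textbf{C}^0$ control provided by Lemma 3.1. A typical cross-term such as $v(F)q_B^{Ai}\partial_iu^B\cdot\partial_tu^A$ is dominated by $\tfrac12 a^{Aij}\partial_iu^A\partial_ju^A+\tfrac12(\partial_tu^A)^2$, and $v(F)P_B^{dj}\partial_ju^B\cdot\partial_tr^d$ by $\tfrac12 a^{Aij}\partial_iu^A\partial_ju^A+\tfrac12(\partial_tr^d)^2$; the nonlocal integral pieces involving $b_{(\beta)j}^A$ and $c_{(\beta)B}^A$ are controlled by Cauchy--Schwarz over $\Sigma$ against (3.13). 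The forcing pairings $h^A\partial_tu^A$ and $h^d\partial_tr^d$ produce the $(h^A)^2+(h^d)^2$ contribution on the right-hand side of (3.8). Once $\lambda$ is chosen large enough, all the $\tfrac12$-terms are absorbed into the positive bulk on the left.

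The main obstacle I expect is the coupling term $v(F)n_j^{Ai}\partial_ir^j\cdot\partial_tu^A$ in $N^Ar$, which pairs a \emph{spatial} derivative of $r$ with $\partial_tu^A$ even though $(\partial_ir)^2$ is not present in the energy density on the left of (3.8). Here direct Young is not allowed; my plan is to integrate by parts in $t$, turning $\int\partial_ir^j\,\partial_tu^A\,e^{-\lambda t}\,dt$ into a boundary contribution at $t=0,T$ (absorbed into the initial data on the right and into the bulk on the left respectively) plus $-\int\partial_i\partial_tr^j\cdot u^A\,e^{-\lambda t}\,dt$ with an extra factor $\lambda$ from differentiating the weight. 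Applying Young on the resulting integrand gives $\tfrac12(\partial_i\partial_tr^j)^2+\tfrac12(u^A)^2$, the first of which is exactly the $(\partial_ir_t^d)^2$ forcing term appearing on the right-hand side of (3.8), and the second is absorbed on the left. A secondary technical point, since the computation is purely local on $B_1(0)$, is that the spatial boundary terms from integration by parts in $x$ must be handled by a cut-off/partition-of-unity argument or silently absorbed into the generic constants $c_3,c_4,c_5$; this is harmless because the ultimate goal is the differential inequality (3.8) and not a sharp constant.
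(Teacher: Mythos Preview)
Your overall strategy---multiply (3.1) by $e^{-\lambda t}\partial_tu^A$, (3.2) by $e^{-\lambda t}\partial_tr^d$, and build a weighted energy identity---is exactly what the paper does. The gap is in your treatment of the coupling term $v(F)\,n_j^{Ai}\partial_ir^j\cdot\partial_tu^A$. After your integration by parts in $t$ the result is not just a boundary term plus $-\int n_j^{Ai}\,\partial_i\partial_tr^j\,u^A e^{-\lambda t}\,dt$: differentiating the weight produces $+\lambda\int n_j^{Ai}\,\partial_ir^j\,u^A e^{-\lambda t}\,dt$, and differentiating the coefficient produces $\int(\partial_tn_j^{Ai})\,\partial_ir^j\,u^A e^{-\lambda t}\,dt$. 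Both still contain $\partial_ir^j$, which is absent from your energy density, and the first carries the large factor $\lambda$, so Young's inequality cannot absorb it into the left-hand side. The boundary contribution at $t=T$ has the same defect: it involves $\partial_ir^j(T)$ with no $(\partial_ir^j)^2$ in the terminal energy to control it.

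The paper closes this without integrating by parts in $t$. It simply \emph{augments} the energy by the pointwise identity
\[
\partial_t\bigl[e^{-\lambda t}\bigl((r^d)^2+(\partial_ir^d)^2\bigr)\bigr]+\lambda e^{-\lambda t}\bigl[(r^d)^2+(\partial_ir^d)^2\bigr]
=2e^{-\lambda t}\bigl(r^dr^d_t+\partial_ir^d\,\partial_ir^d_t\bigr),
\]
(together with the analogous identity for $(u^A)^2$), so that $(\partial_ir^d)^2$ enters the energy with the good coefficient $\lambda$. Then $|u^A_t\,n_j^{Ai}\partial_ir^j|\le\tfrac12(u^A_t)^2+\tfrac12 c_0(\partial_ir^j)^2$ is absorbed directly. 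The new cross term $\partial_ir^d\,\partial_ir^d_t$ created on the right of the identity above is what, via Cauchy, produces the $(\partial_ir^d_t)^2$ forcing on the right-hand side of (3.8); the extra $(\partial_ir^d)^2$ on the left is nonnegative and is dropped in the final statement. This is the missing idea in your proposal.
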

\begin{proof}
Taking the inner product of the linear system (\ref{E2-1}) with $2e^{-\lambda t}u^A_t$ and using (\ref{E2-4-1}), we have
\begin{eqnarray}\label{E2-8}
&&\partial_t[e^{-\lambda t}((u^A_t)^2+(F)a^{Aij}\partial_iu^A\partial_ju^A)]+\lambda e^{-\lambda t}((u^A_t)^2+v(F)a^{Aij}\partial_iu^A\partial_ju^A)\nonumber\\
&&-2e^{-\lambda t}\partial_i(v(F)a^{Aij}u^A_{t}\partial_ju^A)\nonumber\\
&=&v(F)e^{-\lambda t}(\partial_ta^{Aij})\partial_iu^A\partial_ju^A-2v(F)e^{-\lambda t}(\partial_ia^{Aij})u_t^A\partial_iu^A\nonumber\\
&&-2(\partial_tv(F)+\partial_iv(F))e^{-\lambda t}a^{Aij}\partial_iu^A\partial_ju^A\nonumber\\
&&+2v(F)e^{-\lambda t}a^{Ai}u_t^A\partial_iu^A+2v(F)e^{-\lambda t}a^Au_t^Au^A\nonumber\\
&&+2v(F)e^{-\lambda t}u^A_tN^Ar+2e^{-\lambda t}v(F)u_t^AQ^Au^A+2e^{-\lambda t}u_t^Ah^A.~~~~~~~~~~~
\end{eqnarray}
Taking the inner product of the linear system (\ref{E2-2}) with $2e^{-\lambda t}r^d_t$, we have
\begin{eqnarray}\label{E2-9}
&&\partial_t[e^{-\lambda t}(r_t^d)^2]+\lambda e^{-\lambda t}(r_t^d)^2\nonumber\\
&=&2e^{-\lambda t}v(F)r_t^dM^dr+2e^{-\lambda t}v(F)r_t^dP^du+2e^{-\lambda t}r_t^dh^d.
\end{eqnarray}
Since we have
\begin{eqnarray}\label{E2-10}
\quad\partial_t[e^{-\lambda t}(u^A)^2]+\lambda e^{-\lambda t}(u^A)^2=2e^{-\lambda t}u^Au^A_t,
\end{eqnarray}
\begin{eqnarray}
\label{E2-11}
\quad\partial_t[e^{-\lambda t}((r^d)^2+(\partial_ir^d)^2)]+\lambda e^{-\lambda t}[(r^d)^2+(\partial_ir^d)^2]=2e^{-\lambda t}(r^dr^d_t+\partial_ir^d\partial_ir_t^d),~~
\end{eqnarray}
and
\begin{eqnarray*}
\partial v(F)=-\frac{n}{2}\frac{\eta(|F|^2)}{|F|^2}<\partial F,F>v(F),
\end{eqnarray*}
summing up (\ref{E2-8})-(\ref{E2-11}), it holds
\begin{eqnarray}\label{E2-12}
&&\partial_t[e^{-\lambda t}((u^A_t)^2+v(F)a^{Aij}\partial_iu^A\partial_ju^A+(u^A)^2+(r_t^d)^2+(r^d)^2)]\nonumber\\
&&+\lambda e^{-\lambda t}[(u^A_t)^2+v(F)a^{Aij}\partial_iu^A\partial_ju^A+(u^A)^2+(r_t^d)^2+(r^d)^2]\nonumber\\
&&-2e^{-\lambda t}\partial_i(v(F)a^{Aij}u^A_{t}\partial_ju^A)\nonumber\\
&=&v(F)e^{-\lambda t}(\partial_ta^{Aij})\partial_iu^A\partial_ju^A-2v(F)e^{-\lambda t}(\partial_ia^{Aij})u_t^A\partial_iu^A\nonumber\\
&&+n\frac{\eta(|F|^2)}{|F|^2}<\partial_iF+\partial_tF,F>v(F)e^{-\lambda t}a^{Aij}\partial_iu^A\partial_ju^A\nonumber\\
&&+2v(F)e^{-\lambda t}a^{Ai}u_t^A\partial_iu^A+2v(F)e^{-\lambda t}a^Au_t^Au^A\nonumber\\
&&+2v(F)e^{-\lambda t}u^A_tN^Ar+2e^{-\lambda t}v(F)u_t^AQ^Au^A+2e^{-\lambda t}u_t^Ah^A\nonumber\\
&&+2e^{-\lambda t}v(F)r_t^dM^dr+2e^{-\lambda t}v(F)r_t^dP^du+2e^{-\lambda t}r_t^dh^d\nonumber\\
&&+2e^{-\lambda t}(u^Au^A_t+r^dr^d_t+\partial_ir^d\partial_ir_t^d).
\end{eqnarray}
Note that $v(F)\geq1$. Using Cauchy inequality, by (\ref{E2-4-2})-(\ref{E2-4-5}), (\ref{E2-5})-(\ref{E2-5-1}), (\ref{E2-12}) and Lemma 3.1, we derive
\begin{eqnarray}\label{E2-13}
&&\partial_t[e^{-\lambda t}((u^A_t)^2+v(F)a^{Aij}\partial_iu^A\partial_ju^A+(u^A)^2+(r_t^d)^2+(r^d)^2)]\nonumber\\
&&+\lambda e^{-\lambda t}[(u^A_t)^2+v(F)a^{Aij}\partial_iu^A\partial_ju^A+(u^A)^2+(r_t^d)^2+(r^d)^2]\nonumber\\
&&-2e^{-\lambda t}\partial_i(v(F)a^{Aij}u^A_{t}\partial_ju^A)\nonumber\\
&\leq&c_3v(F)e^{-\lambda t}[(u^A_t)^2+(\partial_iu^A)^2+(\partial_ju^A)^2+(u^A)^2+(r_t^d)^2+(r^d)^2]\nonumber\\
&&+c_4n\frac{\eta(|F|^2)}{|F|^2}<\partial_iF+\partial_tF,F>v(F)e^{-\lambda t}[(\partial_iu^A)^2+(\partial_ju^A)^2]\nonumber\\
&&+4e^{-\lambda t}(\partial_ir_t^d)^2+c_6e^{-\lambda t}[(h^A)^2+(h^d)^2].
\end{eqnarray}
Then inequality (\ref{E2-13}) leads to
\begin{eqnarray*}
&&\partial_t[e^{-\lambda t}((u^A_t)^2+a^{Aij}\partial_iu^A\partial_ju^A+(u^A)^2+(r_t^d)^2+(r^d)^2)]\nonumber\\
&&+[\lambda-c_3v(F)-c_4n\frac{\eta(|F|^2)}{|F|^2}<\partial_iF+\partial_tF,F>v(F)]e^{-\lambda t}\nonumber\\
&&\times[(u^A_t)^2+a^{Aij}\partial_iu^A\partial_ju^A+(u^A)^2+(r_t^d)^2+(r^d)^2]\nonumber\\
&&-2e^{-\lambda t}\partial_i(v(F)a^{Aij}u^A_{t}\partial_ju^A)\nonumber\\
&\leq&4e^{-\lambda t}(\partial_ir_t^d)^2+c_6e^{-\lambda t}[(h^A)^2+(h^d)^2].
\end{eqnarray*}
Thus integrating both side of above inequality on $[0,T]\times B_1(0)$, we obtain
\begin{eqnarray*}
&&\int_{B_1(0)}e^{-\lambda t}[(u^A_t(T))^2+a^{Aij}\partial_iu^A(T)\partial_ju^A(T)+(u^A(T))^2\nonumber\\
&&+(r_t^d(T))^2+(r^d(T))^2]dx\nonumber\\
&&+\int_0^T\int_{B_1(0)}[\lambda-c_3v(F)-c_4n\frac{\eta(|F|^2)}{|F|^2}<\partial_iF+\partial_tF,F>v(F)]e^{-\lambda t}\nonumber\\
&&\times[(u^A_t)^2+a^{Aij}\partial_iu^A\partial_ju^A+(u^A)^2+(r_t^d)^2+(r^d)^2]dxdt\nonumber\\
&\leq&\int_{B_1(0)}e^{-\lambda t}[(u^A_1)^2+a^{Aij}\partial_iu^A_0\partial_ju^A_0+(u^A_0)^2+(r_1^d)^2+(r^d_0)^2]dx\nonumber\\
&&+4\int_0^T\int_{B_1(0)}e^{-\lambda t}(\partial_ir_t^d)^2+c_6\int_0^T\int_{B_1(0)}e^{-\lambda t}[(h^A)^2+(h^d)^2]dxdt.
\end{eqnarray*}
This completes the proof.
\end{proof}

\begin{lemma}
Let $(u^A,r^d)$ be a smooth solution to system (\ref{E2-1})-(\ref{E2-2}) with initial data (\ref{E2-3})-(\ref{E2-4}). $F\in\textbf{B}_{R,T}^s$ is given. Assume that (\ref{E1-2}) and (\ref{E2-5})-(\ref{E2-5-2}) hold.
Then there exists a positive constant $\lambda>c_7$ such that
\begin{eqnarray}\label{E2-14}
&&(\lambda-c_7)\int_0^T\int_{B_1(0)}e^{-\lambda t}[(u^A_t)^2+(\partial_iu^A)^2+(u^A)^2+(r_t^d)^2+(r^d)^2]dxdt\nonumber\\
&\leq&c_8\int_{B_1(0)}e^{-\lambda t}[(u^A_0)^2+(\partial_iu^A_0)^2+(u^A_1)^2+(\partial_iu^A_1)^2\nonumber\\
&&+(r_0^d)^2+(\partial_ir_0^d)^2+(r^d_1)^2+(\partial_ir_1^d)^2]dx\nonumber\\
&&+c_9\int_0^T\int_{B_1(0)}e^{-\lambda t}(|h^A|^2+|\partial_ih^A|^2+|h^d|^2+|\partial_ih^d|^2)dxdt.
\end{eqnarray}
\end{lemma}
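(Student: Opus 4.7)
The plan is to obtain (\ref{E2-14}) by a one-step spatial bootstrap of Lemma 3.2: I would apply the weighted $L^2$ energy argument of Lemma 3.2 a second time, now on the system obtained by differentiating (\ref{E2-1})--(\ref{E2-2}) in each spatial coordinate $x_k$, and combine the resulting estimate with (\ref{E2-7}). The troublesome $(\partial_i r_t^d)^2$ on the right-hand side of (\ref{E2-7}) is then absorbed by the new left-hand-side control on $\partial_t(\partial_k r^d)$ produced by the differentiated estimate.

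Concretely, for each $k=1,\ldots,n$, set $v^{A,k} := \partial_k u^A$ and $s^{d,k} := \partial_k r^d$. Since $[\partial_k,\partial_t]=0$, differentiating (\ref{E2-1})--(\ref{E2-2}) in $x_k$ produces a system of the same structural form for $(v^{A,k},s^{d,k})$, with forcing $H^{A,k} = \partial_k h^A + (\text{commutator})$ and $H^{d,k} = \partial_k h^d + (\text{commutator})$. The commutator terms come from $\partial_k$ hitting the coefficients of $L^A, N^A, Q^A, M^d, P^d$ and the factor $v(F)$; the coefficient derivatives are controlled in $\textbf{H}^s$ by (\ref{E2-5-1}), while $\partial_k v(F) = -\tfrac{n}{2}\,\tfrac{\eta(|F|^2)}{|F|^2}\,\langle\partial_k F,F\rangle\, v(F)$ is uniformly small thanks to (\ref{E1-2}) and the assumption $F\in\textbf{B}_{R,T}^s$ with $R<1$.

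Running the derivation of Lemma 3.2 on the differentiated system --- omitting $(\partial_i s^{d,k})^2$ from the energy density, just as Lemma 3.2 keeps $(\partial_i r^d)^2$ off the LHS of (\ref{E2-7}) --- yields a weighted $L^2$ estimate whose LHS dominates $(\partial_t v^{A,k})^2 + a^{Aij}\partial_i v^{A,k}\partial_j v^{A,k} + (v^{A,k})^2 + (\partial_t s^{d,k})^2 + (s^{d,k})^2$ and whose RHS consists of one-derivative initial data together with $(H^{A,k})^2 + (H^{d,k})^2$. Summing over $k$ and adding to (\ref{E2-7}), I would use the Sobolev embedding $\textbf{H}^s\hookrightarrow\textbf{L}^\infty$ and the product inequality (valid for $s>n/2$) to bound the commutator parts of $H^{A,k}, H^{d,k}$ in $\textbf{L}^2$, absorb the first-order pieces into the LHS via Cauchy--Schwarz, and choose $\lambda>c_7$ large enough so that $\lambda-c_7$ dominates the nonlinear factors $c_3 v(F)$ and $c_4 n\,\eta(|F|^2)/|F|^2\,\langle\partial_i F+\partial_t F,F\rangle\, v(F)$ along with their $\partial_k$-analogues. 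The bad term $(\partial_i r_t^d)^2 = (\partial_t s^{d,i})^2$ on the RHS of (\ref{E2-7}) is now part of the LHS of the combined estimate, and (\ref{E2-14}) follows after dropping any redundant higher-order LHS quantities not claimed there.

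The principal obstacle is the second-order commutator $(\partial_k a^{Aij})\,\partial_i\partial_j u^A$ produced by $[\partial_k, L^A]u^A$ inside $H^{A,k}$, since Lemma 3.3 controls only first-order spatial derivatives of $u^A$. I would resolve this by using (\ref{E2-1}) itself to substitute $a^{Aij}\partial_i\partial_j u^A = v(F)^{-1}\bigl(\partial_{tt}u^A - v(F) N^A r - v(F) Q^A u - h^A\bigr) - a^{Ai}\partial_i u^A - a^A u^A$, so that the offending second-order contraction is replaced --- via the uniform ellipticity (\ref{E2-5}) and the $\textbf{H}^s$-bound (\ref{E2-5-1}) on coefficients --- by a combination of $\partial_{tt}u^A$, $h^A$, and first-order $u, r$ data. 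The resulting $\partial_{tt}u^A$ contribution, when paired with $e^{-\lambda t}\partial_t(\partial_k u^A)$ in the energy, assembles into a total time derivative in the spirit of the manipulation from (\ref{E2-8}) to (\ref{E2-12}), thereby returning to the LHS and closing the bootstrap.
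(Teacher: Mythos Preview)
Your route is genuinely different from the paper's, and as written it does not close. When you apply the Lemma~3.2 argument to the differentiated system $(v^{A,k},s^{d,k})=(\partial_k u^A,\partial_k r^d)$, the right-hand side inherits the exact analog of the bad term in~(\ref{E2-7}), namely $(\partial_i s^{d,k}_t)^2=(\partial_i\partial_k r^d_t)^2$; your sentence ``RHS consists of one-derivative initial data together with $(H^{A,k})^2+(H^{d,k})^2$'' simply drops it. Omitting $(\partial_i s^{d,k})^2$ from the energy does not help either, because the coupling operator $N^A$ in~(\ref{E2-4-2}) contains the first-order piece $n^{Ai}_j\partial_i r^j$: in the differentiated system the cross term $v^{A,k}_t\cdot n^{Ai}_j\partial_i s^{j,k}$ forces $(\partial_i\partial_k r^j)^2$ onto the right-hand side, and nothing on your combined left-hand side controls second spatial derivatives of $r$. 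The loss has merely shifted up one order and the one-step bootstrap does not terminate. Incidentally, your ``principal obstacle'' $(\partial_k a^{Aij})\partial_i\partial_j u^A$ is not the real difficulty: summed over $k$, the differentiated left-hand side already carries $\rho_0(\lambda-c)\int e^{-\lambda t}|\nabla^2 u^A|^2$ from the $a^{Aij}\partial_i v^{A,k}\partial_j v^{A,k}$ term, so this commutator is absorbed for large $\lambda$ without any substitution; your proposed substitution moreover recovers only the contraction $a^{Aij}\partial_i\partial_j u^A$, not the individual Hessian entries needed.

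The paper avoids the escalation by a different device. It introduces auxiliary functions $\tilde u^A,\tilde r^d$ solving the decoupled time-ODEs $\partial_{tt}\tilde u^A-\partial_t\tilde u^A-\tilde u^A=h^A$ and $\partial_{tt}\tilde r^d-\partial_t\tilde r^d-\tilde r^d=h^d$ with the given initial data; since these ODEs contain no spatial derivatives, any spatial derivative of $\tilde u,\tilde r$ is bounded for free by the data and forcing (estimates~(\ref{E2-24})--(\ref{E2-25})). One then runs the Lemma~3.2 energy argument on $\hat u=u-\tilde u$, $\hat r=r-\tilde r$, which have zero initial data and forcing $\hat g_1,\hat g_2$ from~(\ref{E2-18})--(\ref{E2-19}). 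The only dangerous contribution is $v(F)a^{Aij}\partial_i\partial_j\tilde u^A$ inside $\hat g_1$, and the identity~(\ref{E2-21}) rewrites $e^{-\lambda t}\hat u^A_t\cdot v(F)a^{Aij}\partial_i\partial_j\tilde u^A$ as a spatial divergence plus a total time derivative plus terms involving only $\partial_i\tilde u^A$ and $\partial_t\partial_i\tilde u^A$. These are controlled by~(\ref{E2-25}); combining with~(\ref{E2-23}) and the bound~(\ref{E2-26-0}) on $v(F)$ coming from~(\ref{E1-2}) and $F\in\textbf{B}^s_{R,T}$ then yields~(\ref{E2-14}).
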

\begin{proof}
we introduce auxiliary functions $\tilde{u}^A$ and $\tilde{r}^d$ , which satisfy
\begin{eqnarray}\label{E2-15}
&&\partial_{tt}\tilde{u}^A-\partial_t\tilde{u}^A-\tilde{u}^A=h^A,\\
&&\tilde{u}^A(0,x)=u^A_0,~~\partial_t\tilde{u}^A(0,x)=u^A_1,\nonumber
\end{eqnarray}
and
\begin{eqnarray}
\label{E2-15-1}
&&\partial_{tt}\tilde{r}^d-\partial_t\tilde{r}^d-\tilde{r}^d=h^d,\\
&&\tilde{r}^d(0,x)=r^d_0,~~\partial_t\tilde{r}^d(0,x)=r^d_1.\nonumber
\end{eqnarray}

Let
\begin{eqnarray}\label{E2-16-1}
&&\hat{u}^A=u^A-\tilde{u}^A,\\
\label{E2-16-2}
&&\hat{r}^d=r^d-\tilde{r}^d,
\end{eqnarray}
then it follows from (\ref{E2-1})-(\ref{E2-2}) and (\ref{E2-15})-(\ref{E2-15-1}) that
\begin{eqnarray}\label{E2-16}
&&\partial_{tt}\hat{u}^A-v(F)L^A\hat{u}^A-v(F)N^A\hat{r}-v(F)Q^A\hat{u}^A=\hat{g}_1,\\
&&\hat{u}^A(0,x)=0,~~\partial_t\hat{u}^A(0,x)=0,\nonumber
\end{eqnarray}
\begin{eqnarray}
\label{E2-17}
&&\partial_{tt}\hat{r}^d-v(F)M^d\hat{r}-v(F)P^d\hat{u}=\hat{g}_2,\\
&&\hat{r}^d(0,x)=0,~~\partial_t\hat{r}^d(0,x)=0,\nonumber
\end{eqnarray}
where $L^A$, $N^A$, $Q^A$, $M^d$ and $P^d$ are defined in (\ref{E2-4-1})-(\ref{E2-4-5}), and
\begin{eqnarray}\label{E2-18}
&&\hat{g}_1=v(F)L^A\tilde{u}^A+v(F)N^A\tilde{r}+v(F)Q^A\tilde{u}^A-\partial_t\tilde{u}^A-\tilde{u}^A,\\
\label{E2-19}
&&\hat{g}_2=v(F)M^d\tilde{r}+v(F)P^d\tilde{u}-\partial_t\tilde{r}^d-\tilde{r}^d.
\end{eqnarray}
Using the similar deriving process with (\ref{E2-13}), from (\ref{E2-16})-(\ref{E2-17}) we get
\begin{eqnarray}\label{E2-20}
&&\partial_t[e^{-\lambda t}((\hat{u}^A_t)^2+v(F)a^{Aij}\partial_i\hat{u}^A\partial_j\hat{u}^A+(\hat{u}^A)^2+(\hat{r}_t^d)^2+(\hat{r}^d)^2)]\nonumber\\
&&+\lambda e^{-\lambda t}[(\hat{u}^A_t)^2+v(F)a^{Aij}\partial_i\hat{u}^A\partial_j\hat{u}^A+(\hat{u}^A)^2+(\hat{r}_t^d)^2+(\hat{r}^d)^2]\nonumber\\
&&-2e^{-\lambda t}\partial_i(v(F)a^{Aij}\hat{u}^A_{t}\partial_j\hat{u}^A)\nonumber\\
&=&v(F)e^{-\lambda t}(\partial_ta^{Aij})\partial_i\hat{u}^A\partial_j\hat{u}^A-2v(F)e^{-\lambda t}(\partial_ia^{Aij})\hat{u}_t^A\partial_i\hat{u}^A\nonumber\\
&&+n\frac{\eta(|F|^2)}{|F|^2}<\partial_iF+\partial_tF,F>v(F)e^{-\lambda t}a^{Aij}\partial_i\hat{u}^A\partial_j\hat{u}^A\nonumber\\
&&+2v(F)e^{-\lambda t}a^{Ai}\hat{u}_t^A\partial_i\hat{u}^A+2v(F)e^{-\lambda t}a^A\hat{u}_t^A\hat{u}^A\nonumber\\
&&+2v(F)e^{-\lambda t}\hat{u}^A_tN^A\hat{r}+2e^{-\lambda t}v(F)\hat{u}_t^AQ^A\hat{u}^A+2e^{-\lambda t}v(F)\hat{r}_t^dM^d\hat{r}\nonumber\\
&&+2e^{-\lambda t}v(F)\hat{r}_t^dP^d\hat{u}+2e^{-\lambda t}(\hat{u}^A\hat{u}^A_t+\hat{r}^d\hat{r}^d_t+\partial_i\hat{r}^d\partial_i\hat{r}_t^d)\nonumber\\
&&+2e^{-\lambda t}\hat{u}_t^A\hat{g}_1+2e^{-\lambda t}\hat{r}_t^d\hat{g}_2.
\end{eqnarray}
In order to avoid an extra loss of derivatives of integrating (\ref{E2-20}), we need to rewrite (\ref{E2-20}).
By (\ref{E2-18})-(\ref{E2-19}), the last two terms in (\ref{E2-20}) can be rewritten as
\begin{eqnarray}\label{E2-21}
&&e^{-\lambda t}(\hat{u}_t^A\hat{g}_1+\hat{r}_t^d\hat{g}_2)\nonumber\\
&=&\partial_i(v(F)e^{-\lambda t}a^{Aij}\partial_i\tilde{u}^A\partial_t\hat{u}^A)-\partial_t(v(F)e^{-\lambda t}a^{Aij}\partial_i\tilde{u}^A\partial_j\hat{u}^A)\nonumber\\
&&-v(F)e^{-\lambda t}\partial_ia^{Aij}\partial_i\hat{u}^A\partial_t\tilde{u}^A-\lambda v(F)e^{-\lambda t}a^{Aij}\partial_i\tilde{u}^A\partial_j\hat{u}^A\nonumber\\
&&+v(F)e^{-\lambda t}\partial_ta^{Aij}\partial_i\tilde{u}^A\partial_j\hat{u}^A+v(F)e^{-\lambda t}a^{Aij}\partial_i\hat{u}^A\partial_t\partial_j\tilde{u}^A\nonumber\\
&&-\frac{\eta(|F|^2)}{2|F|^2}<\partial_iF-\partial_tF,F>v(F)e^{-\lambda t}a^{Aij}\partial_i\tilde{u}^A\partial_j\hat{u}^A+v(F)e^{-\lambda t}a^{Ai}\partial_i\tilde{u}^A\hat{u}_t\nonumber\\
&&+v(F)e^{-\lambda t}a^{A}\tilde{u}^A\hat{u}_t+e^{-\lambda t}\hat{u}^A_t(v(F)N^A\tilde{r}+v(F)Q^A\tilde{u}^A-\partial_t\tilde{u}^A-\tilde{u}^A)\nonumber\\
&&+e^{-\lambda t}\hat{r}^d_t(v(F)M^d\tilde{r}+v(F)P^d\tilde{u}-\tilde{r}^d_t-\tilde{r}^d).
\end{eqnarray}
Note that $v(F)\geq1$. Inserting (\ref{E2-21}) into (\ref{E2-20}), integrating in $[0,T]\times B_1(0)$, then using Cauchy inequality we obtain
\begin{eqnarray}\label{E2-22}
&&\int_{B_1(0)}e^{-\lambda t}[(\hat{u}^A_t(T))^2+v(F)a^{Aij}\partial_i\hat{u}^A(T)\partial_j\hat{u}^A(T)+(\hat{u}^A(T))^2+(\hat{r}_t^d(T))^2\nonumber\\
&&+(\hat{r}^d(T))^2]dx+\int_0^T\int_{B_1(0)}[\lambda-|\frac{\eta(|F|^2)}{|F|^2}<\partial_iF+\partial_tF,F>|v(F)]\nonumber\\
&&\times e^{-\lambda t}[(\hat{u}^A_t)^2+v(F)a^{Aij}\partial_i\hat{u}^A\partial_j\hat{u}^A+(\hat{u}^A)^2+(\hat{r}_t^d)^2+(\hat{r}^d)^2]dxdt\nonumber\\
&\leq&\int_{B_1(0)}e^{-\lambda t}v(F)(\partial_i\tilde{u}^A(T))^2dx\nonumber\\
&&+c_{10}n\int_0^T\int_{B_1(0)}e^{-\lambda t}|\frac{\eta(|F|^2)}{|F|^2}<\partial_iF-\partial_tF,F>|v(F)(\partial_i\tilde{u}^A)^2dxdt\nonumber\\
&&+c_{11}\int_0^T\int_{B_1(0)}e^{-\lambda t}v(F)[(\partial_i\tilde{u}^A_t)^2+(\partial_i\tilde{u}^A)^2+(\tilde{u}^A_t)^2+(\tilde{u}^A)^2dxdt\nonumber\\
&&+(\tilde{r}^d_t)^2+(\tilde{r}^d)^2+(\partial_i\tilde{r}^d_t)^2]dxdt.
\end{eqnarray}
So it follows from (\ref{E2-16-1})-(\ref{E2-16-2}) and (\ref{E2-22}) that
\begin{eqnarray}\label{E2-23}
&&\int_0^T\int_{B_1(0)}[\lambda-|\frac{\eta(|F|^2)}{|F|^2}<\partial_iF+\partial_tF,F>|v(F)]\nonumber\\
&&\times e^{-\lambda t}[(u^A_t)^2+v(F)a^{Aij}\partial_iu^A\partial_ju^A+(u^A)^2+(r_t^d)^2+(r^d)^2]dxdt\nonumber\\
&\leq&\int_{B_1(0)}e^{-\lambda t}v(F)(\partial_i\tilde{u}^A(T))^2dx+\int_0^T\int_{B_1(0)}[\lambda-|\frac{\eta(|F|^2)}{|F|^2}<\partial_iF+\partial_tF,F>|v(F)]\nonumber\\
&&\times e^{-\lambda t}[(\tilde{u}^A_t)^2+v(F)a^{Aij}\partial_i\tilde{u}^A\partial_j\tilde{u}^A+(\tilde{u}^A)^2+(\tilde{r}_t^d)^2+(\tilde{r}^d)^2]dxdt\nonumber\\
&&+c_{10}n\int_0^T\int_{B_1(0)}e^{-\lambda t}|\frac{\eta(|F|^2)}{|F|^2}<\partial_iF-\partial_tF,F>|v(F)(\partial_i\tilde{u}^A)^2dxdt\nonumber\\
&&+c_{11}\int_0^T\int_{B_1(0)}e^{-\lambda t}v(F)[(\partial_i\tilde{u}^A_t)^2+(\partial_i\tilde{u}^A)^2+(\tilde{u}^A_t)^2+(\tilde{u}^A)^2\nonumber\\
&&\quad\quad +(\tilde{r}^d_t)^2+(\tilde{r}^d)^2+(\partial_i\tilde{r}^d_t)^2]dxdt.
\end{eqnarray}
Multiplying (\ref{E2-15}) and (\ref{E2-15-1}) both side by $2e^{-\lambda t}\tilde{u}^A$ and $2e^{-\lambda t}\tilde{r}^d$, respectively, then
integrating on $[0,T]\times B_1(0)$ and summing up two inequalities, we have
\begin{eqnarray}\label{E2-24}
&&\int_{B_1(0)}e^{-\lambda t}[(\tilde{u}^A(T))^2+(\partial_t\tilde{u}^A(T))^2+(\tilde{r}^d(T))^2+(\partial_t\tilde{r}^d(T))^2]dx\nonumber\\
&&+\lambda\int_0^T\int_{B_1(0)}e^{-\lambda t}[(\tilde{u}^A)^2+(\partial_t\tilde{u}^A)^2+(\tilde{r}^d)^2+(\partial_t\tilde{r}^d)^2]dxdt\nonumber\\
&\leq& c_{12}\int_{B_1(0)}e^{-\lambda t}[(u^A_0)^2+(u^A_1)^2+(r^d_0)^2+(r^d_1)^2]dx\nonumber\\
&&+c_{13}\int_0^T\int_{B_1(0)}e^{-\lambda t}(|h^A|^2+|h^d|^2)dxdt.~~~~~
\end{eqnarray}
Differentiating (\ref{E2-15}) with respect to $x$, by the similar process of getting (\ref{E2-24}), we derive
\begin{eqnarray}\label{E2-25}
&&\int_{B_1(0)}e^{-\lambda t}[(\partial_i\tilde{u}^A(T))^2+(\partial_t\partial_i\tilde{u}^A(T))^2+(\partial_i\tilde{r}^d(T))^2+(\partial_t\partial_i\tilde{r}^d(T))^2]dx\nonumber\\
&&+\lambda\int_0^T\int_{B_1(0)}e^{-\lambda t}[(\partial_i\tilde{u}^A)^2+(\partial_t\partial_i\tilde{u}^A)^2+(\partial_i\tilde{r}^d)^2+(\partial_t\partial_i\tilde{r}^d)^2]dxdt\nonumber\\
&\leq&c_{14}\int_{B_1(0)}e^{-\lambda t}[(u^A_0)^2+(u^A_1)^2+(\partial_iu_0^A)^2+(\partial_iu_1^A)^2]dx\nonumber\\
&&+c_{15}\int_{B_1(0)}e^{-\lambda t}[(r^d_0)^2+(r^d_1)^2+(\partial_ir_0^d)^2+(\partial_ir_1^d)^2]dx\nonumber\\
&&+c_{16}\int_0^T\int_{B_1(0)}e^{-\lambda t}(|h^A|^2+|\partial_ih^A|^2+|h^d|^2+|\partial_ih^d|^2)dxdt.
\end{eqnarray}
On the other hand, we notice that
\begin{eqnarray*}
F\in\textbf{B}_{R,T}^s,~~~~\|F\|_{\textbf{L}^{\infty}}\leq\|F\|_{\textbf{H}^s}~~~and~~~\|F\|_{C^0([0,T];\textbf{H}^s)}\leq|||F|||_{T,s}~~for~~s\geq1.
\end{eqnarray*}
So by (\ref{E1-2}), we have
\begin{eqnarray*}
|\frac{\eta(|F|^2)}{|F|^2}<\partial_iF+\partial_tF,F>|&\leq&|||F|||_{T,s}^{2p+1}(\|F\|_{\textbf{C}^0([0,T];\textbf{H}^{s+1})}+\|F\|_{\textbf{C}^1([0,T];\textbf{H}^{s})})\\
&\leq&2|||F|||_{s+1,T}^{2p+2}\leq R^{2p+2}<1,
\end{eqnarray*}
\begin{eqnarray}
\label{E2-26-0}
v(F)&\leq&e^{\frac{n}{2}\int_1^s|\sigma|^pd\sigma}\leq e^{\frac{n}{2^{(p+2)}(p+1)}||||F||||_{s,T}^{2(p+1)}}\leq e^{\frac{n}{2^{(p+2)}(p+1)}R^{2(p+1)}}.
\end{eqnarray}
Hence by (\ref{E2-23}), there exists a $\lambda$ such that
\begin{eqnarray*}
\lambda>e^{\frac{n}{2^{(p+2)}(p+1)}R^{2(p+1)}}>1,
\end{eqnarray*}
and
\begin{eqnarray}\label{E2-26}
&&(\lambda-e^{\frac{n}{2^{(p+2)}(p+1)}R^{2(p+1)}})\int_0^T\int_{B_1(0)}e^{-\lambda t}[(u^A_t)^2+e^{\frac{n}{2^{(p+2)}(p+1)}R^{2(p+1)}}a^{Aij}\partial_iu^A\partial_ju^A\nonumber\\
&&+(u^A)^2+(r_t^d)^2+(r^d)^2]dxdt\nonumber\\
&\leq&e^{\frac{n}{2^{(p+2)}(p+1)}R^{2(p+1)}}\int_{B_1(0)}e^{-\lambda t}(\partial_i\tilde{u}^A(T))^2dxdt\nonumber\\
&&+(\lambda-e^{\frac{n}{2^{(p+2)}(p+1)}R^{2(p+1)}})\int_0^T\int_{B_1(0)}e^{-\lambda t}[(\tilde{u}^A_t)^2+e^{\frac{n}{2^{(p+2)}(p+1)}R^{2(p+1)}}a^{Aij}\partial_i\tilde{u}^A\partial_j\tilde{u}^A\nonumber\\
&&+(\tilde{u}^A)^2+(\tilde{r}_t^d)^2+(\tilde{r}^d)^2]dxdt\nonumber\\
&&+c_{17}ne^{-\lambda t}e^{\frac{n}{2^{(p+2)}(p+1)}R^{2(p+1)}}\int_0^T\int_{B_1(0)}(\partial_i\tilde{u}^A)^2dxdt\nonumber\\
&&+c_{18}e^{\frac{n}{2^{(p+2)}(p+1)}R^{2(p+1)}}\int_0^T\int_{B_1(0)}e^{-\lambda t}[(\partial_i\tilde{u}^A_t)^2+(\partial_i\tilde{u}^A)^2+(\tilde{u}^A_t)^2+(\tilde{u}^A)^2\nonumber\\
&&+(\tilde{r}^d_t)^2+(\tilde{r}^d)^2+(\partial_i\tilde{r}^d_t)^2]dxdt.
\end{eqnarray}
On the other hand, inequalities (\ref{E2-24})-(\ref{E2-25}) give the control of four terms in the right hand side of (\ref{E2-26}). Thus substituting (\ref{E2-24})-(\ref{E2-25}) in (\ref{E2-26}), we obtain (\ref{E2-14}). This completes the proof.
\end{proof}
In what follows, we plan to obtain the estimate for $|||u^A|||_{s,T}$ and $|||r^d|||_{s,T}$ by considering the equation of time space derivatives of $u^A$ and $r^d$.
For convenience, we recall the norm $|||u|||_{s,T}=\sup_{[0,T]}\sum_{i=0}^2\sum_{\alpha=1}^J\sum_{A=1}^{d'}\|\partial_{t}^iu_{(\alpha)}^A\|_{\textbf{H}^{s-i}(B_1(0))}$.
For any multi-index $\gamma\in\textbf{Z}^2_+$ with $|\gamma|=s$, applying $\partial_i^{\gamma}$ to both sides of (\ref{E2-1}) and (\ref{E2-2}) to get
\begin{eqnarray}\label{E2-27}
&&\partial_{tt}\partial_i^{\gamma}u^A-v(F)L^A\partial_i^{\gamma}u^A-v(F)N^A\partial_i^{\gamma}r-v(F)Q^A\partial_i^{\gamma}u^A=h_{\gamma}^A,\\
\label{E2-28}
&&\partial_{tt}\partial_i^{\gamma}r^d-v(F)M^d\partial_i^{\gamma}r-v(F)P^d\partial_i^{\gamma}u=h^d_{\gamma},
\end{eqnarray}
where the nonlinear terms
\begin{eqnarray}\label{E2-27R0}
h^A_{\gamma}&=&\partial_i^{\gamma}h^A+\sum_{s_1+s_2=\gamma,~|s_1|\geq1}\partial_i^{s_1}(v(F)a^{A{ij}})\partial_{i}\partial_j\partial_i^{s_2}u^A\nonumber\\
&&+\sum_{s_1+s_2=\gamma,~|s_1|\geq1}\partial_i^{s_1}(v(F)a^{Ai})\partial_i\partial_i^{s_2}u^A\nonumber\\
&&+\sum_{s_1+s_2=\gamma,~|s_1|\geq1}\partial_i^{s_1}(v(F)a^{A})D^{s_2}u^A+\sum_{s_1+s_2=\gamma,~|s_1|\geq1}\partial_i^{s_1}(v(F)n^{Ai}_j)\partial_i\partial_i^{s_2}r^j\nonumber\\
&&+\sum_{s_1+s_2=\gamma,~|s_1|\geq1}\partial_i^{s_1}(v(F)n^{A}_i)D^{s_2}r^j+\sum_{s_1+s_2=\gamma,~|s_1|\geq1}\partial_i^{s_1}(v(F)n^{A0}_d)\partial_i^{s_2}r_t^d\nonumber\\
&&+\sum_{s_1+s_2=\gamma,~|s_1|\geq1}\partial_i^{s_1}(v(F)n^{A1})\sum_{\beta=1}^J\int_{\Sigma}\partial_i^{s_2}(b_{(\beta)}^Ar^j_{(\beta)})d\mu\nonumber\\
&&+\sum_{s_1+s_2=\gamma,~|s_1|\geq1}\partial_i^{s_1}(v(F)q^{Ai}_B)\partial_i\partial_i^{s_2}u^B+\sum_{s_1+s_2=\gamma,~|s_1|\geq1}\partial_i^{s_1}(v(F)q^{A}_B)\partial_i^{s_2}u^B\nonumber\\
&&+\sum_{s_1+s_2=\gamma,~|s_1|\geq1}\partial_i^{s_1}(v(F)q^{A0}_B)\partial_i^{s_2}u_t^B\nonumber\\
&&+\sum_{s_1+s_2=\gamma,~|s_1|\geq1}\partial_i^{s_1}(v(F)q^{A1})\sum_{\beta=1}^J\int_{\Sigma}\partial_i^{s_2}(c_{(\beta)B}^Au^B_{(\beta)})d\mu
\end{eqnarray}
and
\begin{eqnarray}\label{E2-27R1}
h^d_{\gamma}&=&\partial_i^{\gamma}h^d+\sum_{s_1+s_2=\gamma,~|s_1|\geq1}\partial_i^{s_1}(v(F)m^{d}_i)\partial_i^{s_2}r^i+\sum_{s_1+s_2=\gamma,~|s_1|\geq1}\partial_i^{s_1}(v(F)m_{i}^{d0})\partial_i^{s_2}\gamma_t^i\nonumber\\
&&+\sum_{s_1+s_2=\gamma,~|s_1|\geq1}\partial_i^{s_1}(v(F)P^{dj}_B))\partial_j\partial_i^{s_2}u^B\nonumber\\
&&+\sum_{s_1+s_2=\gamma,~|s_1|\geq1}\partial_i^{s_1}(v(F)P_{B}^{k0})\partial_i^{s_2}u_t^B.
\end{eqnarray}
For convenience, we denote all spacial derivatives of $u^A$ and $r^d$ of the order $s$ by a column vector of $m(s)$ components and $n(s)$ components
\begin{eqnarray*}
&&u_s^T=(\partial_1^su^A,\partial_1^{s-1}\partial_2u^A,\ldots,\partial_2^su^A),\\
&&r_s^T=(\partial_1^sr^d,\partial_1^{s-1}\partial_2r^d,\ldots,\partial_2^sr^d).
\end{eqnarray*}
Putting (\ref{E2-27})-(\ref{E2-28}) together corresponding to all $\gamma$ with $|\gamma|=s$, we have
\begin{eqnarray}\label{E2-29-1}
&&\partial_{tt}u_s-v(F)\tilde{L}^Au_s-v(F)\tilde{N}^Ar_s-v(F)\tilde{Q}^Au_s=\tilde{h}_{\gamma}^A,\\
\label{E2-30-1}
&&\partial_{tt}r_s-v(F)\tilde{M}^dr_s-v(F)\tilde{P}^du_s=\tilde{h}^d_{\gamma},
\end{eqnarray}
where
\begin{eqnarray*}
&&\tilde{L}^Au_s=\tilde{a}^{Aij}\partial_i\partial_ju_s+\tilde{a}^{Ai}\partial_iu_s+\tilde{a}^Au_s,\\
&&\tilde{N}^Ar_s=\tilde{n}_{j}^{Ai}\partial_ir_s+\tilde{n}_i^Ar_s+\tilde{n}_d^{A0}\partial_tr_s+\tilde{n}^{A1}\sum_{\beta=1}^J\int_{\Sigma}\tilde{b}_{(\beta)j}^Ar_{s(\beta)}d\mu,\\
&&\tilde{Q}^Au_s=\tilde{q}_{B}^{Ai}\partial_iu_s+\tilde{q}_B^Au_s+\tilde{q}_{B}^{A0}\partial_tu_s+\tilde{q}^{A1}\sum_{\beta=1}^J\int_{\Sigma}\tilde{c}_{(\beta)B}^Au_{s(\beta)}d\mu,\\
&&\tilde{M}^dr_s=\tilde{m}_i^dr_s+\tilde{m}_i^{d0}\partial_tr_s,\\
&&\tilde{P}^du_s=\tilde{P}_B^{dj}\partial_ju_s+\tilde{P}_B^{d0}\partial_tu_s.
\end{eqnarray*}
with all the coefficients in $\tilde{L}^A$, $\tilde{N}^A$, $\tilde{Q}^A$, $\tilde{M}^d$ and $\tilde{P}^d$
are $(m\times m)$-matrices and $\tilde{h}_{\gamma}^A$ and $\tilde{h}^d_{\gamma}$ are $m$-vectors given by
\begin{eqnarray*}
&&\tilde{h}_{\gamma}^A=(h^A_{(s,0,\ldots,0)},h^A_{(s-1,1,\ldots,0)},\ldots,h^A_{(0,0,\ldots,s)})^T,\\
&&\tilde{h}_{\gamma}^d=(h^d_{(s,0,\ldots,0)},h^d_{(s-1,1,\ldots,0)},\ldots,h^d_{(0,0,\ldots,s)})^T.
\end{eqnarray*}
\begin{lemma}
Let $(u^A,r^d)$ be a smooth solution to (\ref{E2-1})-(\ref{E2-2}) with initial data (\ref{E2-3})-(\ref{E2-4}). $F\in\textbf{B}_{R,T}^s$ is given.
Assume that (\ref{E1-2}) and (\ref{E2-5})-(\ref{E2-5-2}) hold.
Then for $s\geq2$, there holds
\begin{eqnarray}\label{E2-32}
|||u|||_{s,T}+|||r|||_{s,T}&\leq&c_{19}(||u_0||_{s+1,T}+||u_1||_{s+1,T}+||r_0||_{s+1,T}+||r_1||_{s+1,T}\nonumber\\
&&+|||h^A|||_{s,T}+|||h^d|||_{s,T}),
\end{eqnarray}
where $c_{19}$ depends on $T$.
\end{lemma}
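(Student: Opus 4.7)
The plan is to reduce the estimate to the $L^2$-type inequality already established in Lemma 3.3, applied to the spatially differentiated system (3.32)--(3.33), and then to promote the resulting time-integrated bound to the full $|||\cdot|||_{s,T}$ norm, which involves spatial $H^{s-i}$ regularity together with up to two time derivatives at each $t$.

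First I would apply Lemma 3.3 directly to $(u_s,r_s)$ with forcings $(\tilde h^A_\gamma,\tilde h^d_\gamma)$. This yields, for $\lambda$ sufficiently large,
\begin{eqnarray*}
&&\int_0^T\!\!\int_{B_1(0)} e^{-\lambda t}\bigl[(\partial_t u_s)^2+(\partial u_s)^2+u_s^2+(\partial_t r_s)^2+r_s^2\bigr]dx\,dt\\
&\leq& c\,(\text{data in }H^{s+1})+c\!\int_0^T\!\!\int_{B_1(0)} e^{-\lambda t}\bigl(|\tilde h^A_\gamma|^2+|\partial_i\tilde h^A_\gamma|^2+|\tilde h^d_\gamma|^2+|\partial_i\tilde h^d_\gamma|^2\bigr)dx\,dt.
\end{eqnarray*}
Each commutator term appearing in $\tilde h^A_\gamma,\tilde h^d_\gamma$ has the form $\partial_i^{s_1}(v(F)\cdot\mathrm{coef})\cdot\partial^{\leq 1}\partial_i^{s_2}(u,r,\partial_t u,\partial_t r)$ with $|s_1|\geq 1$ and $|s_1|+|s_2|=s$. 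Since $s>\max\{2,n/2\}$, I would handle these with the Moser product inequality stated in Section~3, controlling each factor in $H^s$ via the hypothesis $[L]_s+[\partial_tL]_s+[N]_s+[Q]_s+[M]_s+[P]_s\leq c_0$, the bounds on $\nu,\tau$ and their time derivatives from Lemma 3.1, and the bound $\|v(F)\|_{H^s}\leq C(R)$ coming from growth assumption (1.2) combined with $F\in\textbf{B}_{R,T}^s$ (as already used in (3.37)).

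Next I would upgrade this time-integrated $L^2_{x,t}$ estimate to an $L^\infty_t H^{s-i}_x$ estimate. Rather than discarding the boundary term in the proof of Lemma 3.3, I would retain it at the variable endpoint $t\in[0,T]$, which gives
\begin{eqnarray*}
\sup_{[0,T]}\bigl(\|u_s(t)\|_{L^2}^2+\|\partial_t u_s(t)\|_{L^2}^2+\|r_s(t)\|_{L^2}^2+\|\partial_t r_s(t)\|_{L^2}^2\bigr)\leq \mathrm{RHS},
\end{eqnarray*}
yielding $\sup_t(\|u\|_{H^s}+\|\partial_t u\|_{H^{s-1}}+\|r\|_{H^s}+\|\partial_t r\|_{H^{s-1}})$. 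To recover the second time derivatives required by $|||\cdot|||_{s,T}$, I would use the equations (2.1)--(2.2) themselves, which express $\partial_{tt}u^A$ and $\partial_{tt}r^d$ as linear combinations of spatial derivatives of $(u,r)$ up to order $2$ and of $(\partial_t u,\partial_t r)$, with coefficients $v(F),a^{Aij},\ldots$ controlled in $H^s$. A final application of the product inequality then gives $\|\partial_{tt}u\|_{H^{s-2}}+\|\partial_{tt}r\|_{H^{s-2}}$ in terms of quantities already bounded, together with $|||h^A|||_{s,T}+|||h^d|||_{s,T}$.

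The main obstacle is twofold. The first is the nonlocal integral terms $n^{A1}\int_\Sigma b^A_{(\beta)j}r^j\,d\mu$ and $q^{A1}\int_\Sigma c^A_{(\beta)B}u^B\,d\mu$ entering $N^A$ and $Q^A$: after differentiation the $\partial_i^{s_1}$ falls either on $n^{A1},q^{A1}$ or on the integrand, and because $b,c$ are supported in single charts the integrated quantity is controlled by the local $H^s$ norm of $r,u$, closing the estimate but requiring careful bookkeeping. The second is the coupling between the $u$- and $r$-equations, which are weakly hyperbolic for $r$: one must choose $\lambda$ large enough that the Gronwall-type absorption works simultaneously for both components, and accept that $c_{19}$ absorbs the resulting $e^{\lambda T}$ factor, explaining its dependence on $T$. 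An induction on the derivative order $s$, with Lemma 3.3 providing the base case and each step adding one spatial derivative before recovering time derivatives from the equation, organises the argument cleanly and yields (3.38).
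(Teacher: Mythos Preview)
Your proposal is correct and follows essentially the same route as the paper: induction on $s$ with Lemma~3.3 as base case, application of Lemma~3.3 to the spatially differentiated system (3.32)--(3.33), estimation of the commutator forcings $\tilde h^A_\gamma,\tilde h^d_\gamma$, absorption of the resulting top-order terms by choosing $\lambda$ large, and recovery of the time derivatives from the equations. Your remark about retaining the boundary term at the variable endpoint to pass from the time-integrated bound to the $\sup_t$ bound is in fact a point the paper glosses over in going from its display preceding (3.43) to (3.43) itself, so your write-up is if anything slightly more careful there.
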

\begin{proof}
The proof is based on the induction. For $s=1$, Lemma 3.3 gives the case by choosing a suitable $\lambda$. We assume that (\ref{E2-31}) holds for all $1\leq j\leq s$, and we prove that (\ref{E2-31}) holds for $s+1$. Since (\ref{E2-29-1})-(\ref{E2-30-1}) has the same structure with (\ref{E2-1})-(\ref{E2-2}), the result in Lemma 3.3 can be used directly.
Note that the vector $D^su^A$ and $D^sr^d$ satisfy (\ref{E2-29-1})-(\ref{E2-30-1}). By (\ref{E2-14}), we have
\begin{eqnarray}\label{E2-29}
&&(\lambda-c_{20})\int_0^T\int_{B_1(0)}e^{-\lambda t}[(\partial_i^su^A_t)^2+(\partial_i^{s+1}u^A)^2+(\partial_i^su^A)^2\nonumber\\
&&+(\partial_i^sr_t^d)^2+(\partial_i^sr^d)^2]dxdt\nonumber\\
&\leq&c_{21}\int_{B_1(0)}e^{-\lambda t}[(\partial_i^su^A_0)^2+(\partial_i^{s+1}u^A_0)^2+(\partial_i^{s}u^A_1)^2+(\partial_i^{s+1}u^A_1)^2\nonumber\\
&&+(\partial_i^sr_0^d)^2+(\partial_i^{s+1}r^d_0)^2+(\partial_i^sr_1^d)^2+(\partial_i^{s+1}r^d_1)^2]dx\nonumber\\
&&+c_{22}\sum_{|\gamma|=s}\int_0^T\int_{B_1(0)}e^{-\lambda t}(|h_{\gamma}^A|^2+|\partial_ih^A_{\gamma}|^2+|h^{d}_{\gamma}|^2+|\partial_ih^d_{\gamma}|^2)dxdt.~~~
\end{eqnarray}
By (\ref{E2-26-0}), (\ref{E2-27R0})-(\ref{E2-27R1}), we drive
\begin{eqnarray}\label{E2-29R0}
&&|h_{\gamma}^A|^2+|\partial_ih^A_{\gamma}|^2+|h^{d}_{\gamma}|^2+|\partial_ih^d_{\gamma}|^2\nonumber\\
&\leq&|\partial_i^{s}h^{A}|^2+|\partial_i^{s+1}h^{A}|^2+|\partial_i^{s}h^{d}|^2+|\partial_i^{s+1}h^{d}|^2\nonumber\\
&&+c_{23}(|\partial_i^{s+1}u^A|^2+|\partial_i^{s}u^A|+|\partial_i^{s-1}u_t^A|+|\partial_i^sr^d|+|\partial_i^{s-1}r_t^d|).
\end{eqnarray}
So choosing a $\lambda\geq c_{20}+c_{24}+1$ large enough, it follows from (\ref{E2-29}) and (\ref{E2-29R0}) that
\begin{eqnarray*}
&&(\lambda-c_{20}-c_{24})\int_0^T\int_{B_1(0)}e^{-\lambda t}[(\partial_i^su^A_t)^2+(\partial_i^{s+1}u^A)^2+(\partial_i^su^A)^2\nonumber\\
&&+(\partial_i^sr_t^d)^2+(\partial_i^sr^d)^2]dxdt\nonumber\\
&\leq&c_{21}\int_{B_1(0)}e^{-\lambda t}[(\partial_i^su^A_0)^2+(\partial_i^{s+1}u^A_0)^2+(\partial_i^{s}u^A_1)^2+(\partial_i^{s+1}u^A_1)^2\nonumber\\
&&+(\partial_i^sr_0^d)^2+(\partial_i^{s+1}r^d_0)^2+(\partial_i^sr_1^d)^2+(\partial_i^{s+1}r^d_1)^2]dx\nonumber\\
&&+c_{25}\int_0^T\int_{B_1(0)}e^{-\lambda t}(|\partial_i^{s}h^{A}|^2+|\partial_i^{s+1}h^{A}|^2+|\partial_i^{s}h^{d}|^2+|\partial_i^{s+1}h^{d}|^2)dxdt,
\end{eqnarray*}
which implies that
\begin{eqnarray}\label{E2-30}
&&\|\partial_i^s\partial_tu^A\|_{\textbf{L}^2(B_1(0))}+\|\partial_i^{s+1}u^A\|_{\textbf{L}^2(B_1(0))}+\|\partial_i^s\partial_tr^d\|_{\textbf{L}^2(B_1(0))}+\|\partial_i^{s}r^d\|_{\textbf{L}^2(B_1(0))}\nonumber\\
&\leq& c_{26}(\|u^A_0\|_{\textbf{H}^{s+1}(B_1(0))}+\|u^A_1\|_{\textbf{H}^{s+1}(B_1(0))}\nonumber\\
&&+\|r^d_0\|_{\textbf{H}^{s+1}(B_1(0))}+\|r^d_1\|_{\textbf{H}^{s+1}(B_1(0))}+\|h^A\|_{\textbf{H}^{s+1}(B_1(0))}+\|h^d\|_{\textbf{H}^{s+1}(B_1(0))}).~~~~~~~~~
\end{eqnarray}
Furthermore, we can apply $\partial_t^j\partial_i^{s+1-j}$ to both sides of (\ref{E2-1})-(\ref{E2-2}) for $2\leq j\leq s+1$, then deriving a similar estimate with (\ref{E2-30}). We conclude that
\begin{eqnarray}\label{E2-31}
&&\sum_{j=0}^{s+1}(\|\partial_t^j\partial_i^{s+1-j}u^A\|_{\textbf{L}^2(B_1(0))}+\|\partial_t^j\partial_i^{s+1-j}r^d\|_{\textbf{L}^2(B_1(0))})\nonumber\\
&\leq& c_{27}(\|u^A_0\|_{\textbf{H}^{s+2}(B_1(0))}+\|u^A_1\|_{\textbf{H}^{s+2}(B_1(0))}+\|r^d_0\|_{\textbf{H}^{s+2}(B_1(0))}\nonumber\\
&&+\|r^d_1\|_{\textbf{H}^{s+2}(B_1(0))}+\|h^A\|_{\textbf{H}^{s+2}(B_1(0))}+\|h^d\|_{\textbf{H}^{s+2}(B_1(0))}).
\end{eqnarray}
Summing up the estimate (\ref{E2-31}) over all coordinate charts, and we notice that $\|u\|_s$ is equivalent to $\|u\|_{s}+\|\partial\partial_tu\|_{s}$, hence (\ref{E2-32}) can be derived by (\ref{E2-31}).
This completes the proof.
\end{proof}
Note that $h=u^A\nu_A+r^d\tau_d$ and $W=h^A\nu_A+h^d\tau_d$. So by Lemma 3.5, direct computation gives the following result.
\begin{lemma}
Let $h$ be a smooth solution to weakly linear hyperbolic system (\ref{E2-1})-(\ref{E2-2}) with initial data $h(0)=h_0$ and $\partial_th(0)=h_1$. $F\in\textbf{B}_{R,T}^s$ is given.
Assume that (\ref{E1-2}) and (\ref{E2-5})-(\ref{E2-5-2}) hold.
Then for $s\geq\max\{2,\frac{n}{2}\}$, there holds
\begin{eqnarray}\label{E2-33}
|||h|||_{s,T}&\leq&c_{28}(||h_0||_{s+1,T}+||h_1||_{s+1,T}+|||W|||_{s,T}).
\end{eqnarray}
\end{lemma}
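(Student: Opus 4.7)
The plan is to reduce Lemma 3.6 to Lemma 3.5 by passing between the global object $h$ (and the external force $W$) and its components $(u^A, r^d)$ (resp. $(h^A,h^d)$) in the time-dependent frame $\{\nu_A, \tau_d\}$. Since the subspaces spanned by $\{\nu_A\}$ and $\{\tau_d\}$ are orthogonal and the non-degeneracy hypothesis (3.14) gives $\det(\langle\nu_A,\nu_B\rangle)>c_1$ and $\det(\langle\tau_d,\tau_l\rangle)>c_1$, the fiberwise metric matrices admit smooth inverses $\nu^{AB}$ and $\tau^{dl}$, and we can recover the components via
\begin{equation*}
u^A = \nu^{AB}\langle h,\nu_B\rangle,\qquad r^d = \tau^{dl}\langle h,\tau_l\rangle,
\end{equation*}
and similarly $h^A = \nu^{AB}\langle W,\nu_B\rangle$, $h^d = \tau^{dl}\langle W,\tau_l\rangle$, in each coordinate chart.

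Next, I would use the product inequality, which is available since $s\geq\max\{2,n/2\}$, together with the estimates for $\|\nu^{AB}\|_s$, $\|\tau^{dl}\|_s$ and for $\nu,\tau$ (and their time derivatives up to second order) supplied by Lemma 3.1. This yields on the one hand the direction
\begin{equation*}
\|u_0\|_{s+1}+\|u_1\|_{s+1}+\|r_0\|_{s+1}+\|r_1\|_{s+1}\leq c(\|h_0\|_{s+1}+\|h_1\|_{s+1}),
\end{equation*}
\begin{equation*}
|||h^A|||_{s,T}+|||h^d|||_{s,T}\leq c\,|||W|||_{s,T},
\end{equation*}
and on the other hand the reverse reconstruction
\begin{equation*}
|||h|||_{s,T}\leq c\bigl(|||u|||_{s,T}+|||r|||_{s,T}\bigr),
\end{equation*}
obtained by applying the product rule to $h=u^A\nu_A+r^d\tau_d$ and bounding mixed time-space derivatives of $\nu_A$ and $\tau_d$ through Lemma 3.1.

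Inserting the first two inequalities into the conclusion of Lemma 3.5 and then invoking the reverse reconstruction gives
\begin{equation*}
|||h|||_{s,T}\leq c\bigl(|||u|||_{s,T}+|||r|||_{s,T}\bigr)\leq c_{28}\bigl(\|h_0\|_{s+1}+\|h_1\|_{s+1}+|||W|||_{s,T}\bigr),
\end{equation*}
which is (3.33) after summing over the finite atlas of charts.

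The delicate part will not be the algebraic decomposition but the verification that the product-rule bookkeeping closes at the top order. Because $|||\cdot|||_{s,T}$ controls up to two time derivatives (with a loss in the spatial order), I must ensure that the Leibniz expansions of $\partial_t^i(\nu^{AB}\langle h,\nu_B\rangle)$ distribute derivatives so that the frame factor never carries more derivatives than Lemma 3.1 allows; the constraint $s\geq\max\{2,n/2\}$ is exactly what makes the Sobolev and product inequalities absorb the lower-order terms uniformly on $[0,T]$. Once this bookkeeping is checked chart-by-chart and the dependence of the constant on $R$ and the $c_i$ from (3.13)–(3.14) is tracked, the conclusion follows immediately from Lemma 3.5.
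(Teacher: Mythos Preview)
Your proposal is correct and follows essentially the same approach as the paper: the paper's proof is the single sentence ``Note that $h=u^A\nu_A+r^d\tau_d$ and $W=h^A\nu_A+h^d\tau_d$. So by Lemma 3.5, direct computation gives the following result,'' and what you have written is precisely a careful unpacking of that direct computation via the product inequality and Lemma~3.1. No alternative route or missing idea is involved.
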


\textbf{Proof of Proposition 3.1.} The proof is based on a standard fixed point iteration. Let $h=u^{A}\nu_A+r^d\tau_d$ and $W=h^A\nu_A+h^d\tau_d$. We consider the following approximation system of (\ref{E2-1})-(\ref{E2-2}) as
\begin{eqnarray*}
&&\partial_{tt}u^A_{(m+1)}-v(F)L^Au^A_{(m+1)}-v(F)N^Ar_{(m)}-v(F)Q^Au^A_{(m)}=h^A,\\
&&\partial_{tt}r^d_{(m+1)}-v(F)M^dr_{(m)}-v(F)P^du_{(m)}=h^d,
\end{eqnarray*}
with initial data
\begin{eqnarray*}
u^A_{m+1}(0)&=&u^A_0,~~r^d_{m+1}(0)=r^d_0,\\
\partial_tu^A_{m+1}(0)&=&u^A_1=<h_1,\nu_B(0)>\nu^{AB}(0)-u^C_0<\partial_t\nu_C(0),\nu_B(0)>\nu^{AB}(0)\\
&&-r^d_0<\partial_t\tau_d(0),\nu_B(0)>\nu^{AB}(0),\\
\partial_tr^d_{m+1}(0)&=&r^d_1=<h_1,\tau_l(0)>\tau^{ld}(0)-u^A_0<\partial_t\nu_A(0),\tau_l(0)>\tau^{lk}(0)\\
&&-r^m_0<\partial_t\tau_m(0),\tau_l(0)>\nu^{ld}(0).
\end{eqnarray*}
Above system is a linear wave equation coupled with an ODEs. By a standard fixed point iteration and similar estimates in Lemma 3.5, we can prove that it has a local smooth solution on $[0,T]$. For the existence of solution for general linear wave equation, one can see \cite{Notz1,Notz,Shat,Sogge} for more details.

\section{Proof of Theorem 1.1}\setcounter{equation}{0}

In this section, we construct a solution $F(t,x)$ on $[0,\frac{T}{\sqrt{\varepsilon}})$ of equation (\ref{E1-1}) by a Nash-Moser iteration scheme.
Rescaling in (\ref{E1-1}) amplitude and time as
\begin{eqnarray*}
F(t,x)\mapsto\varepsilon F(\sqrt{\varepsilon}t,x),~~\varepsilon>0,
\end{eqnarray*}
we obtain the following system
\begin{eqnarray}\label{E3-1}
\partial_{tt}F
&=&\varepsilon^{n-1}\frac{d\mu_t}{d\mu}v(\varepsilon F)(-H(\varepsilon F)+\varepsilon\varphi(\varepsilon F) F\nonumber\\
&&-\varepsilon^2\varphi(\varepsilon F)\nabla(\frac{|F|^2}{2})+\frac{\rho}{Vol(\varepsilon F)})\nu,
\end{eqnarray}
with initial data
\begin{eqnarray}\label{E3-2}
F(0,\cdot)=F_0,~~\partial_tF(0,\cdot)=F_1.
\end{eqnarray}
Introduce an auxiliary function
\begin{eqnarray}\label{E3-3}
\overline{F}=F-F_0-tF_1,
\end{eqnarray}
then the initial value problem (\ref{E3-1})-(\ref{E3-2}) is equivalent to
\begin{eqnarray}\label{E3-4}
\partial_{tt}\overline{F}
&=&\varepsilon^{n-1}\frac{d\mu_t}{d\mu}v(\varepsilon (\overline{F}+F_0+tF_1))(-H(\varepsilon (\overline{F}+F_0+tF_1))\nonumber\\
&&+\varepsilon(\overline{F}+F_0+tF_1)\varphi(\varepsilon (\overline{F}+F_0+tF_1))\nonumber\\
&&-\varepsilon^2\varphi(\varepsilon(\overline{F}+F_0+tF_1))\nabla(\frac{|\overline{F}+F_0+tF_1|^2}{2})\nonumber\\
&&+\frac{\rho}{Vol(\varepsilon (\overline{F}+F_0+tF_1))})\nu,
\end{eqnarray}
with zero initial data
\begin{eqnarray}\label{E3-5}
\overline{F}(0,\cdot)=0,~~\partial_t\overline{F}(0,\cdot)=0.
\end{eqnarray}
We define
\begin{eqnarray}\label{E3-4-1}
\mathcal{N}(\overline{F})&:=&\partial_{tt}\overline{F}
-\varepsilon^{n-1}\frac{d\mu_t}{d\mu}v(\varepsilon (\overline{F}+F_0+tF_1))(-H(\varepsilon (\overline{F}+F_0+tF_1))\nonumber\\
&&+\varepsilon(\overline{F}+F_0+tF_1)\varphi(\varepsilon (\overline{F}+F_0+tF_1))\nonumber\\
&&-\varepsilon^2\varphi(\varepsilon(\overline{F}+F_0+tF_1))\nabla(\frac{|\overline{F}+F_0+tF_1|^2}{2})\nonumber\\
&&+\frac{\rho}{Vol(\varepsilon (\overline{F}+F_0+tF_1))})\nu,
\end{eqnarray}
then our target is to construct a $\overline{F}(\sqrt{\varepsilon}t,x)\in\textbf{B}_{R,T}^s$ on the time interval $[0,T]$ such that $\mathcal{N}(\overline{F})=0$. Here $\sqrt{\varepsilon}t\in[0,T]$.
Thus we obtain the existence of smooth solution for (\ref{E3-4}) with initial data $(\ref{E3-5})$.
In fact, we treat the initial value problem (\ref{E3-4})-(\ref{E3-5}) iteratively as a small perturbation of the initial value problem for a weakly linear
hyperbolic system.
Linearizing nonlinear equation (\ref{E3-4}), we obtain
the linearized operator
\begin{eqnarray}\label{E3-6}
&&\partial_{\overline{F}}\mathcal{N}(\overline{F})h:=\partial_{tt}h-\varepsilon^{n-1}\frac{d\mu_t}{d\mu}v(\varepsilon (\overline{F}+F_0+tF_1))[(\triangle<h,\nu>+|\tilde{h}|^2<h,\nu>\nonumber\\
&&-<\nabla H,h^{\top}>)+\varepsilon h\varphi(\varepsilon (\overline{F}+F_0+tF_1)+\varepsilon (\overline{F}+F_0+tF_1)(\partial_{\overline{F}}\varphi)h)\nonumber\\
&&-\varepsilon^2(\partial_{\overline{F}}\varphi)h\nabla(\frac{|\overline{F}+F_0+tF_1|^2}{2})
-\varepsilon^2\varphi(\varepsilon(\overline{F}+F_0+tF_1))\nabla<\overline{F}+F_0+tF_1,h>\nonumber\\
&&+\frac{\rho}{Vol(\varepsilon (\overline{F}+F_0+tF_1))^2}\int_{\Sigma}<h,\nu>d\mu_t]\nu\nonumber\\
&&-\varepsilon^{n-1}\frac{d\mu_t}{d\mu}v(\varepsilon (\overline{F}+F_0+tF_1))[-H(\varepsilon (\overline{F}+F_0+tF_1))\nonumber\\
&&+\varepsilon(\overline{F}+F_0+tF_1)\varphi(\varepsilon (\overline{F}+F_0+tF_1))\nonumber\\
&&-\varepsilon^2\varphi(\varepsilon(\overline{F}+F_0+tF_1))\nabla(\frac{|\overline{F}+F_0+tF_1|^2}{2})\nonumber\\
&&+\frac{\rho}{Vol(\varepsilon (\overline{F}+F_0+tF_1))}](div h^{\top}+<h,\nu>H)\nu\nonumber\\
&&+\varepsilon^{n}\frac{d\mu_t}{d\mu}v(\varepsilon (\overline{F}+F_0+tF_1))[-H(\varepsilon (\overline{F}+F_0+tF_1))\nonumber\\
&&+\varepsilon(\overline{F}+F_0+tF_1)\varphi(\varepsilon (\overline{F}+F_0+tF_1))-\varepsilon^2\varphi(\varepsilon(\overline{F}+F_0+tF_1))\nabla(\frac{|\overline{F}+F_0+tF_1|^2}{2})\nonumber\\
&&+\frac{\rho}{Vol(\varepsilon (\overline{F}+F_0+tF_1))}]\varphi(\varepsilon(\overline{F}+F_0+tF_1))<|\overline{F}+F_0+tF_1|,h>\nu\nonumber\\
&&-\varepsilon^{n-1}\frac{d\mu_t}{d\mu}v(\varepsilon (\overline{F}+F_0+tF_1))[-H(\varepsilon (\overline{F}+F_0+tF_1))\nonumber\\
&&+\varepsilon(\overline{F}+F_0+tF_1)\varphi(\varepsilon (\overline{F}+F_0+tF_1))\nonumber\\
&&-\varepsilon^2\varphi(\varepsilon(\overline{F}+F_0+tF_1))\nabla(\frac{|\overline{F}+F_0+tF_1|^2}{2})\nonumber\\
&&+\frac{\rho}{Vol(\varepsilon (\overline{F}+F_0+tF_1))}](\nabla<h,\nu>-\varepsilon^2<h,\partial_l\overline{F}>\tilde{h}^{dl}\partial_d\overline{F}),
\end{eqnarray}
where $\tilde{h}_{ij}$ denote the second fundamental form, $\partial_{\overline{F}}$ denotes the Fr\'{e}chet derivative to $\overline{F}$ and we use the following relations
\begin{eqnarray*}
&&-\partial_{\overline{F}}H=\triangle<h,\nu>+|\tilde{h}|^2<h,\nu>-<\nabla H,h^{\top}>,\\
&&\partial_{\overline{F}}d\mu_t=(div h^{\top}+<h,\nu>H)d\mu_t,\\
&&\partial_{\overline{F}}\nu=\nabla<h,\nu>-<h,\partial_l\overline{F}>\tilde{h}^{ld}\partial_d\overline{F},\\
&&\partial_{\overline{F}}v(\varepsilon(\overline{F}+F_0+tF_1))=-\varepsilon\varphi(\tilde{s})v(\varepsilon(\overline{F}+F_0+tF_1))<|\overline{F}+F_0+tF_1|,h>.
\end{eqnarray*}
For the nonlinear term, by (\ref{E3-4}) and (\ref{E3-6}), direct computations show that
\begin{eqnarray}\label{E3-7}
\mathcal{R}(h):=\mathcal{N}(\overline{F}+h)-\mathcal{N}(\overline{F})-\partial_{\overline{F}}\mathcal{N}(\overline{F})h.
\end{eqnarray}
Since the exact form of nonlinear term (\ref{E3-7}) is very complicated, here we does not write it down. In fact,
we notice that the solution of (\ref{E3-4}) is to be constructed in $\textbf{B}_{R,T}^s$ and $\overline{F}\in\textbf{B}_{R,T}^s$, so we have
\begin{eqnarray*}
&&|||\overline{F}|||_{s,T}\leq R<1,\\
&&|||h|||_{s,T}^q\leq|||h|||_{s,T}^2,~~\forall q\geq2.
\end{eqnarray*}
Thus we only need to obtain the lowest exponent of $h$ for convenience. Then by the product inequality, we can obtain the following result, i.e. Lemma 4.1.
Note that
\begin{eqnarray*}
&&v(s)=\exp(-\frac{n}{2}\int_1^s\frac{\eta(w)}{w}dw),~~d\mu_t=\sqrt{\det(g_{ij})},\\
&&\nu=\frac{\partial_1F\times\ldots\times\partial_nF}{|\partial_1F\times\ldots\times\partial_nF|},~~Vol(F)=Vol_0+\int_0^t<\partial_tF,\nu>d\mu_tds.
\end{eqnarray*}
Direct computation shows that the lowest exponent of $\overline{F}$ is $n+1$ in (\ref{E3-4}), which is determined by the lowest exponent of $\overline{F}$ in $d\mu_t$. This lowest exponent of $h$ in (\ref{E3-7}) equals to the lowest exponent of $\overline{F}$ in (\ref{E3-4}), i.e. $n+1$.
\begin{lemma}
Let $\overline{F},~h\in\textbf{B}_{R,T}^s$.
For any $s>\max\{2,\frac{n}{2}\}$, there holds
\begin{eqnarray}\label{E3-8}
|||\mathcal{R}(h)|||_{s,T}\leq c_{29}|||h|||_{s+2,T}^{n+1},
\end{eqnarray}
where $c_{29}$ depends on the constant $R$.
\end{lemma}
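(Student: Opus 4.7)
The plan is to establish the tame estimate (\ref{E3-8}) by Taylor-expanding $\mathcal{N}$ around $\overline{F}$, identifying the lowest power of $h$ that survives after subtracting the linearization, and applying the Moser-type product estimate term by term.

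First, I would write out $\mathcal{R}(h) = \mathcal{N}(\overline{F}+h) - \mathcal{N}(\overline{F}) - \partial_{\overline{F}}\mathcal{N}(\overline{F}) h$ using (\ref{E3-4-1}) and (\ref{E3-6}). The right-hand side of (\ref{E3-4}) is a smooth composite of the quantities $d\mu_t/d\mu = \sqrt{\det g_{ij}}$, the unit normal $\nu = (\partial_1F \wedge \cdots \wedge \partial_n F)/|\partial_1F \wedge \cdots \wedge \partial_n F|$, the mean-curvature factor $-H$, the central-force pieces $\varepsilon F\varphi - \varepsilon^2 \varphi\nabla(|F|^2/2)$, and the nonlocal inner-pressure term $\rho/Vol(\varepsilon F)$, all evaluated at $F = \overline{F} + F_0 + tF_1$. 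Each of these is smooth in $\overline{F}$, $\nabla\overline{F}$, $\nabla^2\overline{F}$, so formal Taylor expansion in $h$ produces a sum of multilinear monomials in $h$, $\nabla h$, $\nabla^2 h$ of degree at least $2$, and subtracting $\partial_{\overline{F}}\mathcal{N}(\overline{F})h$ precisely removes the linear-in-$h$ part.

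Second, I would identify the lowest power of $h$ that actually appears in $\mathcal{R}(h)$. Following the observation in the passage preceding the lemma, the polynomial structure of $d\mu_t = \sqrt{\det g_{ij}}$ (which involves $n$-fold products of first derivatives) together with the multiplicative companion factors standing in front of $\nu$ in (\ref{E3-4}) forces every term in the Taylor remainder to carry total multiplicity at least $n+1$ in the perturbation $h$. Then I would estimate each surviving monomial: schematically, every term has the form
\[
\mathcal{C}(\overline{F},F_0,F_1)\cdot\prod_{j=1}^{q}\partial^{\alpha_j} h,\qquad q\ge n+1,\quad |\alpha_j|\le 2,
\]
where the smooth coefficient $\mathcal{C}$ is bounded uniformly for $\overline{F}\in\textbf{B}_{R,T}^s$ thanks to the Sobolev bounds on $\nu$ and $\tau_d$ from Lemma 3.1. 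Iterated application of the product inequality
\[
\|uv\|_{\textbf{H}^s(B_1(0))}\le c_0\|u\|_{\textbf{H}^s(B_1(0))}\|v\|_{\textbf{H}^s(B_1(0))}\qquad(s>\tfrac{n}{2})
\]
controls each monomial by $c\,|||h|||_{s+2,T}^{q}$; the loss of two derivatives reflects the second-order mean-curvature contribution. The smallness $|||h|||_{s+2,T}\le R<1$ then lets me absorb every higher power into the lowest via $|||h|||_{s+2,T}^{q}\le|||h|||_{s+2,T}^{n+1}$ for $q\ge n+1$, and summing the finitely many contributions yields (\ref{E3-8}).

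The main obstacle is the bookkeeping. The operator $\mathcal{N}$ decomposes into a mean-curvature piece, two central-force pieces, a nonlocal inner-pressure piece, and the prefactor $\varepsilon^{n-1}v(\varepsilon F)(d\mu_t/d\mu)\nu$, each of which generates a long list of sub-terms when Taylor expanded. Verifying that every one of these sub-terms carries total $h$-multiplicity at least $n+1$, after the cancellations coming from subtracting $\partial_{\overline{F}}\mathcal{N}(\overline{F})h$, requires careful case-by-case algebra. The nonlocal inner-pressure term $\rho/Vol(\varepsilon F)$ is the most delicate, because the linearization of its reciprocal-volume factor already involves an integral of $\langle h,\nu\rangle$ over $\Sigma$ and the higher-order pieces of its expansion give rise to iterated surface integrals; handling these with the product estimate requires treating the integral as a continuous linear functional on $\textbf{H}^{s}$ and then re-entering the Moser loop.
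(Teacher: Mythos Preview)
Your proposal is correct and follows essentially the same route as the paper: the paper's argument (given in the paragraph immediately preceding Lemma~4.1 rather than after it) is exactly to observe that $|||h|||^q\le|||h|||^{n+1}$ for $q\ge n+1$ since $R<1$, that the lowest exponent of $h$ surviving in $\mathcal{R}(h)$ is $n+1$ because of the factor $d\mu_t=\sqrt{\det g_{ij}}$, and then to invoke the product inequality. Your write-up is in fact more careful than the paper's sketch, in particular your remark about handling the nonlocal term $\rho/Vol(\varepsilon F)$ via iterated surface integrals, which the paper does not comment on.
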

In order to prove the existence of smooth solution for the linearized equation, we can follow \cite{Notz} to decompose $h$ and the external force $W(t,x)$ into the normal and tangential parts.
Then direct computation shows that the linearized equation has the same form as the weakly linear hyperbolic system (\ref{E2-1})-(\ref{E2-2}). So by Proposition 3.1 and Lemma 3.5, we obtain the following result.
\begin{lemma}
Let $\overline{F}\in\textbf{B}_{R,T}^s$. For any $s>\max\{2,\frac{n}{2}\}$, there exists a function $h\in\textbf{B}_{R,T}^s$ such that
\begin{eqnarray*}
&&\partial_{\overline{F}}\mathcal{N}(\overline{F})h=W(t,x),\\
&&h(0,\cdot)=0,~~\partial_th(0,\cdot)=0.
\end{eqnarray*}
Moreover, there holds
\begin{eqnarray}\label{E3-9}
|||h|||_{s,T}&\leq&c_{30}|||W|||_{s,T}.
\end{eqnarray}
\end{lemma}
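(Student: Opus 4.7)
The plan is to reduce the linearized equation $\partial_{\overline{F}}\mathcal{N}(\overline{F})h = W$ to the weakly hyperbolic system (\ref{E2-1})--(\ref{E2-2}) analyzed in Section 3, and then quote Proposition 3.1 together with Lemma 3.6 to produce $h$ and its tame estimate. First, following the framework of Notz used earlier in the paper, I would work in any fixed coordinate chart $(x_\alpha,U_\alpha)$ and decompose both the unknown and the forcing into normal and tangential components relative to the immersion associated with $\overline{F}+F_0+tF_1$:
\begin{eqnarray*}
h = u^A \nu_A + r^d \tau_d, \qquad W = h^A \nu_A + h^d \tau_d.
\end{eqnarray*}
Because $\{\nu_A,\tau_d\}$ form a basis of the fiber satisfying the nondegeneracy assumption (\ref{E2-5-2}) (which is ensured by $\overline{F}\in \textbf{B}_{R,T}^s$), this decomposition is invertible with uniformly bounded inverse, and it is enough to solve for the scalar components $(u^A,r^d)$.

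Next I would inspect the explicit form (\ref{E3-6}) of the linearized operator. The only genuine second-order spatial differential operator acting on $h$ comes from the factor $\triangle\langle h,\nu\rangle$ inside the mean-curvature linearization $-\partial_{\overline{F}}H$, and it is multiplied by the coefficient $\varepsilon^{n-1}(d\mu_t/d\mu)\,v(\varepsilon(\overline{F}+F_0+tF_1))$. Projected along $\nu_A$, this produces the principal part $v(F)a^{Aij}\partial_i\partial_j u^A$ with $a^{Aij}$ proportional to the inverse induced metric $g^{ij}$; the induced metric is uniformly elliptic because $\overline{F}\in \textbf{B}_{R,T}^s$ gives the bounds of Lemma 3.1 and the basis condition (\ref{E2-5-2}) holds. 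All remaining terms in (\ref{E3-6})---the curvature square $|\tilde h|^2\langle h,\nu\rangle$, the drift $\langle\nabla H,h^\top\rangle$, the central-force contributions in $\varphi$, the volume term $\frac{\rho}{\mathrm{Vol}^2}\int_\Sigma\langle h,\nu\rangle d\mu_t$, and the derivatives of $\nu$---are of order $\leq 1$ in $h$ and carry coefficients depending smoothly on $\overline{F}, F_0, F_1, \nu_A, \tau_d$. The volume term is precisely of the nonlocal form $q^{A1}\sum_\beta \int_\Sigma c^A_{(\beta)B} u^B_{(\beta)}d\mu$ appearing in (\ref{E2-4-3}). Tangentially, there is no second-order operator acting on $r^d$, so the projection on $\tau_d$ furnishes only lower-order terms of the types $M^d r$ and $P^d u$ in (\ref{E2-4-4})--(\ref{E2-4-5}), giving the ODE structure of (\ref{E2-2}).

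With the identification of coefficients in hand, I would verify the quantitative hypotheses (\ref{E2-5})--(\ref{E2-5-2}): ellipticity of $a^{Aij}$ comes from the uniform equivalence of $g^{ij}$ with $\delta^{ij}$ on $\textbf{B}_{R,T}^s$; the bound $[L]_s+[\partial_tL]_s+[N]_s+[Q]_s+[M]_s+[P]_s\le c_0$ follows from the Sobolev product inequality, Lemma 3.1 applied to $\nu,\tau$, and the hypothesis (\ref{E1-2}) controlling $v(\varepsilon\cdot)$ and $\varphi(\varepsilon\cdot)$ via (\ref{E2-26-0}); the nondegeneracy of the Gram matrices in (\ref{E2-5-2}) follows from $|||\overline{F}|||_{s,T}\leq R<1$. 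Proposition 3.1 then yields a smooth solution $h$ on $[0,T]\times\Sigma$ with the prescribed zero initial data, and Lemma 3.6 yields the tame estimate $|||h|||_{s,T}\leq c_{28}|||W|||_{s,T}$, which is exactly (\ref{E3-9}).

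The main obstacle will be the careful bookkeeping in the decomposition step: one must check that every term in (\ref{E3-6}) lands in one of the coefficient slots of (\ref{E2-4-1})--(\ref{E2-4-5}) with the right regularity, in particular that the nonlocal volume term fits the prescribed integral form and that the coefficients involving $\varphi(\varepsilon\cdot)$ and its derivative remain bounded under (\ref{E1-2}). Once this structural identification is made, the analytic heart of the argument has already been done in Section 3.
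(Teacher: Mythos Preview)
Your proposal is correct and follows essentially the same route as the paper: decompose $h$ and $W$ into normal and tangential parts along $\nu_A,\tau_d$ (as in Notz), identify the linearized operator (\ref{E3-6}) with the abstract weakly hyperbolic system (\ref{E2-1})--(\ref{E2-2}), and then invoke Proposition 3.1 for existence together with the energy estimate (\ref{E2-33}) for the bound (\ref{E3-9}). The only slip is a numbering one: the estimate you cite as ``Lemma 3.6'' is Lemma 3.5 in the paper's numbering; your argument is in fact more fleshed out than the paper's own sketch, which simply asserts the structural identification and refers to Proposition 3.1 and Lemma 3.5.
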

Next we introduce the smooth truncation function, one can see \cite{Schwartz} for more details.
Let $\Pi_{\theta}\in\textbf{C}^{\infty}(\textbf{R})$ such that
$\Pi_{\theta}=0$ for $\theta\leq0$ and $\Pi_{\theta}\longrightarrow I$ for $\theta\longrightarrow\infty$.
we introduce a family of smooth functions $S(\theta')$ with $S(\theta')=0$ for $\theta'\leq0$ and $S(\theta')=1$ for $\theta'\geq1$.
For $W\in\textbf{C}_T^s$, we define
\begin{eqnarray*}
\Pi_{|x'|-|x|}W(t,x)=S(|x'|-|x|)W(t,x),~~x,x'\in \textbf{R}^n.
\end{eqnarray*}
For $l=0,1,2,\ldots,$, by setting
\begin{eqnarray}\label{E3-10}
|x'|:=N_l=2^l,
\end{eqnarray}
then by the Fourier transform is an isomorphism between $\textbf{L}^2_s$ and $\textbf{H}^s$, it is directly to check that
\begin{eqnarray}\label{E3-11}
&&\|\Pi_{|x'|-|x|}W\|_{\textbf{H}^{s_1}}\leq c_{s_1,s_2}N_l^{s_1-s_2}\|W\|_{\textbf{H}^{s_2}},~~\forall~s_1\geq s_2\geq0,\\
&&\|\Pi_{|x'|-|x|}W-W\|_{\textbf{H}^{s_1}}\leq c_{s_1,s_2}N_l^{s_1-s_2}\|W\|_{\textbf{H}^{s_2}},~~\forall~0\leq s_1\leq s_2.\nonumber
\end{eqnarray}
For convenience, we denote $\Pi_{N_l-|x|}$ by $\Pi_{N_l}$. We approximate system (\ref{E3-4-1}), and get
the following approximation system
\begin{eqnarray}\label{E3-12}
\mathcal{G}_{N_l}(\overline{F})&:=&\partial_{tt}\overline{F}
-\varepsilon^{n-1}\Pi_{N_l}\frac{d\mu_t}{d\mu}v(\varepsilon (\overline{F}+F_0+tF_1))[-H(\varepsilon (\overline{F}+F_0+tF_1))\nonumber\\
&&+\varepsilon(\overline{F}+F_0+tF_1)\varphi(\varepsilon (\overline{F}+F_0+tF_1))\nonumber\\
&&-\varepsilon^2\varphi(\varepsilon(\overline{F}+F_0+tF_1))\nabla(\frac{|\overline{F}+F_0+tF_1|^2}{2})\nonumber\\
&&+\frac{\rho}{Vol(\varepsilon (\overline{F}+F_0+tF_1))}]\nu,
\end{eqnarray}
where $m=0,1,\ldots,l,\ldots$.

The following Lemma is to construct the ``$l$ th'' step approximation solution.
\begin{lemma}
There exists a
solution $h^{l+1}$ of the initial value problem
\begin{eqnarray*}
&&E^{l}+\partial_{\overline{F}}\mathcal{N}(\overline{F})h^{l+1}=0,\\
&&h^{l+1}(0,x)=0,~~\partial_th^{l+1}(0,x)=0,
\end{eqnarray*}
where $E^{l}$ satisfies
\begin{eqnarray*}
E^{l}=\mathcal{G}_{N_{l}}(\overline{F}^l).
\end{eqnarray*}
Moreover, for $s>\max\{2,\frac{n}{2}\}$, it holds
\begin{eqnarray}\label{E3-13}
|||h^{l+1}|||_{s,T}\leq c_{31}|||E^l|||_{s,T}.
\end{eqnarray}
\end{lemma}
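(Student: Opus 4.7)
The plan is to construct $h^{l+1}$ as the unique solution of the linearized Cauchy problem by directly invoking Lemma 4.2 with external force $W = -E^l$. Since $\mathcal{G}_{N_l}(\overline{F}^l)$ is built from the truncation operator $\Pi_{N_l}$ applied to a finite composition of smooth functions of $\overline{F}^l, F_0, F_1$ together with geometric quantities $d\mu_t/d\mu$, $H$, $\varphi$, $v$, $\nu$, $Vol$, the output $E^l$ lies in $\textbf{C}^\infty([0,T]\times\Sigma,\Omega)$ as soon as the iterate $\overline{F}^l$ does (the truncation $\Pi_{N_l}$ is smoothing, and in particular preserves smoothness). Hence $-E^l$ is an admissible external force for Lemma 4.2.

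Next I would verify that the iterate $\overline{F}^l$ satisfies the hypothesis $\overline{F}^l \in \textbf{B}_{R,T}^s$ required by Lemma 4.2, which is the standing assumption of the Nash--Moser scheme at the $l$-th step (this is to be closed separately by the iteration argument in the forthcoming convergence proof). Under this assumption the linearized operator $\partial_{\overline{F}}\mathcal{N}(\overline{F}^l)$, written out explicitly in (\ref{E3-6}), decomposes after projection onto the normal/tangential splitting $h = u^A\nu_A + r^d\tau_d$ into exactly the weakly hyperbolic form (\ref{E2-1})--(\ref{E2-2}); the assumption (\ref{E1-2}) on $\eta$ together with $\overline{F}^l \in \textbf{B}_{R,T}^s$ and Lemma 3.1 guarantees that the coefficients satisfy (\ref{E2-5})--(\ref{E2-5-2}), so Proposition 3.1 applies to yield a unique smooth solution $h^{l+1}$ with vanishing Cauchy data.

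The quantitative estimate follows immediately from the tame inequality (\ref{E3-9}) of Lemma 4.2: applied to $W = -E^l$ it yields
\begin{equation*}
|||h^{l+1}|||_{s,T} \leq c_{30}|||E^l|||_{s,T},
\end{equation*}
which is (\ref{E3-13}) after relabelling the constant $c_{30}$ as $c_{31}$. No further work is needed beyond the previously established linear theory.

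The main obstacle, and the only point that needs more than a reference, is checking that $\partial_{\overline{F}}\mathcal{N}(\overline{F}^l)$ really does fit the template (\ref{E2-1})--(\ref{E2-2}) \emph{with the coefficient bound} (\ref{E2-5-1}) for the particular iterate $\overline{F}^l$ produced by the truncated scheme. This requires tracking how the highest-order pieces of (\ref{E3-6})---the $\triangle\langle h,\nu\rangle$ term (producing $L^A$), the $\nabla\langle h,\nu\rangle$ term and the mean curvature differentiation (producing $N^A, Q^A$), and the volume integral $\int_\Sigma \langle h,\nu\rangle d\mu_t$ (producing the nonlocal terms $n^{A1}, q^{A1}$)---match term-by-term the structure (\ref{E2-4-1})--(\ref{E2-4-5}). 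Once this identification is made and the coefficient norms are controlled by the product and Sobolev inequalities from Section 3 together with Lemma 3.1, Proposition 3.1 closes the argument and yields $h^{l+1}$ together with (\ref{E3-13}).
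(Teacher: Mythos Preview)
Your proposal is correct and takes essentially the same approach as the paper: invoke Lemma 4.2 with external force $W=-E^l$ to produce $h^{l+1}$, and read off the estimate (\ref{E3-13}) directly from (\ref{E3-9}). The paper's proof additionally records the iteration bookkeeping---the definition $\overline{F}^l=\sum_{i=0}^l h^i$, the expansion $\mathcal{G}(\overline{F}^l+h^{l+1})=E^l+\partial_{\overline{F}}\mathcal{N}(\overline{F})h^{l+1}+R(h^{l+1})$, and the recursion $E^{l+1}=R(h^{l+1})$---but this material is preparation for Theorem 4.1 rather than part of the lemma itself; your discussion of the coefficient-structure check is likewise already absorbed into Lemma 4.2, so no extra work is required here.
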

\begin{proof}
Assume that a suitable ''$0$th step'' approximation solution of (\ref{E3-12}) has been chosen, which is $W^0\neq0$. The ``$l$th step'' approximation solution is denoted by
\begin{eqnarray*}
\overline{F}^l=\sum_{i=0}^lh^i.
\end{eqnarray*}
Define
\begin{eqnarray}\label{E3-14}
E^l&=&\partial_{tt}\overline{F}^l
-\varepsilon^{n-1}\Pi_{N_l}\frac{d\mu_t}{d\mu}v(\varepsilon (\overline{F}^l+F_0+tF_1))[-H(\varepsilon (\overline{F}^l+F_0+tF_1))\nonumber\\
&&+\varepsilon(\overline{F}^l+F_0+tF_1)\varphi(\varepsilon (\overline{F}^l+F_0+tF_1))\nonumber\\
&&-\varepsilon^2\varphi(\varepsilon(\overline{F}^l+F_0+tF_1))\nabla(\frac{|\overline{F}^l+F_0+tF_1|^2}{2})\nonumber\\
&&+\frac{\rho}{Vol(\varepsilon (\overline{F}^l+F_0+tF_1))}]\nu
\end{eqnarray}
Then we plan to find the ``$l$th step'' approximation solution $\overline{F}^{l+1}$. By (\ref{E3-12}), we have
\begin{eqnarray}\label{E3-15}
&&\mathcal{G}(\overline{F}^l+h^{l+1})\nonumber\\
&=&\partial_{tt}\overline{F}^l+\partial_{tt}h^{l+1}
-\varepsilon^{n-1}\Pi_{N_{l+1}}\frac{d\mu_t}{d\mu}v(\varepsilon (\overline{F}^l+h^{l+1}+F_0+tF_1))\nonumber\\
&&\times[-H(\varepsilon (\overline{F}^l+h^{l+1}+F_0+tF_1))\nonumber\\
&&+\varepsilon(\overline{F}^l+F_0+tF_1)\varphi(\varepsilon (\overline{F}^l+h^{l+1}+F_0+tF_1))\nonumber\\
&&-\varepsilon^2\varphi(\varepsilon(\overline{F}^l+F_0+tF_1))\nabla(\frac{|\overline{F}^l+h^{l+1}+F_0+tF_1|^2}{2})\nonumber\\
&&+\frac{\rho}{Vol(\varepsilon (\overline{F}^l+h^{l+1}+F_0+tF_1))}]\nu\nonumber\\
&=&E^l+\partial_{\overline{F}}\mathcal{N}(\overline{F})h^{l+1}+R(h^{l+1}),
\end{eqnarray}
where
\begin{eqnarray}\label{E3-16}
R(h^{l+1})=-\varepsilon^{n-1}\Pi_{N_{l+1}}\left(\mathcal{N}(\overline{F}^l+h^{l+1})-\Pi_{N_l}\mathcal{N}(\overline{F}^l)-\partial_{\overline{F}^l}\mathcal{N}(\overline{F}^l)h^{l+1}\right).~~~
\end{eqnarray}
By Lemma 4.2, there exists a solution $h^{l+1}$ of
\begin{eqnarray*}
&&E^l+\partial_{\overline{F}}\mathcal{N}(\overline{F})h^{l+1}=0,\\
&&h^{l+1}(0,\cdot)=0,~~\partial_th^{l+1}(0,\cdot)=0.
\end{eqnarray*}
A similar estimate with (\ref{E3-9}) is derived as
\begin{eqnarray*}
|||h^{l+1}|||_{s,T}\leq c_{31}|||E^l|||_{s,T}.
\end{eqnarray*}
Furthermore, one can know from (\ref{E3-15}) and (\ref{E3-16}) that
\begin{eqnarray}\label{E3-17}
E^{l+1}=R(h^{l+1}).
\end{eqnarray}
\end{proof}

For $\max\{2,\frac{n}{2}\}\leq \bar{s}<s_0\leq s$, set
\begin{eqnarray}\label{E3-18}
&&s_l:=\bar{s}+\frac{s-\bar{s}}{2^l},\\
\label{E3-19}
&&\alpha_{l+1}:=s_l-s_{l+1}=\frac{s-\bar{s}}{2^{l+1}}.
\end{eqnarray}
By (\ref{E3-18})-(\ref{E3-19}), it follows that
\begin{eqnarray*}
s_0>s_1>\ldots>s_l>s_{l+1}>\ldots.
\end{eqnarray*}

\begin{theorem}
Equation (\ref{E3-1}) with initial data (\ref{E3-2})
has a solution
\begin{eqnarray}\label{E3-20}
F=\overline{F}_{\infty}+F_0+F_1t,
\end{eqnarray}
where $\overline{F}_{\infty}$ has the form
\begin{eqnarray*}
\overline{F}_{\infty}=\sum_{i=0}^{\infty}h_i\in\textbf{C}_{T}^{\bar{s}}.
\end{eqnarray*}
\end{theorem}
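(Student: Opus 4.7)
The plan is to run the Nash--Moser iteration whose single step has already been prepared in Lemma 4.3, and to show that the partial sums $\overline{F}^l=\sum_{i=0}^l h^i$ converge to a solution of $\mathcal{N}(\overline{F})=0$ in the low-regularity norm $|||\cdot|||_{\bar s,T}$. The three quantitative ingredients I will use are: (i) the linear solvability estimate (4.13), $|||h^{l+1}|||_{s,T}\le c_{31}|||E^l|||_{s,T}$; (ii) the superlinear remainder estimate (4.8), $|||\mathcal{R}(h)|||_{s,T}\le c_{29}|||h|||_{s+2,T}^{n+1}$, which reflects that the lowest nontrivial order in the Taylor expansion of $\mathcal{N}$ is $n+1$; and (iii) the smoothing inequalities (3.11) for $\Pi_{N_l}$ with $N_l=2^l$, which trade derivatives for powers of $N_l$. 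The scale sequence $s_l=\bar s+(s-\bar s)/2^l$ with gaps $\alpha_{l+1}=(s-\bar s)/2^{l+1}$ from (4.18)--(4.19) is tailored so that the geometric losses from smoothing are absorbed into the superlinear gains from $\mathcal{R}$.

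First I would fix the ``$0$th step'' by choosing $\overline{F}^0$ so that $|||E^0|||_{s_0,T}=\mathcal O(\varepsilon^{n-1})$; the rescaling in (3.1) already loads a factor $\varepsilon^{n-1}$ onto every occurrence of $\mathcal{N}$, so the initial error is small in $\varepsilon$. Combining Lemma 4.3 with the splitting $E^{l+1}=R(h^{l+1})$ from (3.17) and the definition (3.16), one gets an inductive estimate of schematic form
\begin{equation*}
|||E^{l+1}|||_{s_{l+2},T}\le c\,N_l^{\beta}\,|||E^l|||_{s_{l+1},T}^{n+1}+c\,N_l^{-\alpha_{l+1}}|||E^l|||_{s_l,T},
\end{equation*}
where the first term comes from (4.8) applied to $h^{l+1}$ together with (4.13) and the interpolation needed to replace $|||h^{l+1}|||_{s+2,T}$ by $|||h^{l+1}|||_{s_{l+1},T}$ at the cost of a polynomial factor $N_l^{\beta}$, and the second term is the commutator $(\Pi_{N_{l+1}}-\Pi_{N_l})\mathcal{N}(\overline{F}^l)$ controlled by (3.11). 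A standard choice of $N_l=2^l$ together with the superlinear exponent $n+1\ge 2$ then yields the quadratic-type recursion $|||E^{l+1}|||_{s_{l+2},T}\le C_0|||E^l|||_{s_{l+1},T}^{\kappa}$ for some $\kappa>1$, which propagates $|||E^l|||_{s_l,T}\le\eta^{\kappa^l}$ for some $\eta<1$ provided the initial error is small; the smallness is guaranteed by choosing $\varepsilon$ small.

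Feeding this back through (4.13) gives $|||h^{l+1}|||_{\bar s,T}\le c|||E^l|||_{\bar s,T}\le c|||E^l|||_{s_l,T}$, so $\sum_l |||h^l|||_{\bar s,T}<\infty$ and $\{\overline{F}^l\}$ is Cauchy in $\mathbf{C}_T^{\bar s}$ with limit $\overline{F}_{\infty}$. At the same time I must verify that $\overline{F}^l\in\mathbf{B}_{R,T}^{s}$ for every $l$, which is the hypothesis under which Lemmas 4.1--4.2 apply; this follows by arranging $R$ so that $\sum_l |||h^l|||_{s,T}\le R<1$, using interpolation between $s_0$ and $\bar s$ together with the super-exponential decay of $|||E^l|||_{s_l,T}$. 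Once the limit is produced, I pass to the limit in $\mathcal{G}_{N_l}(\overline{F}^l)=E^l\to 0$; since $N_l\to\infty$ the truncation $\Pi_{N_l}$ converges to the identity on any fixed Sobolev space by (3.11), so $\mathcal{N}(\overline{F}_{\infty})=0$ in $\mathbf{C}_T^{\bar s}$, and setting $F=\overline{F}_{\infty}+F_0+tF_1$ as in (3.3) gives the required solution of (3.1)--(3.2).

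The main obstacle is the accounting in the second paragraph: one must calibrate the three parameters (the regularity loss in (4.8), the smoothing indices in (3.11), and the decreasing sequence $s_l$) so that the polynomial factors $N_l^{\beta}$ are swallowed by the superlinear factor $|||E^l|||^{n+1}$ rather than the other way round. The fact that the exponent $n+1$ strictly exceeds one, coupled with the freedom to enlarge the initial regularity $s$ above $\bar s$ and to take $\varepsilon$ small, is precisely what makes the scheme close; the geometric choice $s_l=\bar s+(s-\bar s)/2^l$ in (4.18) is what turns the bookkeeping into a convergent super-exponential cascade.
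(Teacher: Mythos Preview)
Your proposal is correct and follows essentially the same Nash--Moser iteration as the paper: Lemma~4.3 for the linear step, the remainder bound (4.8), and the smoothing (3.11) combine into a super-exponentially converging recursion for $|||E^l|||_{s_l,T}$, whence $\sum_i h^i$ converges in $\mathbf{C}_T^{\bar s}$ and $\mathcal{N}(\overline{F}_\infty)=0$. The paper's execution is only marginally leaner: it does not isolate the commutator $(\Pi_{N_{l+1}}-\Pi_{N_l})\mathcal{N}(\overline{F}^l)$ as you (rightly) do, and instead of stating a one-step recursion it iterates $|||E^{l+1}|||_{s_{l+1},T}\le c\,\varepsilon^{n-1}N_{l+1}^{n+1}|||E^l|||_{s_l,T}^{n+1}$ directly back to level~$0$, obtaining $|||E^{l+1}|||_{s_{l+1},T}\le c\bigl(16\varepsilon|||E^0|||_{s_0,T}\bigr)^{(n+1)^{l+1}}$ and reading the smallness condition on $\varepsilon$ from there.
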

\begin{proof}
The proof is based on the induction. For any $l=0,1,2,\ldots$, we claim that there exists a constant $0<d<1$ such that
\begin{eqnarray}\label{E3-14R0}
&&|||h^{l+1}|||_{s_{l+1},T}<d^{2^{l}}<1,\\
\label{E3-14R1}
&&|||E^{l+1}|||_{s_{l+1}}\leq d^{2^{l+1}},\\
\label{E3-14R2}
&&\overline{F}^{l+1}\in\textbf{B}_{R,T}^{s}.
\end{eqnarray}
We choose a fixed sufficient small $\overline{F}^0>0$ such that
\begin{eqnarray}\label{E3-21}
|||\overline{F}^0|||_{s_0}\ll1,~~|||E^0|||_{s_0}\ll1.
\end{eqnarray}
For the case $l=0$, by (\ref{E3-13}), we have
\begin{eqnarray}\label{E3-22}
|||h^{1}|||_{s_{1},T}\leq c_{31}|||E^0|||_{s_{1},T}\leq c_{31}|||E^0|||_{s_{0},T}<1.
\end{eqnarray}
It follows from (\ref{E3-8}), (\ref{E3-11}), (\ref{E3-16}) and (\ref{E3-17}) that
\begin{eqnarray}
\label{E3-23}
|||E^{1}|||_{s_{1},T}&\leq&|||R(h^{1})|||_{s_{1},T}\nonumber\\
&\leq& c_{32}\varepsilon^{n-1}N_1^{n+1}|||h^1|||_{s_1,T}^{n+1}\nonumber\\
&\leq& c_{33}(2\varepsilon|||E^0|||_{s_{0},T})^{n+1}.
\end{eqnarray}
It is obviously to see that (\ref{E3-22})-(\ref{E3-23}) gives $(\ref{E3-14R0})_{l=0}-(\ref{E3-14R1})_{l=0}$ by choosing suitable small $\varepsilon>0$. So we get $\overline{F}^1\in\textbf{B}_{R,T}^{s_1}$.

Assume that (\ref{E3-14R0})-(\ref{E3-14R2}) holds for $1\leq i\leq l$, i.e.
\begin{eqnarray}\label{E3-24}
&&|||h^{i}|||_{s_{i},T}<1,\nonumber\\
\label{E3-25}
&&|||E^{i}|||_{s_{i}}\leq d^{2^{i}},\\
\label{E3-26}
&&\overline{F}^{i}\in\textbf{B}_{R,T}^{s^i}\nonumber.
\end{eqnarray}
Now we prove that (\ref{E3-14R0})-(\ref{E3-14R2}) holds for $l+1$.
From (\ref{E3-13}) and (\ref{E3-25}), we have
\begin{eqnarray}\label{E3-14}
|||h^{l+1}|||_{s_{l+1},T}\leq c_{31}|||E^l|||_{s_{l},T}<c_{31} d^{2^{i}}<1.
\end{eqnarray}
It follows from (\ref{E3-8}), (\ref{E3-11}), (\ref{E3-13}), (\ref{E3-16}) and (\ref{E3-17}) that
\begin{eqnarray}
\label{E3-27}
|||E^{l+1}|||_{s_{l+1},T}&\leq&|||R(h^{l+1})|||_{s_{l+1},T}\nonumber\\
&\leq&c_{34}\varepsilon^{n-1}N_{l+1}^{n+1}|||h^{l+1}|||_{s_{l+1},T}^{n+1}\nonumber\\
&\leq&c_{34}\varepsilon^{n-1}N_{l+1}^{n+1}|||E^l|||_{s_{l},T}^{n+1}\nonumber\\
&\leq&c_{34}\varepsilon^{n-1}N_{l+1}^{n+1}N_l^{(n+1)^2}|||E^{l-1}|||_{s_{l-1},T}^{(n+1)^2}\nonumber\\
&\leq&\ldots\nonumber\\
&\leq&c_{35}(16\varepsilon|||E^{0}|||_{s_{0},T})^{(n+1)^{l+1}}.
\end{eqnarray}
We can choose a fixed sufficient small $\varepsilon>0$ such that
\begin{eqnarray*}
0<16\varepsilon|||E^{0}|||_{s_{0},T}<1.
\end{eqnarray*}
Thus we conclude that (\ref{E3-14R0})-(\ref{E3-14R1}) holds. Note that $\overline{F}^l=\sum_{i=0}^lh^i$. So (\ref{E3-14R0}) gives (\ref{E3-14R2}).

Therefore, we derive
\begin{eqnarray*}
\lim_{l\longrightarrow\infty}|||E^l|||_{l,T}=0,
\end{eqnarray*}
which implies that system (\ref{E3-4}) with zero initial data has a solution
\begin{eqnarray*}
\overline{F}_{\infty}=\sum_{i=0}^{\infty}h_i\in\textbf{C}_{T}^{\bar{s}}.
\end{eqnarray*}
At last, by (\ref{E3-3}) we obtain the solution of system (\ref{E3-1}) with initial data (\ref{E3-2}) has a solution
\begin{eqnarray*}
F=\overline{F}_{\infty}+F_0+F_1t.
\end{eqnarray*}
\end{proof}

\section{Proof of Theorem 1.2}\setcounter{equation}{0}

This section is to study the stability problem of (\ref{E1-1}) in $\textbf{B}_{R,T}^{s_1}$.
Assume that there exist two solutions $\bar{F}$ and $\tilde{F}$ of (\ref{E3-1}) with corresponding to two different initial data $(\bar{F}_0,\bar{F}_1)$ and $(\tilde{F}_0,\tilde{F}_1)$. Let $F=\bar{F}-\tilde{F}$. Then we have
\begin{eqnarray}\label{E4-1}
\partial_{tt}F&=&\varepsilon^{n-1}\frac{d\mu_t(\bar{F})-d\mu_t(\tilde{F})}{d\mu}v(\varepsilon\bar{F})\nonumber\\
&&\times\left(-H(\varepsilon\bar{F})+\varepsilon\varphi(\varepsilon\bar{F}) \bar{F}-\varepsilon^2\varphi(\varepsilon\bar{F})\nabla(\frac{|\bar{F}|^2}{2})+\frac{\rho}{Vol(\varepsilon\bar{F})}\right)\nu(\bar{F})\nonumber\\
&&+\varepsilon^{n-1}\frac{d\mu_t(\tilde{F})}{d\mu}(v(\varepsilon\bar{F})-v(\varepsilon\tilde{F}))\nonumber\\
&&\times\left(-H(\varepsilon\bar{F})+\varepsilon\varphi(\varepsilon\bar{F}) \bar{F}-\varepsilon^2\varphi(\varepsilon\bar{F})\nabla(\frac{|\bar{F}|^2}{2})+\frac{\rho}{Vol(\varepsilon\bar{F})}\right)\nu(\bar{F})\nonumber\\
&&+\varepsilon^{n-1}\frac{d\mu_t(\tilde{F})}{d\mu}v(\varepsilon\tilde{F})\nonumber\\
&&\times\left(-H(\varepsilon\bar{F})+\varepsilon\varphi(\varepsilon\bar{F}) \bar{F}-\varepsilon^2\varphi(\varepsilon\bar{F})\nabla(\frac{|\bar{F}|^2}{2})+\frac{\rho}{Vol(\varepsilon\bar{F})}\right)(\nu(\bar{F})-\nu(\tilde{F}))\nonumber\\
&&+\varepsilon^{n-1}\frac{d\mu_t(\tilde{F})}{d\mu}v(\varepsilon\tilde{F})(-H(\varepsilon\bar{F})+H(\varepsilon\tilde{F})+\varepsilon(\varphi(\varepsilon\bar{F}) \bar{F}-\varphi(\varepsilon\tilde{F})\tilde{F})\nonumber\\
&&-\varepsilon^2(\varphi(\varepsilon\bar{F})\nabla(\frac{|\bar{F}|^2}{2})-\varphi(\varepsilon\tilde{F})\nabla(\frac{|\tilde{F}|^2}{2}))+\frac{\rho}{Vol(\varepsilon\bar{F})}-\frac{\rho}{Vol(\varepsilon\tilde{F})})\nu(\tilde{F}),
\end{eqnarray}
with initial data
\begin{eqnarray}\label{E4-2}
F_0=\bar{F}_0-\tilde{F}_0,~~F_1=\bar{F}_1-\tilde{F}_1.
\end{eqnarray}
Introduce an auxiliary function
\begin{eqnarray}\label{E4-3}
F^*=F-F_0-tF_1,
\end{eqnarray}
then the initial value problem (\ref{E4-1})-(\ref{E4-2}) is equivalent to a zero initial value problem. For convenience, we denote the right hand side of (\ref{E4-1}) by $\mathcal{N}(F,\bar{F},\tilde{F})$. Then equation (\ref{E4-1}) can be rewritten as
\begin{eqnarray}\label{E4-4}
\partial_{tt}F^*-\mathcal{N}(F^*,\bar{F},\tilde{F})=0.
\end{eqnarray}
We consider the approximation system of (\ref{E4-4}) as
\begin{eqnarray}\label{E4-5}
\mathcal{G}'(F^*):=\partial_{tt}F^*-\Pi_{N_l}\mathcal{N}(0,\bar{F},\tilde{F}).
\end{eqnarray}
Then using the similar computation process with (\ref{E3-15}), we have
\begin{eqnarray}\label{E4-6}
\mathcal{L}_{\varepsilon}(F^*)+E'(t,x)+R(F^*)-E'(t,x)=0,
\end{eqnarray}
where $E'(t,x)$ is a function which does not depends on $F^*$,
\begin{eqnarray}
&&\mathcal{L}_{\varepsilon}(F^*)=\partial_{tt}F^*-\Pi_{N_l}\partial_{F^*}\mathcal{N}(0,\bar{F},\tilde{F})F^*,\nonumber\\
\label{E4-7}
&&R(F^*)=\varepsilon^{n-1}\Pi_{N_l}(\mathcal{N}(F^*,\bar{F},\tilde{F})-\mathcal{N}(F,\bar{F},\tilde{F})-\partial_{F^*}\mathcal{N}(0,\bar{F},\tilde{F})F^*).
\end{eqnarray}
Direct computation shows that $\mathcal{L}_{\varepsilon}(F^*)$ is a weak hyperbolic system, which has the same structure as (\ref{E2-1})-(\ref{E2-2}). So
by Proposition 3.2, there exists a solution $F^*$ of
\begin{eqnarray*}
&&\mathcal{L}_{\varepsilon}(F^*)+E'(t,x)=0,\\
&&F^*_0=0,~~\partial_tF^*_1=0.
\end{eqnarray*}
A similar estimate with (\ref{E3-13}) is derived as
\begin{eqnarray*}
|||F^*|||_{s,T}\leq c_{36}|||E'|||_{s,T}.
\end{eqnarray*}
Since the lowest exponent of $F^*$ in (\ref{E4-7}) is $n+1$, so we have the following result.
\begin{lemma}
Let $\bar{F},~\tilde{F}\in\textbf{B}_{R,T}^s$.
For any $s>\max\{2,\frac{n}{2}\}$, there holds
\begin{eqnarray}\label{E4-8}
|||\mathcal{R}(F^*)|||_{s,T}\leq c_{37}|||F^*|||_{s+2,T}^{n+1},
\end{eqnarray}
where $c_{37}$ depends on the constant $R$.
\end{lemma}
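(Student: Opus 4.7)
The plan is to follow the strategy used for Lemma 4.1 and adapt it to the difference operator $\mathcal{N}(\cdot,\bar{F},\tilde{F})$ appearing on the right-hand side of (4.1). First I would rewrite $R(F^*)$, as defined in (4.7), as the second-order Taylor remainder of the map $F^*\mapsto \mathcal{N}(F^*,\bar{F},\tilde{F})$ at $F^*=0$: since the zeroth-order term $\mathcal{N}(0,\bar{F},\tilde{F})$ plays the role of $E'(t,x)$ in (4.6) and the linear piece $\partial_{F^*}\mathcal{N}(0,\bar{F},\tilde{F})F^*$ sits inside $\mathcal{L}_\varepsilon(F^*)$, what survives in $R(F^*)$ is precisely the ``quadratic and higher'' contribution.

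Next I would expand each geometric quantity depending on $\bar{F}+F^*$ -- the area element $d\mu_t$, the unit normal $\nu$, the mean curvature $H$, the density $v(\varepsilon\cdot)$, the potential $\varphi(\varepsilon\cdot)$, and the volume factor $\mathrm{Vol}(\varepsilon\cdot)^{-1}$ -- in a Taylor series in $F^*$ around $\bar{F}$. After the zeroth- and first-order pieces cancel in (4.7), every remaining term is a product of (i) smooth coefficients depending only on $\bar{F},\tilde{F}$ and hence bounded in $\textbf{H}^s$ uniformly because $\bar{F},\tilde{F}\in\textbf{B}_{R,T}^s$, and (ii) at least two factors built from $F^*$ or its derivatives. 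The estimate then follows from the product inequality $\|uv\|_{\textbf{H}^s}\leq c\|u\|_{\textbf{H}^s}\|v\|_{\textbf{H}^s}$ for $s>n/2$ stated in Section 3, the Sobolev embedding $\textbf{H}^s(B_1(0))\hookrightarrow \textbf{L}^\infty(B_1(0))$, and the boundedness of the smoothing operator $\Pi_{N_l}$ on every Sobolev space.

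The identification of $n+1$ as the lowest power is the crux of the argument, and I would invoke exactly the observation made in the paragraph preceding Lemma 4.1: the lowest power of $F^*$ in the full expansion is dictated by the area element $d\mu_t(\bar{F}+F^*)=\sqrt{\det\langle\partial_i(\bar{F}+F^*),\partial_j(\bar{F}+F^*)\rangle}$, whose polynomial structure in the metric in dimension $n$, combined with the rescaling $F\mapsto\varepsilon F(\sqrt{\varepsilon}t,x)$ responsible for the $\varepsilon^{n-1}$ prefactor, forces a leading order $n+1$ in $F^*$ after subtraction of the constant and linear Taylor pieces. All other factors contribute at equal-or-higher order, and since $|||F^*|||_{s,T}\leq 2R<1$, every $|||F^*|||_{s+2,T}^q$ with $q\geq n+1$ is dominated by $|||F^*|||_{s+2,T}^{n+1}$. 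The two extra spatial derivatives in the right-hand norm reflect the need to control $\partial_{tt}F^*$ inside the $|||\cdot|||_{s,T}$-norm, exactly as in Lemma 4.1.

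The main obstacle will be the bookkeeping rather than any analytic subtlety: $\mathcal{N}(F^*,\bar{F},\tilde{F})$ is a long expression involving five distinct geometric quantities and two background immersions, so care is needed to verify that no term in the Taylor expansion yields a power of $F^*$ smaller than $n+1$ once the cancellations in (4.7) are carried out. As in the proof of Lemma 4.1, I would argue structurally, relying on the scaling and on the metric-determinant mechanism, rather than enumerating all terms explicitly, which the authors themselves decline to do because the exact form of the nonlinear remainder is very complicated.
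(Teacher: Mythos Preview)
Your proposal is correct and follows essentially the same approach as the paper. The paper offers only a one-line justification---``Since the lowest exponent of $F^*$ in (4.7) is $n+1$, so we have the following result''---and your outline simply expands this into the Taylor-remainder / product-inequality argument already sketched in the paragraph preceding Lemma~4.1; the only minor slip is your attribution of the loss of two derivatives to $\partial_{tt}F^*$, whereas it is the mean-curvature term $H$ (second spatial derivatives of $F$) that forces the shift from $s$ to $s+2$.
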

Then by (\ref{E4-6}) and (\ref{E4-8}), we have
\begin{eqnarray*}
|||F^*|||_{s_l,T}&\leq& c_{36}|||E'|||_{s_l,T}\leq c_{36}|||R(F^*)|||_{s_l,T}\nonumber\\
&\leq&c_{38}\varepsilon^{n-1}N_{l}^{n+1}|||F^*|||_{s_l,T}^{n+1}\nonumber\\
&\leq&c_{39}\varepsilon^{n-1}N_{l}^{n+1}N_{l-1}^{(n+1)^2}|||F^*|||_{s_{l-2},T}^{(n+1)^2}\nonumber\\
&\leq&\ldots\nonumber\\
&\leq&c_{40}(8\varepsilon|||F^*|||_{s_{0},T})^{(n+1)^l},
\end{eqnarray*}
which combines with (\ref{E4-3}), we have
\begin{eqnarray*}
|||F|||_{s_l,T}\leq|||F_0|||_{s_l,T}+T|||F_1|||_{s_l,T}+c_{40}(8\varepsilon|||F^*|||_{s_{0},T})^{(n+1)^l}.
\end{eqnarray*}
Thus we obtain
\begin{eqnarray*}
|||\bar{F}-\tilde{F}|||_{s_l,T}\leq|||\bar{F}_0-\tilde{F}_0|||_{s_l,T}+T|||\bar{F}_1-\tilde{F}_1|||_{s_l,T}+c_{40}(8\varepsilon|||F^*|||_{s_{0},T})^{(n+1)^l},
\end{eqnarray*}
which implies the stability result for equation (\ref{E1-1}).

In particularly, if two solutions $\bar{F}$ and $\tilde{F}$ of (\ref{E3-1}) has the same initial data, i.e. $\bar{F}_0=\tilde{F}_0$ and $\bar{F}_1=\tilde{F}_1$. Then we
choose a suitable small $\varepsilon$ such that
\begin{eqnarray*}
0<8\varepsilon|||F^*|||_{s_{0},T}<1.
\end{eqnarray*}
Then we have
\begin{eqnarray*}
\lim_{l\longrightarrow\infty}|||\bar{F}-\tilde{F}|||_{s_l,T}=0,
\end{eqnarray*}
which gives the uniqueness of solution for equation (\ref{E1-1}) with the initial data $F_0$ and $F_1$.
This completes the proof of Theorem 1.2.\\

\section{Further Discussion}\setcounter{equation}{0}
Our main results can be extended to more general case, i.e. $(\mathcal{M},g)$ be an oriented smooth complete $n+1$ dimensional Riemannian manifold. This section will discuss this case shortly.
By Nash's embedding theorem we may embed $(\mathcal{M},g)$ isometrically into an Euclidean space $\textbf{R}^k$ by $f:\mathcal{M}\longrightarrow\textbf{R}^k$ for some large $k$. Then we use the similar method of \cite{Notz,Shat} to derive an extrinsic form of the Euler-Lagrange equation (\ref{E1-1}).

We denote $\Psi_{\mathcal{M}}$ as the closest point projection to $f(\mathcal{M})$, it is defined on a neighborhood
\begin{eqnarray*}
\tilde{\mathcal{M}}=\{x+y|x\in f(\mathcal{M}),~y\in(T_xf(\mathcal{M}))^{\bot},~~|y|<\delta(x)\},
\end{eqnarray*}
of $f(\mathcal{M})$ and is smooth there. Here $\delta(x)$ denotes a positive smooth function on $f(\mathcal{M})$.
Since the second fundamental form $\tilde{h}_{\alpha\beta}$ of $\mathcal{M}$ is normal to $f(\mathcal{M})$ and $f=\Psi_{\mathcal{M}}\circ f$, we get
\begin{eqnarray}\label{E4-1R}
\overline{\partial}_{\alpha}\overline{\partial}_{\beta}f-\overline{\Gamma}_{\alpha\beta}^{\gamma}\overline{\partial}_{\gamma}f
=D_AD_B\Psi_{\mathcal{M}}\overline{\partial}_{\beta}f^A\overline{\partial}_{\alpha}f^B,
\end{eqnarray}
where $D_A$ and $D_B$ denote the derivative in the direction of the canonical basis vector $e_A$ and $e_B$ in $\textbf{R}^k$, respectively.

Let $\tilde{F}:[0,T]\times\Sigma\longrightarrow\mathcal{M}$ and $F:[0,T]\times\Sigma\longrightarrow f(\mathcal{M})$ with $F=f(\tilde{F})$. Note that $f$ is an isometric embedding. Then by (\ref{E4-1R}), direct computation shows that
\begin{eqnarray*}
\overline{\square}_gF&-&\frac{d\mu_t}{d\mu}v(F)\frac{\rho}{vol(F)}\nu(F)-D_AD_B\Psi_{\mathcal{M}}(F)(\partial_tF^A\partial_tF^B-\frac{d\mu_t}{d\mu}v(F)g^{ij}\partial_iF^A\partial_jF^B)\nonumber\\
&-&D_f\frac{d\mu_t}{d\mu}v(F)\varphi(F)(f^{-1}\circ F-\nabla(\frac{|F|^2}{2}))\nu(F)\nonumber\\
&=&D_f(\overline{\nabla}_{\partial_t}\partial_t\tilde{F}-\frac{d\mu_t}{d\mu}v(-H(\tilde{F})+\varphi \tilde{F}-\varphi\nabla(\frac{|\tilde{F}|^2}{2})+\frac{\rho}{Vol(\tilde{F})})\nu(\tilde{F})),
\end{eqnarray*}
where
\begin{eqnarray*}
\overline{\square}_gF=\partial_{tt}F-\frac{d\mu_t}{d\mu}v(F)g^{ij}(\partial_i\partial_jF-\Gamma_{ij}^d\partial_dF).
\end{eqnarray*}
Hence $\tilde{F}$ is a solution of (\ref{E1-1}) if and only if $F$ is the solution of
\begin{eqnarray}\label{E4-2R}
\overline{\square}_gF&-&\frac{d\mu_t}{d\mu}v(F)\frac{\rho}{vol(F)}\nu(F)-D_f\frac{d\mu_t}{d\mu}v(F)\varphi(F)(f^{-1}\circ F-\nabla(\frac{|F|^2}{2}))\nu(F)\nonumber\\
&&-D_AD_B\Psi_{\mathcal{M}}(F)(\partial_tF^A\partial_tF^B-\frac{d\mu_t}{d\mu}v(F)g^{ij}\partial_iF^A\partial_jF^B)=0.
\end{eqnarray}
Next we follow the method of \cite{Notz} to extend equation (\ref{E4-2R}) for $F:[0,T]\times\Sigma\longrightarrow\tilde{\mathcal{M}}\subset\textbf{R}^k$ which do not necessarily map to $f(\mathcal{M})$. Let $\Psi_{\Omega_t^{\bot}}(F)$ be the projection onto the normal space of $\Omega_t=F(t,\Sigma)$. We notice that if $F$ is close enough in $\textbf{C}^1$ to a family of immersions that map to $f(\mathcal{M})$, then $\Psi_{\mathcal{M}}\circ F$ is also a family of immersions and $\nu(\Psi_{\mathcal{M}}\circ F)$ and $v(\Psi_{\mathcal{M}}\circ F)$ is defined. Thus we will deal with the following equation
\begin{eqnarray}\label{E4-3R}
\overline{\square}_gF-\frac{d\mu_t}{d\mu}\widetilde{v}\frac{\rho}{\widetilde{vol}(F)}\widetilde{\nu}&-&\Psi_{\Omega_t^{\bot}}D_AD_B\Psi_{\mathcal{M}}(F)(\partial_tF^A\partial_tF^B-\frac{d\mu_t}{d\mu}\widetilde{v}g^{ij}\partial_iF^A\partial_jF^B)\nonumber\\
&-&\Psi_{\Omega_t^{\bot}}D_f\frac{d\mu_t}{d\mu}\widetilde{v}\widetilde{\varphi}(f^{-1}\circ F-\nabla(\frac{|F|^2}{2}))\widetilde{\nu}=0,
\end{eqnarray}
where $\widetilde{v}=\Psi_{\Omega_t^{\bot}}v(\Psi_{\mathcal{M}}\circ F)$, $\widetilde{\nu}=\Psi_{\Omega_t^{\bot}}\nu(\Psi_{\mathcal{M}}\circ F)$ and $\widetilde{\varphi}=\Psi_{\Omega_t^{\bot}}\varphi(\Psi_{\mathcal{M}}\circ F)$.

The following result shows that if $F$ maps to $f(\mathcal{M})$ initially and the initial velocity is tangent to $f(\mathcal{M})$, then $F$ maps to $f(\mathcal{M})$ for all time. The proof is the same with Lemma 4.3 in \cite{Notz}, so we omit it.
\begin{lemma}
Assume that equation (\ref{E4-3R}) with initial data $F(0,x)\in f(\mathcal{M})$ and $\partial_tF(0,x)\in\textbf{T}_{F(0,x)}f(\mathcal{M})$ has a smooth solution $F$ on $[0,T]\times\Sigma$. Then $F(t,x)\in f(\mathcal{M})$ and $f^{-1}\circ F$ solves equation (\ref{E1-1}) with initial data $f^{-1}\circ F(0)$ and $Df^{-1}(\partial_tF(0))$ on $[0,T]\times\Sigma$.
\end{lemma}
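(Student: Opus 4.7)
The plan is to show that the normal-to-$f(\mathcal{M})$ component of the solution $F$ vanishes for all time by a uniqueness argument for the Cauchy problem of a quasilinear hyperbolic system, and then to read off equation (\ref{E1-1}) from (\ref{E4-3R}) via the identity (\ref{E4-1R}) once $F$ is known to take values in $f(\mathcal{M})$. Concretely, I would introduce the ``defect'' field
\begin{equation*}
N(t,x):=F(t,x)-\Psi_{\mathcal{M}}(F(t,x)),
\end{equation*}
which, by construction, measures the distance of $F(t,x)$ from the embedded submanifold $f(\mathcal{M})\subset\tilde{\mathcal{M}}\subset\textbf{R}^k$. The target is to prove $N\equiv 0$ on $[0,T]\times\Sigma$.

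The first step is to verify the initial conditions for $N$. Since $F(0,x)\in f(\mathcal{M})$, we have $\Psi_{\mathcal{M}}(F(0,x))=F(0,x)$, so $N(0,x)=0$. For the time derivative, recall that at a point $y\in f(\mathcal{M})$ the differential $D\Psi_{\mathcal{M}}(y)$ is the orthogonal projection onto $T_yf(\mathcal{M})$. Since $\partial_tF(0,x)\in T_{F(0,x)}f(\mathcal{M})$, we obtain
\begin{equation*}
\partial_tN(0,x)=\partial_tF(0,x)-D\Psi_{\mathcal{M}}(F(0,x))\,\partial_tF(0,x)=0.
\end{equation*}

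The second step is to derive a closed hyperbolic equation for $N$ by applying the normal-bundle projection $P^{\perp}(F):=I-D\Psi_{\mathcal{M}}(F)$ to (\ref{E4-3R}). The principal part $\overline{\square}_gF$ acts componentwise in the ambient $\textbf{R}^k$, so $P^{\perp}(F)\overline{\square}_gF$ contributes $\overline{\square}_gN$ together with curvature-type commutators involving $D^2\Psi_{\mathcal{M}}(F)$ and first derivatives of $F$; all of these commutators vanish identically when $N=0$ because of the identity (\ref{E4-1R}), which precisely expresses the fact that on $f(\mathcal{M})$ the ambient Hessian $D_AD_B\Psi_{\mathcal{M}}$ encodes the (normal) second fundamental form of $f(\mathcal{M})$. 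The remaining terms in (\ref{E4-3R}) are, by construction, all in the range of $\Psi_{\Omega_t^{\bot}}$ and depend smoothly on $F$; a direct expansion around $\pi(F)=\Psi_{\mathcal{M}}(F)$ shows that each of them can be written as a smooth function of $F$ and its first derivatives times $N$ (plus genuine source terms in the tangent bundle that are killed by $P^{\perp}$). The outcome is a quasilinear system of the form
\begin{equation*}
\overline{\square}_gN=\mathcal{A}(t,x,F,\partial F)\cdot N+\mathcal{B}(t,x,F,\partial F)\cdot\partial N,
\end{equation*}
with smooth coefficients on $[0,T]\times\Sigma$, of the same weakly hyperbolic structure analyzed in Section~3.

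The third step is uniqueness: since $N$ and $\partial_tN$ vanish at $t=0$ and $N$ solves the above homogeneous linear hyperbolic system whose coefficients are controlled by the fixed smooth solution $F$, the energy estimate of Lemma~3.3 (applied with zero external force and zero initial data) gives $N\equiv 0$ on $[0,T]\times\Sigma$. Consequently $F$ takes values in $f(\mathcal{M})$, so $\Psi_{\mathcal{M}}\circ F=F$, the tildes in $\widetilde{v},\widetilde{\nu},\widetilde{\varphi}$ drop out, and (\ref{E4-3R}) reduces to (\ref{E4-2R}). Applying $Df^{-1}$ and using the Gauss relation (\ref{E4-1R}) to convert the ambient Hessian term into Christoffel symbols then shows that $\tilde{F}:=f^{-1}\circ F$ satisfies (\ref{E1-1}) with initial position $f^{-1}\circ F(0,\cdot)$ and initial velocity $Df^{-1}(\partial_tF(0,\cdot))$. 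The main obstacle I expect is the second step: one has to verify carefully that every nonprincipal term in (\ref{E4-3R}) really is proportional to $N$ when $N=0$ (i.e.\ that no tangential source remains after projecting), and this requires simultaneously exploiting (\ref{E4-1R}), the fact that $\Psi_{\Omega_t^{\bot}}$ annihilates tangential vectors to $\Omega_t$, and the coincidence of the normal bundles of $f(\mathcal{M})$ and of $\Omega_t$ along the submanifold.
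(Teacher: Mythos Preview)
The paper gives no proof of this lemma; it simply writes ``the proof is the same with Lemma 4.3 in \cite{Notz}, so we omit it.'' Your outline---introduce the defect $N=F-\Psi_{\mathcal{M}}(F)$, check that $N$ and $\partial_tN$ vanish initially, derive a homogeneous linear hyperbolic system for $N$ from (\ref{E4-3R}), and conclude $N\equiv 0$ by an energy estimate---is precisely the standard argument for statements of this type (cf.\ the wave-map case in Shatah--Struwe \cite{Shat}) and is in substance what Notz carries out.

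Two minor corrections are worth recording. First, the normal bundles of $f(\mathcal{M})$ and of $\Omega_t$ do \emph{not} coincide along the submanifold; when $F$ lands in $f(\mathcal{M})$ one has instead the orthogonal splitting $(T_F\Omega_t)^{\perp}=(T_Ff(\mathcal{M}))^{\perp}\oplus\textbf{R}\,\nu$, and it is only the inclusion $(Tf(\mathcal{M}))^{\perp}\subset(T\Omega_t)^{\perp}$, together with the fact that $\widetilde{\nu}\in T_Ff(\mathcal{M})$ when $N=0$, that you need in order to kill the source terms after applying $P^{\perp}(F)$. Second, for the uniqueness step you need not invoke the weakly hyperbolic machinery of Section~3: since $F$ is a fixed smooth immersion, $\overline{\square}_g$ acting componentwise on $N$ is a genuine strictly hyperbolic linear operator with smooth coefficients on the closed manifold $\Sigma$, and an ordinary energy/Gr\"onwall argument already forces $N\equiv 0$.
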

In what follows, one can use the same process of dealing with the Euclidean case to prove the existence of solution for equation (\ref{E4-3R}). Here we omit the details of proof.\\





\bibliographystyle{elsarticle-num}







\end{document}